\newtheorem{thm}{Theorem}[section]
\newtheorem{prop}[thm]{Proposition}
\newtheorem{lem}[thm]{Lemma}
\newtheorem{cor}[thm]{Corollary}
\theoremstyle{definition}
\newtheorem{definition}[thm]{Definition}
\theoremstyle{definition}
\newtheorem{remark}[thm]{Remark}
\numberwithin{equation}{section}
\newcommand{\R}[1]{{R}_{#1}}
\newcommand{\Rt}[1]{\tilde{{R}}_{#1}}
\newcommand{\ko}{\kappa_0}
\newcommand{\D}{\mathcal{D}}
\newcommand{\hc}{H_\mathbb{C}}
\newcommand{\ac}{A_\mathbb{C}}
\newcommand{\sector}{S}
\def\calS{\mathcal{S}}
\def\calA{\mathcal{A}}
\def\calC{\mathcal{C}}
\def\calD{\mathcal{D}}
\def\bN{{\mathbb N}}
\def\bR{{\mathbb R}}
\def\k0{\kappa_0}
\newcounter{mnote}
\begin{document}

\title[Time analyticity with higher norm estimates for the 2D NSE]{Time analyticity with higher norm estimates for the 2D Navier-Stokes equations}
\date{\today}
\author[C. Foias]{Ciprian Foias$^{1}$}
\address{$^1$Department of Mathematics\\
Texas A\&M University\\ College Station, TX 77843}
\author[M.S. Jolly]{Michael S. Jolly$^{2}$}
\address{$^2$Department of Mathematics\\
Indiana University\\ Bloomington, IN 47405}
\author[R. Lan]{Ruomeng Lan$^{1}$}
\author[R. Rupam]{Rishika Rupam$^{1}$}
\author[Y. Yang]{Yong Yang$^{1}$}
\author[B. Zhang]{Bingsheng Zhang$^{1}$}
\address{$\dagger$ corresponding author}
\email[C. Foias]{foias@math.tamu.edu}
\email[M. S. Jolly$^\dagger$]{msjolly@indiana.edu}
\email[R. Lan] {rlan@math.tamu.edu}
\email[R. Rupam] {rishika@math.tamu.edu}
\email[Y. Yang] {yytamu@math.tamu.edu}
\email[B. Zhang] {bszhang@math.tamu.edu}
\thanks{This work was supported in part by NSF grant numbers DMS-1109638, and DMS-1109784}





\subjclass[2010]{35Q30,76D05,34G20,37L05, 37L25}
\keywords{Navier-Stokes equations, global attractor, analyticity in time}
\begin{abstract}
This paper establishes bounds on norms of all orders for solutions on the global attractor of the 2D Navier-Stokes equations, complexified in time.  Specifically, for periodic
boundary conditions on $[0,L]^2$, and a force $g\in\calD(A^{\frac{\alpha-1}{2}})$, we show there is a fixed strip 
about the real time axis on which a uniform bound $|A^{\alpha}u|< m_\alpha\nu\kappa_0^\alpha$ holds for each $\alpha \in \bN$.   Here $\nu$ is viscosity, $\k0=2\pi/L$ , and
$m_\alpha$ is explicitly given in terms of $g$ and $\alpha$.  We show that if any element in $\calA$ is in $\D(A^\alpha)$,
 then all of $\calA$ is in $\D(A^\alpha)$, and likewise with  $\D(A^\alpha)$ replaced by $C^\infty(\Omega)$.  We demonstrate the universality of this ``all for one, one for all" law
 on the union of a hierarchal set of function classes.  Finally, we treat the question of
 whether the zero solution can be in the global attractor for a nonzero force by
 showing that if this is so, the force must be in a particular function class.
\end{abstract}
\maketitle

\section{Introduction}
It has been known for nearly half a century that all solutions
in the global attractor $\calA$ of the incompressible 2D Navier-Stokes equations (NSE) can be extended to analytic functions in a uniform strip $\calS$ straddling the real axis 
in the complexified time plane (e.g. \cite{Mas,Iooss,FT79,CF89,T95}).  Moreover, these solutions are uniformly bounded in the norm $|A\cdot|$, where
$A$ is the Stokes operator, $|\cdot|=|\cdot|_{L^2(\Omega)}$, and  $\Omega=[0,L]^2$ is the spatial domain, with periodic boundary conditions.  This is so, at least if the external force $g$ in
the equation is complex-analytic in such a strip.    On the other hand, as remarked in \cite{DFJ05},
if $0 \in \calA$, then by inserting $0$ into the NSE,  one sees that $g$ must be in the domain $\D(A)$, so that
in fact the solution is in $\D(A^2)$.  This in turn implies that $g$ is in $\D(A^2)$, and so on by induction. Thus, $g$ must be in $\D(A^\alpha)$ for any $\alpha\in\bN$.  

Analyticity in time has a number of applications.  It allows for the use of the Cauchy integral
formula in deriving bounds.    It offers a route to proving backward 
uniqueness (see \cite{Ts,HT} for compressible flow, and \cite{ESS} for an application in 3D). The width of the strip of analyticity affects the approximation
of the global attractor by various purely algebraic methods \cite{FT94,FJK96,FJL02}.  
 
In this paper we carry out rather intensive estimates during the inductive process 
described in the opening paragraph to
establish uniform bounds in $|A^\alpha\cdot|$ on a strip $\calS$ of
a specific width $\delta$ for all $\alpha\in\bN$.   This implies 
that if $0\in\calA$, then all elements in $\calA$, as well as $g$, are in $C^\infty(\Omega)$.
We show that the bounds in $|A^\alpha\cdot|$ can be sharpened to some extent, by reducing the
width of the strip according to $\delta_{\alpha+1}=\delta_\alpha/2$.
Moreover, we prove the following ``all for one, one for all" law \cite{Dumas}:
regardless of whether $0\in \calA$: if any element in $\calA$ is in $\D(A^\alpha)$,
 then all of $\calA$ is in $\D(A^\alpha)$, and likewise with  $\D(A^\alpha)$ replaced by $C^\infty(\Omega)$.   

It is expected that this law is somewhat universal in that it would hold
for a variety of subsets of $H$, the natural phase space for the NSE (see \eqref{ee31}).
We explore a particular
family of function classes $\calC(\sigma) \subset C^\infty(\Omega)$
for which all functions $u$ satisfy $\sup_{\alpha \in \bN}|A^{\alpha/2} u| \exp(-\sigma\alpha^2/2) < \infty$. Indeed, we show in Section \ref{sec:oneall} that the ``all for one, one for all" law holds for 
 $\bigcup_{\sigma>0}\calC(\sigma)$.  These classes are shown to be truly hierarchal: $\calC(\sigma_1) \subsetneqq \calC(\sigma_2)$ and moreover 
 $\bigcup_{\sigma>0}\calC(\sigma) \subsetneqq C^\infty(\Omega)$.

The distinction of the zero element in $H$ for a non-zero force $g$ is intriguing.
It is clear from the discussion in the opening paragraph that if $g\notin \D(A)$,
then $0 \notin \calA$.   In addition, we have a lower bound on $|u|$ for
$u \in \calA$ in the case $g \in \D(A^\alpha)$
which is valid for forces heavily weighted in the higher Fourier modes (see Theorem 12.2 in
\cite{DFJ05}).  The higher modes in the force must be more heavily weighted as the 
Grashof number $G$ (see \eqref{grashof}) is increased.  Due to its connection to the dissipation length scale, $G$ must be large for a 2D flow to be turbulent \cite{FMT93}.  It is unknown whether there exists any 
nonzero force for which $0 \in \calA$. In this paper we find a particular value 
$\sigma^*$, such that if $0 \in \calA$, then $g\in \calC(\sigma^*)$.  This narrows somewhat
the search for such a special force.

\section{Main results} 
We consider the Navier-Stokes equations (NSEs) on $\Omega=[0, L]^2$
\begin{align*}
\frac{\partial u}{\partial t}-\nu \Delta u + (u \cdot \nabla) u + \nabla p &= F,
\\
\nabla \cdot u &=0, \\
u(x,0) &=u_{0}(x),\\
\int_{\Omega} u dx=0,\qquad \int_{\Omega} F dx &=0,
\end{align*}
where $u:\mathbb{R}^2\rightarrow \mathbb{R}^2$ and $p:\mathbb{R}^2 \rightarrow \mathbb{R}$ are unknown, $\Omega$ -periodic functions, and $\nu>0$ is the kinematic viscosity of the fluid, $L>0$ is the period, $p$ is the pressure, and $F$ is the ``body'' force as in \cite{T95} \cite{CF89} \cite{T97}. We introduce the phase space $H$ as the subspace of $L^2({\Omega})^2$ consisting of the closure of the set of all $\mathbb{R}^2$-valued trigonometric polynomials $v$ such that
\begin{equation}
\label{ee31}
\nabla \cdot v=0 \quad \text{ and }  \quad \int_{\Omega}v(x)dx=0.
\end{equation}
The scalar product in $H$ is taken to be
\begin{equation*}
(u,v)=\int_{\Omega}u(x)\cdot v(x)dx
\end{equation*}
with associated norm $|u|=(u,u)^{\frac{1}{2}}$.

Let $\mathcal{P}:L^2(\Omega)^2 \rightarrow L^2(\Omega)^2$ be the orthogonal projection (called the Helmholtz-Leray projection) with range $H$, and define the Stokes operator as $A=-\mathcal{P}\Delta$ ($=-\Delta$, under periodic boundary conditions), which is positive, self-adjoint with a compact inverse.   As a consequence,  the space $H$ has an orthonormal basis $\{w_j\}_{j=1}^\infty$ of eigenfunctions of $A$, namely,  $A w_j=\lambda_j w_j$, with $0<\lambda _1=\left(2\pi/ L\right)^2
\leq \lambda _2\leq\lambda_3 \le \cdots$ \; (cf. \cite{CF89}, \cite{T97}).  The powers $A^\sigma$ are defined by
\begin{align}\label{powerdef}
A^\sigma v = \sum_{j=1}^\infty \lambda_j^\sigma (v,w_j) w_j\;, \quad \sigma \in \bR\;,
\end{align}
where $(\cdot,\cdot)$ is the $L^2-$scalar product.
The domain of $A^{\sigma}$ is denoted $\mathcal{D}(A^{\sigma})$.

The NSEs can be written as a differential equation (which will be referred to as the NSE) in the real Hilbert space $H$ in the following form
\begin{equation}
 \label{HNSE}
\frac{du}{d t}+\nu Au+B(u,u)=g, \quad u \in H,
\end{equation}
where the bilinear operator $B$ and the driving force $g$ are defined as \begin{equation*}
B(u,v)=\mathcal{P}((u\cdot\nabla)v)  \text{  and  }  g=\mathcal{P}F.
 \end{equation*} 

We recall that the global attractor $\mathcal{A}$ of the NSE is the collection of all elements $u_0$ in $H$ for which there exists a solution $u(t)$ of NSE, for all $t\in\mathbb{R}$, such that $u(0)=u_0$ and $\sup_{t\in \mathbb{R}} |u(t)|<\infty$.

To give another definition of $\mathcal{A}$, we need to recall several concepts. First, as is well-known, for any $u_0, f \in H$, there exists a unique continuous function $u$ from $[0,\infty)$ to $H$ such that $u(0)=u_0$, $u(t)\in \mathcal{D}(A)$, $t\in (0, \infty)$, and $u$ satisfies the NSE for all $t\in (0,\infty)$. Therefore, one can define the map $S(t): H\rightarrow H$ by
\begin{equation*}
S(t)u_0=u(t)
\end{equation*}
where $u(\cdot)$ is as above. Since $S(t_1)S(t_2)=S(t_1+t_2)$, the family $\big\{ S(t)\big\}_{t\geq 0}$ is called the ``solution" semigroup. Furthermore, a compact set $\mathcal{B}$ is called absorbing if for any bounded set $\tilde{\mathcal{B}}\subset{H}$ there is a time $\tilde{t}\geq 0$ such that $S(t)\tilde{\mathcal{B}}\subset \mathcal{B}$ for all $t\geq \tilde{t}$. The attractor can be now defined by the formula 
\begin{equation*}
 \mathcal{A}=\bigcap_{t\geq 0} S(t)\mathcal{B}, \label{attractor}
\end{equation*}
where $\mathcal{B}$ is any absorbing compact subset of $H$.

Let $\hc$ be the complex Hilbert space $H\otimes \mathbb{C} = H + iH$ (see discussion following 
\eqref{defcom} for more details). Similarly, for any linear subspace $D$ of $H$ we denote $D\otimes \mathbb{C}$ by $D_\mathbb{C}$.  For $\delta>0$ we define the strip 
	\begin{equation}
	\label{e21}
		\mathcal{S}(\delta):=\{\zeta\in\mathbb{C}:|\Im(\zeta)|<\delta\}.
	\end{equation}

\begin{thm}
\label{t2}
	If $0\in\mathcal{A}$, then there exists $\delta>0$ and $\Rt{\alpha}\in [0, \infty), \alpha\in \mathbb{N}$, such that  for any solution $u(\cdot)$ in $\mathcal{A}$, the function $A^{\frac{\alpha}{2}}u(\zeta)$ is $\hc$-valued analytic in the strip $\mathcal{S}(\delta)$, and $|A^{\frac{\alpha}{2}}u(\zeta)|\leq \Rt{\alpha}\nu\ko^\alpha$, where $u(\zeta)$ satisfies the NSE with complexified time \eqref{comnseq}.
\end{thm}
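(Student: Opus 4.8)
The plan is to deduce from $0\in\calA$ that the force $g$ is $C^\infty(\Omega)$, and then to propagate higher-order bounds to a \emph{fixed} strip by parabolic smoothing.

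\emph{Step 1 (reduce to smoothness of $g$).} First I would fix the complete bounded trajectory $v$ in $\calA$ with $v(0)=0$. Since $g$ is independent of time it is entire as a function of complexified time, so the classical theory (\cite{FT79,CF89,T95}) gives $\delta_0>0$ such that every trajectory in $\calA$ extends to a $\D(A)_\C$-valued analytic function on $\calS(\delta_0)$, uniformly bounded there in $|A\cdot|$; this already yields $\Rt0,\Rt1,\Rt2<\infty$. Differentiating \eqref{comnseq} $k-1$ times and evaluating at $\zeta=0$, where $v(0)=0$ annihilates every term of $\tfrac{d^{\,k-1}}{d\zeta^{k-1}}B(v,v)\big|_0$ that contains an undifferentiated factor, produces the recursion
\begin{equation*}
v^{(k)}(0)=-\nu A\,v^{(k-1)}(0)-\sum_{i=1}^{k-2}\binom{k-1}{i}B\!\left(v^{(i)}(0),v^{(k-1-i)}(0)\right),\qquad v'(0)=g,
\end{equation*}
so that $v^{(k)}(0)=(-\nu A)^{k-1}g+P_k$, where $P_k$ is a finite sum of (possibly iterated) terms $B(\cdot,\cdot)$ built from $A^\beta g$ with $\beta\le k-2$. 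Because $v^{(k)}(0)\in\D(A)_\C$ for every $k$, an induction on $k$ — using the two-dimensional product bounds for $A^sB(\cdot,\cdot)$ to place each $P_k$ in a suitable $\D(A^s)$ once the lower-order $A$-norms of $g$ are known finite — gives $g\in\D(A^\alpha)$ for all $\alpha$, hence $g\in C^\infty(\Omega)$; Cauchy estimates for $v$ on $\calS(\delta_0)$ make this quantitative (in fact Gevrey, $|A^\beta g|\le C\beta!\rho^\beta$).

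\emph{Step 2 (qualitative smoothness on the strip).} With $g\in C^\infty(\Omega)$, I would invoke interior parabolic regularity for \eqref{comnseq}: restricting $\zeta$ to a horizontal line turns it into a genuine parabolic evolution, and analyticity and smoothing of the Stokes semigroup $e^{-\nu\tau A}$ for $\Re\tau>0$, inserted in the Duhamel representation, show that for every trajectory $v$ in $\calA$ one has $v(\zeta)\in\D(A^{\alpha/2})_\C$ on $\calS(\delta_0)$ and $\zeta\mapsto A^{\alpha/2}v(\zeta)$ is $\hc$-valued analytic there, for each $\alpha$. This secures the analyticity assertion; only the uniform bound remains.

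\emph{Step 3 (uniform bound, induction on $\alpha$).} Assume $|A^{\beta/2}v(\zeta)|\le\Rt\beta\,\nu\ko^\beta$ on $\calS(\delta_0)$ for all $\beta\le\alpha-1$, the cases $\alpha\le2$ being Step 1. For $\zeta_0\in\calS(\delta_0)$ the horizontal segment from $\zeta_1:=\zeta_0-1$ to $\zeta_0$ stays in $\calS(\delta_0)$ and $v$ solves \eqref{comnseq} on it, so
\begin{equation*}
A^{\frac{\alpha}{2}}v(\zeta_0)=A^{\frac{\alpha}{2}}e^{-\nu A}v(\zeta_1)+\int_0^1 A^{\frac{\alpha}{2}}e^{-\nu\tau A}\Big(g-B(v,v)(\zeta_0-\tau)\Big)\,d\tau .
\end{equation*}
The first term is $\le c_\alpha\,\nu^{1-\alpha/2}\Rt0$; the $g$-term is $\le|A^{\alpha/2}g|$ after moving the power onto $g$; and for the bilinear term I would borrow one factor $A^{1/2}$ from the semigroup, $|A^{\alpha/2}e^{-\nu\tau A}w|\le c(\nu\tau)^{-1/2}|A^{(\alpha-1)/2}w|$, and use $|A^{(\alpha-1)/2}B(v,v)|\le c\big(|A^{(\alpha-1)/2}v|\,|A^{(2+\varepsilon)/2}v|+|A^{(1+\varepsilon)/2}v|\,|A^{\alpha/2}v|\big)$, whose factors of order $\le\alpha-1$ are bounded by hypothesis. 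This becomes a Volterra inequality $\phi(\zeta_0)\le D_\alpha+\int_0^1 c_\alpha(\nu\tau)^{-1/2}\phi(\zeta_0-\tau)\,d\tau$ for $\phi=|A^{\alpha/2}v|$ along horizontal lines, with $D_\alpha$ depending only on $\Rt0,\dots,\Rt{\alpha-1}$ and $|A^{\alpha/2}g|$; since $\tau^{-1/2}$ is integrable its iterated kernels are bounded, and together with the finiteness from Step 2 this yields $\sup_{\calS(\delta_0)}|A^{\alpha/2}v|\le\Rt\alpha\,\nu\ko^\alpha<\infty$, uniformly over $\calA$. The width $\delta_0$ is never reduced, so $\delta:=\delta_0$ works; rerunning the last estimate on $\calS(\delta_0/2),\calS(\delta_0/4),\dots$ only sharpens the $\Rt\alpha$.

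\emph{Main obstacle.} Conceptually the crux is Step 1: converting the lone hypothesis $0\in\calA$ into smoothness of $g$ through the vanishing of $v(0)$. Technically the hard part is the bilinear term in Step 3, which at face value demands bounding $|A^{\alpha/2}v|$ by itself with no gain and would force the strip to shrink with $\alpha$; the resolution is to pull a fractional power of $A$ out of the analytic Stokes semigroup so that only the integrable singularity $\tau^{-1/2}$ survives and the Volterra inequality closes on a fixed interval. Extra care is needed at the smallest $\alpha$, where the product estimate above is not yet valid — but those are precisely the orders already covered by the classical $|A\cdot|$ theory.
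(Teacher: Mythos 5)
Your strategy is genuinely different from the paper's. The paper interleaves the regularity of $g$ and of $u$ one half-power of $A$ at a time (Lemma \ref{l71} extracts $g=\frac{du}{d\zeta}|_{\zeta=0}\in\D(A^{\alpha/2})$ from the current level of analyticity of $u$), obtains the next level by pairing \eqref{comnseq} with $A^{\alpha}u$, controls the bilinear term with the Constantin-type estimate of Lemma \ref{l46} (constant $2^{\alpha+3/2}c_A$), and converts the resulting time-integrated bound into a pointwise one via the mean value property of analytic functions over disks and the polygon $abcdef$; the induction runs on the fixed strip $\calS(\delta_3)$, with the Galerkin scheme of the Appendix supplying the a priori finiteness. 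You instead front-load all of the smoothness of $g$ (by repeatedly differentiating \eqref{comnseq} at $\zeta=0$, where $v(0)=0$ kills the undifferentiated factors) and then propagate bounds by Duhamel with the analytic Stokes semigroup, closing with a weakly singular Gronwall inequality. Your Step 1 is a legitimate and arguably slicker route to $g\in C^\infty(\Omega)$ than the paper's interleaving --- though the parenthetical Gevrey claim $|A^\beta g|\le C\beta!\,\rho^\beta$ is stronger than anything the paper obtains (Corollary \ref{t8-2} only places $g$ in $\calC(\tfrac52\ln\beta_3)$, i.e. $|A^{\alpha/2}g|\le Ce^{\sigma\alpha^2/2}$) and would require controlling the compounding of the bilinear corrections $P_k$, which you have not done; fortunately it is not needed for the theorem.

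The genuine gap is in the closure of Step 3. On the attractor the trajectory lives on a bi-infinite horizontal line, so your Volterra inequality $\phi(\zeta_0)\le D_\alpha+\int_0^1 c_\alpha(\nu\tau)^{-1/2}\phi(\zeta_0-\tau)\,d\tau$ has no initial time at which $\phi$ is known, and the two mechanisms you invoke both fail as stated. The iterated-kernel series does not converge here: the $n$-fold convolution of $K(\tau)=c_\alpha(\nu\tau)^{-1/2}\mathbf{1}_{[0,1]}(\tau)$ has total mass $(\int K)^n=(2c_\alpha\nu^{-1/2})^n$, and $c_\alpha$ (which carries the tame-estimate constant and the lower-order bounds $\Rt{1},\dots,\Rt{\alpha-1}$, hence grows with $\alpha$) gives no reason for this mass to be less than one; the Mittag--Leffler summability you are thinking of concerns the mass of $K^{*n}$ on a \emph{fixed} interval anchored at an initial time, which is not available on the two-sided line. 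Direct absorption ($\sup\phi\le D_\alpha+\kappa\sup\phi$) would close only if $\kappa<1$ \emph{and} if $\sup_{\calS(\delta_0)}\phi$ were known finite a priori; but Step 2 gives only pointwise finiteness of $|A^{\alpha/2}v(\zeta)|$, not a uniform bound over the strip and over $\calA$, so there is nothing to absorb into. Both defects are repairable --- shrink the look-back time to $h_\alpha$ with $2c_\alpha(h_\alpha/\nu)^{1/2}<\tfrac12$ (this costs nothing in the $\Im\zeta$ direction, so the strip is preserved), and run the absorption on the Galerkin approximations, where the supremum is finite by construction, before passing to the limit as in the paper's Appendix --- but as written the uniform bound $|A^{\alpha/2}u(\zeta)|\le\Rt{\alpha}\nu\ko^\alpha$ is not established.
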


The proof of Theorem \ref{t2} will provide specific estimates for $\delta$ and $\Rt{\alpha}$, (see Remark \ref{r76}).

\begin{cor}
\label{t1}
	If $0\in\mathcal{A}$, then $\mathcal{A}\bigcup\{g\}\subset C^\infty([0,L]^2)$.
\end{cor}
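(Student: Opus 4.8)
The plan is to extract all the regularity from Theorem~\ref{t2} and then bridge the scale $\{\D(A^{\alpha/2})\}_{\alpha\in\bN}$ to $C^\infty(\Omega)$ with a Sobolev embedding. I would split the argument into the claim for $\calA$ and the claim for $g$.

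For $\calA$, I would start by taking an arbitrary $u_0\in\calA$, write it as $u_0=u(0)$ for a solution $u(\cdot)$ on $\calA$, and apply Theorem~\ref{t2}: for each $\alpha\in\bN$ the function $\zeta\mapsto A^{\alpha/2}u(\zeta)$ is $\hc$-valued analytic on $\calS(\delta)$, hence finite at the real point $\zeta=0$, so $u_0\in\D(A^{\alpha/2})$. Letting $\alpha$ range over $\bN$ gives $\calA\subset\bigcap_{m\in\bN}\D(A^m)$. I would then invoke the standard identification, under periodic boundary conditions, of $\D(A^m)$ with $H^{2m}(\Omega)^2\cap H$ (immediate from \eqref{powerdef} since $A=-\Delta$ and its eigenvalues are comparable to squared wavenumbers), so that $\bigcap_m\D(A^m)\subset\bigcap_{s\ge0}H^s(\Omega)^2$, and conclude via $H^s(\Omega)\hookrightarrow C^k(\Omega)$ for $s>k+1$ (here $\dim\Omega=2$) that $\calA\subset C^\infty([0,L]^2)$.

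For $g$, since $0\in\calA$ I would fix a solution $u(\cdot)$ on $\calA$ with $u(0)=0$. Theorem~\ref{t2}, together with the classical $\hc$-valued analyticity of $u$, shows that $u(\cdot)$ is $\D(A^{\alpha/2})_\C$-valued analytic on $\calS(\delta)$ for every $\alpha$ (the graph of $A^{\alpha/2}$ is a closed subspace of $\hc\times\hc$), hence its derivative $u'(\cdot)$ is $\D(A^{\alpha/2})_\C$-valued as well; equivalently, $A^{\alpha/2}$ being closed, $A^{\alpha/2}u'=\tfrac{d}{d\zeta}(A^{\alpha/2}u)$ is analytic. Thus $u'(0)\in\D(A^{\alpha/2})$ for every $\alpha$. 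Evaluating \eqref{HNSE} at $t=0$ with $Au(0)=0$ and $B(u(0),u(0))=0$ gives $g=u'(0)$, so $g\in\bigcap_\alpha\D(A^{\alpha/2})\subset C^\infty([0,L]^2)$ by the same embedding. Together this gives $\calA\cup\{g\}\subset C^\infty([0,L]^2)$.

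I do not expect a genuine obstacle here: once Theorem~\ref{t2} is available the corollary is essentially formal. The only points needing (routine) care are the identification of the Stokes domains with periodic Sobolev spaces and the legitimacy of differentiating the $\hc$-valued analytic function $A^{\alpha/2}u$ term-by-term, both standard.
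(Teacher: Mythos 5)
Your proof is correct and follows essentially the same route as the paper: Theorem~\ref{t2} gives $\mathcal{A}\subset\bigcap_\alpha\D(A^{\alpha/2})=C^\infty([0,L]^2)\cap H$, and for $g$ the paper's Lemma~\ref{l71} (and Corollary~\ref{cor51}) likewise evaluates the NSE at $t=0$ with $u(0)=0$ to get $g=\frac{du}{d\zeta}\big|_{\zeta=0}\in\D(A^{\alpha/2})$ for every $\alpha$, via the Cauchy integral formula rather than your (equivalent) closed-operator argument. The identification $\bigcap_\alpha\D(A^{\alpha/2})=H\cap C^\infty$ that you obtain from Sobolev embedding is the same one the paper states explicitly before Theorem~\ref{t24}.
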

The next result does not assume $0 \in \calA$, but rather, that $g$ is smooth to a certain extent.
\begin{prop}
\label{p23}
	Assume that $g\in\D(A^{\frac{\alpha-1}{2}})$ for some $\alpha\in\mathbb{N}$. Then $\mathcal{A}\subset\D(A^{\frac{\alpha+1}{2}})$ and any solution $u(\cdot)$ in $\mathcal{A}$ can be extended in the strip $\mathcal{S}(\delta_\alpha)$,	
where $\delta_{\alpha}>0$ depends on $g$ and $\alpha$, to a $\D(A^{\frac{\alpha+1}{2}})_\mathbb{C}-$ valued analytic function such that
	\begin{equation*}
		\sup\{|A^\frac{\alpha}{2} u(\zeta)|: \zeta\in \mathcal{S}(\delta_{\alpha})\} \leq m_{\alpha}\nu\ko^{\alpha}
	\end{equation*}
	where $m_{\alpha}$ is a non-dimensional parameter which, along with $\delta_\alpha$, depends only on $g$ and $\alpha$.
\end{prop}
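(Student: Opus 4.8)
The plan is to argue by induction on $\alpha$, combining a spatial‑regularity bootstrap on $\calA$ with a Foias--Temam‑type energy estimate for the NSE with complexified time \eqref{comnseq}. When $\alpha=1$ the hypothesis is merely $g\in H=\D(A^0)$, and the role of the induction hypothesis is played by the classical bounds $\sup_{\calA}|u|\le\nu G$ and $\sup_{\calA}|A^{1/2}u|\le c\,\nu\ko G$, with $G$ the Grashof number (see \eqref{grashof}); granting these, Steps 1--2 below apply verbatim, and one only records that every constant they produce depends on $g$ solely through $G$ (together with $\nu,\ko$). So assume $\alpha\ge 2$ and that the statement holds for $\alpha-1$; since $g\in\D(A^{\frac{\alpha-1}{2}})\subset\D(A^{\frac{\alpha-2}{2}})$, the induction hypothesis gives $\calA\subset\D(A^{\alpha/2})$ and, restricting the level‑$(\alpha-1)$ bound to the real axis, uniform bounds on $|A^{j/2}u|$ over $\calA$ for every $j\le\alpha-1$.

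Step 1 (uniform spatial bound on $\calA$). I would first promote $\calA\subset\D(A^{\alpha/2})$ to a \emph{uniform} bound $\sup_{u\in\calA}|A^{\alpha/2}u|\le\rho$ with $\rho$ depending only on $g$ and $\alpha$. Pairing the NSE along a complete trajectory $u(\cdot)$ in $\calA$ with $A^\alpha u$ gives
\[
\tfrac12\tfrac{d}{dt}|A^{\alpha/2}u|^2+\nu|A^{(\alpha+1)/2}u|^2=-(B(u,u),A^\alpha u)+(A^{\frac{\alpha-1}{2}}g,A^{\frac{\alpha+1}{2}}u).
\]
The force term is $\le\tfrac\nu4|A^{(\alpha+1)/2}u|^2+C\nu^{-1}|A^{\frac{\alpha-1}{2}}g|^2$ (here the hypothesis $g\in\D(A^{\frac{\alpha-1}{2}})$ is exactly what is needed), and $|(B(u,u),A^\alpha u)|$ is estimated by a product of intermediate norms via a standard 2D interpolation inequality valid under periodic boundary conditions — for $\alpha=1$ this is the enstrophy identity $(B(u,u),Au)=0$ — and then split by Young's inequality into $\tfrac\nu4|A^{(\alpha+1)/2}u|^2$ plus a remainder involving only norms $|A^{j/2}u|$ with $j<\alpha$, which are already controlled on $\calA$. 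Using $|A^{(\alpha+1)/2}u|^2\ge\ko^2|A^{\alpha/2}u|^2$, and noting that the level‑$(\alpha-1)$ estimate also makes $\int_t^{t+1}|A^{\alpha/2}u|^2\,ds$ bounded uniformly over $\calA$, the uniform Gronwall lemma yields the bound $\rho$; moreover $\int_t^{t+1}|A^{(\alpha+1)/2}u|^2\,ds$ is bounded on $\calA$, so a standard time‑weight argument (or the parabolic smoothing of $S(\tau)$, $\tau>0$, applied to $u(t)=S(\tau)u(t-\tau)$) upgrades this to $\calA\subset\D(A^{(\alpha+1)/2})$.

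Step 2 (complex time). Fix $u_0\in\calA$ and, for $|\theta|<\pi/4$, seek a solution of \eqref{comnseq} along the ray $\zeta=\xi e^{i\theta}$, so that $\tfrac{d}{d\xi}u=e^{i\theta}(-\nu Au-B(u,u)+g)$. Taking the $\hc$‑inner product with $A^\alpha u$ and real parts gives
\[
\tfrac12\tfrac{d}{d\xi}|A^{\alpha/2}u|^2+\nu\cos\theta\,|A^{(\alpha+1)/2}u|^2=\operatorname{Re}e^{i\theta}\bigl(-(B(u,u),A^\alpha u)+(A^{\frac{\alpha-1}{2}}g,A^{\frac{\alpha+1}{2}}u)\bigr).
\]
Estimating the right side as in Step 1 and absorbing it into $\tfrac12\nu\cos\theta\,|A^{(\alpha+1)/2}u|^2$ (using $\cos\theta>\tfrac1{\sqrt2}$) produces a Riccati‑type inequality for $y(\xi)=|A^{\alpha/2}u(\xi)|^2$ whose solution stays bounded on an interval $[0,T]$, with $T>0$ and the bound depending only on $y(0)\le\rho^2$, on $|A^{\frac{\alpha-1}{2}}g|$, and on $\nu,\ko,\alpha$ — hence independent of the choice of $u_0\in\calA$. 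The dissipation term keeps $\int_0^T|A^{(\alpha+1)/2}u|^2\,d\xi<\infty$, so (after the same smoothing step) $u(\zeta)\in\D(A^{(\alpha+1)/2})_{\C}$ for every $\zeta$ in the corresponding sector, and Galerkin approximation with these uniform bounds gives analytic dependence on $\zeta$.

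Step 3 (patching) and main obstacle. By invariance of $\calA$ the construction applies with $u_0$ replaced by $u(t_0)$ for \emph{every} $t_0\in\bR$; each local extension restricts, for real $\zeta$, to the trajectory $u(\cdot)$ in $\calA$ by uniqueness of the real Cauchy problem, so by the identity theorem for $\hc$‑valued analytic functions these extensions agree on overlaps and glue to a single $\D(A^{(\alpha+1)/2})_{\C}$‑valued analytic function on the strip $\calS(\delta_\alpha)$ with $\delta_\alpha:=T/\sqrt2$, on which $|A^{\alpha/2}u(\zeta)|\le m_\alpha\nu\ko^\alpha$; since $T$ and the Riccati bound were uniform over $t_0$, both $\delta_\alpha$ and $m_\alpha$ depend only on $g$ and $\alpha$. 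The main obstacle I anticipate is entirely quantitative: carrying the dependence ``only on $g$ and $\alpha$'' honestly through the bootstrap and the Riccati inequality — keeping every constant explicit and uniform over $\calA$ — and selecting the 2D estimate for $(B(u,u),A^\alpha u)$ so that the nonlinearity is genuinely dominated by the dissipation for all $\alpha\in\bN$; these are the ``rather intensive estimates'' promised in the introduction.
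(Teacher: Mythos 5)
Your proposal is correct and follows essentially the same route as the paper: an induction on $\alpha$ in which the uniform real-time bound on $|A^{\alpha/2}u|$ over $\calA$ serves as initial data for a complex-time a priori estimate along rays $t_0+\rho e^{i\theta}$, $|\theta|\le\pi/4$, obtained by pairing with $A^{\alpha}u$, absorbing the nonlinearity into the dissipation via Young's inequality, closing with a Gronwall/Riccati inequality, justifying existence and analyticity by Galerkin approximation, and patching the sectors over all $t_0\in\bR$ into the strip $\calS(\delta_\alpha)$. The only minor imprecision is that for $\alpha\ge 2$ the remainder after Young's inequality necessarily still contains $|A^{\alpha/2}u|^2$ itself (not only norms of order $j<\alpha$), so the resulting inequality is linear in $\phi=|A^{\alpha/2}u|^2$ rather than purely lower-order, which is exactly what the paper's Lemma \ref{l46} produces and which your uniform-Gronwall step already accommodates.
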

The utility of the above proposition lies in the explicit estimates for the coefficients $\delta_\alpha$ and $m_{\alpha}$. Its proof is given in Section \ref{proof23sec}.

In fact, by using the techniques in the proof of Theorem \ref{t2} and Proposition \ref{p23} we will obtain the following generalization of Theorem \ref{t2}
\begin{thm}
\label{t24}
	If $\mathcal{A}\cap C^\infty([0,L]^2) \neq \emptyset$, then $ \mathcal{A}\bigcup\{g\}\subset C^\infty([0,L]^2)$.
\end{thm}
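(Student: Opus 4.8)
The plan is to reduce Theorem \ref{t24} to Proposition \ref{p23} by first extracting, from the hypothesis $\calA\cap C^\infty(\Omega)\ne\emptyset$, the conclusion that $g$ itself must be smooth, and then invoking the already-established propagation of smoothness to all of $\calA$. Concretely, suppose $u_0\in\calA\cap C^\infty(\Omega)$ and let $u(\cdot)$ be the complete bounded trajectory through $u_0$ (which exists by definition of $\calA$). Since $u(0)=u_0\in\D(A^{(\alpha+1)/2})$ for every $\alpha$, the NSE \eqref{HNSE} read at $t=0$ gives
\begin{equation*}
g = \frac{du}{dt}\Big|_{t=0}+\nu A u_0 + B(u_0,u_0).
\end{equation*}
The first step is therefore to show that each term on the right lies in every $\D(A^{\beta/2})$. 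For $\nu A u_0$ this is immediate from $u_0\in C^\infty$. For the nonlinear term $B(u_0,u_0)$ one uses the standard product estimates (Agmon/Sobolev-type inequalities available in the periodic setting) to see that $B$ maps $C^\infty(\Omega)\times C^\infty(\Omega)$ into $C^\infty(\Omega)$; this is the routine part. The genuinely substantive point is to control $du/dt|_{t=0}$ in high norms: one wants to know that $t\mapsto A^{\beta/2}u(t)$ is differentiable at $t=0$ with values in $H_\C$, which in turn requires knowing a priori that $u(\cdot)$ is smooth in time with values in high-order spaces near $t=0$.

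The second, and I expect main, step is to bootstrap this time regularity. The clean way is to mimic the proof of Proposition \ref{p23} but run it ``one level at a time'' starting from the known membership $u_0\in\D(A^{(\alpha+1)/2})$ for a single $\alpha$. Since $u(\cdot)$ is a bounded complete trajectory, for any fixed $\alpha$ we already have (by Proposition \ref{p23} applied with the crude a priori bound, or by the standard parabolic smoothing for the 2D NSE) that $\calA\subset\D(A)$ and that solutions are analytic in time into $\D(A)_\C$ on a fixed strip; the issue is upgrading the target space. One argues inductively: assuming $\calA\subset\D(A^{\beta/2})$ and that trajectories extend analytically into $\D(A^{\beta/2})_\C$ on some strip $\calS(\delta_\beta)$, differentiate the NSE, apply $A^{(\beta-1)/2}$, and use the 2D bilinear estimates together with a Gronwall/Cauchy-integral argument — exactly the machinery already developed for Theorem \ref{t2} and Proposition \ref{p23} — to obtain $\calA\subset\D(A^{(\beta+1)/2})$ and analyticity into $\D(A^{(\beta+1)/2})_\C$ on a (possibly narrower) strip, \emph{provided} $g\in\D(A^{(\beta-1)/2})$. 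Here the hypothesis feeds back: since $u_0\in\D(A^{(\alpha+1)/2})$, the base case of the induction delivers $g\in\D(A^{(\alpha-1)/2})$ by the displayed identity above with $\beta=\alpha-1$, and then Proposition \ref{p23} itself propagates this one notch to $g\in\D(A^{\alpha/2})$, hence $\calA\subset\D(A^{(\alpha+2)/2})$, hence $g\in\D(A^{(\alpha+1)/2})$, and so on.

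So the logical skeleton is the following loop, to be made rigorous: (i) $u_0\in\D(A^{(\alpha+1)/2})$ for all $\alpha$ (the hypothesis, since $u_0\in C^\infty$); (ii) regularity of $u(\cdot)$ in time near $0$ — obtained by the bootstrap of the previous paragraph — shows $du/dt|_{t=0}\in\D(A^{\beta/2})$ for every $\beta$; (iii) the evaluated NSE then gives $g\in\D(A^{\beta/2})$ for every $\beta$, i.e. $g\in C^\infty(\Omega)$; (iv) with $g$ now known to be smooth, Proposition \ref{p23} applies with $\alpha$ arbitrarily large and yields $\calA\subset\D(A^{(\alpha+1)/2})$ for every $\alpha$, i.e. $\calA\subset C^\infty(\Omega)$; combining with (iii) gives $\calA\cup\{g\}\subset C^\infty(\Omega)$, which is the assertion.

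The main obstacle is step (ii): justifying that the single hypothesis $u_0\in C^\infty$ at one point of the trajectory forces enough joint space-time regularity of $u(\cdot)$ near $t=0$ to differentiate in $t$ with values in arbitrarily high Sobolev spaces. This is where one cannot merely quote Proposition \ref{p23} verbatim — that proposition assumes smoothness of $g$, which is precisely what we are trying to prove — so one must instead re-run its estimates (the 2D bilinear bounds, the energy-type inequalities for $A^{\beta/2}u$, and the complex-time Cauchy estimates) in an inductive scheme in which $g\in\D(A^{(\beta-1)/2})$ at stage $\beta$ is available only because it was extracted at stage $\beta-2$ from the evaluated equation. The induction closes because at each stage the gain in smoothness of $u$ (two notches, via the equation plus one application of Proposition \ref{p23}) outpaces the smoothness of $g$ required to take the next step. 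The remaining ingredients — product estimates for $B$ in $C^\infty$, real-analyticity of finite-energy 2D solutions in time, and the fixed-strip analyticity already in hand — are standard and will be cited rather than reproved.
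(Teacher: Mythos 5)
Your proposal is correct and follows essentially the same route as the paper: the paper packages your bootstrap as Proposition \ref{p91} ($\mathcal{A}\cap\D(A^{\alpha/2})\neq\emptyset\Rightarrow\mathcal{A}\subset\D(A^{\alpha/2})$, with $g\in\D(A^{(\alpha-2)/2})$), proved by induction on $\alpha$ via exactly your loop --- evaluate the NSE at the smooth point to write $g=\frac{du}{d\zeta}\big|_{\zeta=0}+\nu Au^0+B(u^0,u^0)$, control $B(u^0,u^0)$ by the bilinear estimate of Lemma \ref{l75}, control the time derivative through the analytic extension supplied by Proposition \ref{p23}, and feed the resulting regularity of $g$ back into Proposition \ref{p23}. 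Theorem \ref{t24} is then deduced from $H\cap C^\infty([0,L]^2;\mathbb{R}^2)=\bigcap_{\alpha\in\mathbb{N}}\D(A^{\alpha/2})$, just as in your step (iv).
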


\section{Preliminary material}
Under periodic boundary conditions, we may express an element $u \in H $ as a Fourier series expansion
\begin{equation*}
u(x)=\sum_{k \in \mathbb{Z}^2\setminus \{0\}}\hat{u}(k)e^{i\kappa_{0}k\cdot x},
\end{equation*}
where
$\kappa_{0}
=2\pi/L$, $\hat{u}(0)=0$, $(\hat{u}(k))^{\ast}=\hat{u}(-k)$ and $k\cdot\hat{u}(k)=0.$
Parseval's identity reads as
\begin{equation*}
|u|^2=L^2\sum_{k\in\mathbb{Z}^2\setminus \{0\}}\hat{u}(k)\cdot \hat{u}(-k)=L^2\sum_{k\in\mathbb{Z}^2\setminus \{0\}}|\hat{u}(k)|^2,
\end{equation*}
as well as
\begin{equation*}
(u,v)=L^2\sum_{k\in\mathbb{Z}^2\setminus \{0\}}\hat{u}(k)\cdot \hat{v}(-k).
\end{equation*}

The following inequalities will be repeatedly used in this paper
\begin{align}
\kappa_0|u| &\leq |A^{\frac{1}{2}}u|, \quad\mbox{for $u\in\D(A^{\frac{1}{2}})$,}\label{poincare} \\
|u|_{L^4(\Omega)}&\leq c_{L}|u|^{\frac{1}{2}}|A^{\frac{1}{2}}u|^{\frac{1}{2}}, \quad\mbox{for $u\in\D(A^{\frac{1}{2}})$,}\label{ladyzh}\\
|u|_{\infty}&\leq c_{A}|u|^{\frac{1}{2}}|Au|^{\frac{1}{2}}, \quad\mbox{for $u\in\D(A)$}. \label{agmon}
\end{align}
known respectively as the Poincar\'{e}, Ladyzhenskaya  and Agmon inequalities. Both $c_L$ and $c_A$ are absolute constants. By Theorem 9.2 and 9.3 in \cite{FJY12}, we have that
\begin{align*}
c_L&\leq \big(\frac{1}{(2\pi)^2}+\frac{1}{\sqrt{2}\pi}+2\big)^{\frac{1}{4}},\\
c_A&\leq \big( \frac{1}{(2\pi)^2}+\frac{1}{\sqrt{2}\pi}+2+4\sqrt{2}\big)^{\frac{1}{2}}.
\end{align*}
In fact, (\ref{agmon}) can be given in the following stronger form

We stress that our estimates will depend on the generalized Grashof number
\begin{equation}
\label{grashof}
G=\frac{|g|}{{\nu}^2 \kappa_0^2}. 
\end{equation}

We also recall that if $G<c_L^{-2}$ then $\mathcal{A}$ contains only one point $u_0\in\D(A)$ which satisfies 
\begin{equation*}
	\nu Au_0 + B(u_0, u_0) = g
\end{equation*}
(see Proposition 2.1 in \cite{DFJ05}). Note that in this case both Theorem \ref{t1} and \ref{t2} are trivially valid. Therefore, throughout this paper we will assume that $G$ satisfies 
\begin{equation}
\label{e314}
	G\geq \frac{1}{c_L^2}.
\end{equation}


We recall the following algebraic  properties of the bilinear operator $B(u,v)$ where $u, v,w\in \mathcal{D}(A)$ from \cite{DFJ05}
\begin{align}
& (B(u,v),w)=-(B(u,w),v), \label{biop1} \\
& (B(u,u),Au)=0, \label{biop2} \\
& (B(Av,v),u)=(B(u,v),Av), \label{biop3} \\
& (B(u,v),Av)+(B(v,u),Av)+(B(v,v),Au)=0. \label{biop4}
\end{align}
From (\ref{biop3}) and (\ref{biop4}), it easily follows that if $u\in\mathcal{D}(A^{3/2})$ then $B(u,u)\in \mathcal D(A)$ and
\begin{equation} \label{abiop}
AB(u,u)=B(u,Au)-B(Au,u).
\end{equation}

If we multiply (\ref{HNSE}) by $u$ and $Au$, respectively, integrate over $\Omega$, and apply the relations (\ref{biop1}) and (\ref{biop2}), then we have the following inequalities
\begin{equation}
\frac{1}{2} \frac{d}{dt} |u|^2+\nu\ko^2 |u|^2\leq\frac{1}{2} \frac{d}{dt} |u|^2+\nu |A^{\frac{1}{2}}u|^2=(g,u)\leq \frac{|g|^2}{2\nu\ko^2}+\frac{\nu\ko^2}{2}|u|^2, \label{energyeq}
\end{equation}
\begin{equation}
\frac{1}{2} \frac{d}{dt} |A^{\frac{1}{2}}u|^2+\nu |Au|^2=(g,Au)\leq\frac{|g|^2}{2\nu}+\frac{\nu|Au|^2}{2}. \label{enstrophyeq}
\end{equation}
(\ref{energyeq}) and (\ref{enstrophyeq}) are called the balance equations for the energy and enstrophy, respectively. Applying Gronwall's lemma to (\ref{energyeq}) and (\ref{enstrophyeq}) we obtain, for all $t\geq t_0$, that   
\begin{equation}
|u(t)|^2\leq e^{-\nu\ko^2(t-t_0)}|u(t_0)|^2+(1-e^{-\nu\ko^2(t-t_0)}) {\nu}^2G^2, \label{energyieq}
\end{equation}
\begin{equation}
|A^{\frac{1}{2}}u(t)|^2\leq e^{-\nu\ko^2(t-t_0)}|A^{\frac{1}{2}}u(t_0)|^2+(1-e^{-\nu\ko^2(t-t_0)}) {\nu}^2\ko^2G^2. \label{enstrophyieq}
\end{equation}
From (\ref{enstrophyieq}) we see that the closed ball $B$ of radius $2\nu\kappa_0 G$ in $\D(A^{1/2})$, by the Sobolev embedding theorem, is an absorbing set in $H$. Therefore, we can define the global attractor $\mathcal{A}$ as in  (\ref{attractor}).

In the next lemma, we list several necessary estimates involving $B(\cdot,\cdot)$.

\begin{lem} 
\label{bioplist}
The following hold in the appropriate space,
\begin{align}
\label{bieq2} |(B(u,u),A^{2}u)|&\leq 2c_L^2|Au||A^{\frac{3}{2}}u||A^{\frac{1}{2}}u|,\, u\in\D(A^2),\\
\label{bieq3}|(B(u,u),A^{3}u)|&\leq \sqrt{2}(\sqrt{2}c_L^2+c_A)|u|^{\frac{1}{2}}|Au|^{\frac{1}{2}}|A^{\frac{3}{2}}u||A^2 u|,\, u\in\D(A^3) .
\end{align}
\end{lem}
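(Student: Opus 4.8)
The plan is to derive both inequalities from the identity \eqref{abiop}, together with H\"older's inequality and the Ladyzhenskaya and Agmon inequalities \eqref{ladyzh}, \eqref{agmon}. The role of \eqref{abiop} (and of the orthogonality relation $(B(v,w),w)=0$, which is immediate from \eqref{biop1}) is to shuffle the derivatives in the trilinear forms so that in every resulting term the factor carrying the most derivatives is no worse than $A^{3/2}u$ for \eqref{bieq2}, resp. $A^{2}u$ for \eqref{bieq3}; the estimates \eqref{ladyzh} and \eqref{agmon} then apply directly.

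For \eqref{bieq2}: since $u\in\D(A^2)\subset\D(A^{3/2})$, \eqref{abiop} gives $B(u,u)\in\D(A)$, so by self-adjointness of $A$ I would write $(B(u,u),A^2u)=(AB(u,u),Au)$ and use \eqref{abiop} again to get $(AB(u,u),Au)=(B(u,Au),Au)-(B(Au,u),Au)$; the first term vanishes because $(B(u,Au),Au)=0$. For the second, $(B(Au,u),Au)=\int_\Omega((Au\cdot\nabla)u)\cdot Au\,dx$, so H\"older with exponents $4,2,4$ followed by \eqref{ladyzh} applied to $Au\in\D(A^{1/2})$ yields $|(B(Au,u),Au)|\le|Au|_{L^4}^2|A^{1/2}u|\le c_L^2|Au||A^{3/2}u||A^{1/2}u|$, which already gives \eqref{bieq2}.

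For \eqref{bieq3}: now $u\in\D(A^3)$, so $B(u,u)\in\D(A)$ and $A^2u\in\D(A)$; as before $(B(u,u),A^3u)=(AB(u,u),A^2u)$, and \eqref{abiop} gives $(B(u,u),A^3u)=(B(u,Au),A^2u)-(B(Au,u),A^2u)$. I would bound the first term by putting $u$ in $L^\infty$ via \eqref{agmon}: $|(B(u,Au),A^2u)|\le|u|_\infty|A^{3/2}u||A^2u|\le c_A|u|^{1/2}|Au|^{1/2}|A^{3/2}u||A^2u|$. For the second I would use H\"older with exponents $4,4,2$ and two applications of \eqref{ladyzh} (to $Au$ and to $\nabla u$): $|(B(Au,u),A^2u)|\le|Au|_{L^4}|\nabla u|_{L^4}|A^2u|\le c_L^2|A^{1/2}u|^{1/2}|Au||A^{3/2}u|^{1/2}|A^2u|$. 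To recast this in the stated form I would invoke the elementary interpolation inequality $|A^{1/2}u|\,|Au|\le|u|\,|A^{3/2}u|$ — which follows at once from $|A^{1/2}u|^2=(u,Au)$, $|Au|^2=(A^{1/2}u,A^{3/2}u)$ and Cauchy--Schwarz, i.e. from log-convexity of $\sigma\mapsto|A^\sigma u|$ — to get $|A^{1/2}u|^{1/2}|Au||A^{3/2}u|^{1/2}\le|u|^{1/2}|Au|^{1/2}|A^{3/2}u|$. Adding the two estimates gives $|(B(u,u),A^3u)|\le(c_L^2+c_A)|u|^{1/2}|Au|^{1/2}|A^{3/2}u||A^2u|$, which is contained in \eqref{bieq3} since $c_L^2+c_A\le\sqrt2(\sqrt2c_L^2+c_A)$.

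The only genuinely delicate point is this derivative bookkeeping: expanding $(u\cdot\nabla)u$ and integrating by parts carelessly produces factors such as $A^{5/2}u$ that lie outside the scope of \eqref{ladyzh} and \eqref{agmon}, whereas the identity \eqref{abiop} (and, for alternative routes, its companions \eqref{biop3}, \eqref{biop4}) confines every factor to the admissible range. Everything else is routine, and no attempt is made to optimize the numerical constants.
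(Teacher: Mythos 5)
Your proof is correct and follows the route the paper intends: the identity \eqref{abiop} (derived immediately before the lemma for exactly this purpose) to redistribute the derivatives, followed by H\"older and the Ladyzhenskaya/Agmon inequalities \eqref{ladyzh}, \eqref{agmon} --- the paper itself gives no further detail, calling the estimates straightforward applications of these inequalities. Your bookkeeping is sound (the cancellation $(B(u,Au),Au)=0$ and the interpolation $|A^{\frac{1}{2}}u|\,|Au|\le|u|\,|A^{\frac{3}{2}}u|$ are both valid in the real case), and in fact you obtain the slightly sharper constants $c_L^2$ and $c_L^2+c_A$, which imply the stated bounds.
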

Relations (\ref{bieq2}) and (\ref{bieq3}) are straightforward applications of the Ladyzhenskaya and Agmon's inequalities.

Now we consider the NSE with complexified time and the corresponding solutions in $H_{\mathbb{C}}$ as in \cite{FHN07} and \cite{FJK96}. We recall that
\begin{equation} 
\label{defcom}
H_{\mathbb{C}}=\{u+iv: u,v\in H\},
\end{equation}
and that $H_{\mathbb{C}}$ is a Hilbert space with respect to the following inner product
\begin{equation*} 
\label{cominn}
(u+iv,u'+iv')_{H_{\mathbb{C}}}=(u,u')_H+(v,v')_H+i[(v,u')_H-(u,v')_H],
\end{equation*}
where $u$, $u'$, $v$, $v'\in H$. The extension $A_{\mathbb{C}}$ of $A$ is given by
\begin{equation*}
A_{\mathbb{C}}(u+iv)=Au+iAv,
\end{equation*}
for $u$, $v\in \mathcal{D}(A)$; thus $\D(\ac) = \D(A)_\mathbb{C}$. Similarly, $B(\cdot,\cdot)$ can be extended to a bounded bilinear operator from $\D(\ac^{\frac{1}{2}})\times\D(\ac)$ to $\hc$ by the formula
\begin{equation*}
B_{\mathbb{C}}(u+iv,u'+iv')=B(u,u')-B(v,v')+i[B(u,v')+B(v,u')],
\end{equation*}
for $u$, $v\in \mathcal{D}(A^{\frac{1}{2}})$, $u'$, $v'\in \mathcal{D}(A)$. We should note that (\ref{biop1})-(\ref{abiop}) do not hold in the complex case.

The Navier-Stokes equations with complex time are defined as
\begin{align}
\label{comnseq}
\frac{d u(\zeta)}{d \zeta}+\nu A_{\mathbb{C}} u(\zeta)+B_{\mathbb{C}}(u(\zeta),u(\zeta))=g, 
\end{align}
where $\zeta\in \mathcal{S}(\delta)$ (see (\ref{e21})), $u(\zeta)\in H_{\mathbb{C}}$, and $\frac{du(\zeta)}{d\zeta}$ denotes the derivative of $H_{\mathbb{C}}$-valued analytic functions $u(\zeta)$.

For notational simplicity, in the following considerations we drop the subscript $\mathbb{C}$ for the inner products, norms, and the operators just defined. 	
\section{Supplementary estimates}
We now obtain estimates for the nonlinear terms with complexified time, observing that neither relation (\ref{biop2}) nor Lemma \ref{bioplist} hold in this case. We will use the Ladyzhenskaya and Agmon inequalities as described before, noting the additional factor 2.
\begin{eqnarray} 
\label{lady} 
|u|_{L^4} \leq 2c_L |u|^{\frac{1}{2}} |A^{\frac{1}{2}} u|^{\frac{1}{2}},\hspace {0.3 in} \mbox{where} \hspace{0.1 in} u \in \mathcal{D}(A^{\frac{1}{2}})_{\mathbb C} 
\end{eqnarray} 
and
\begin{eqnarray} 
\label{Agmon} 
|u|_{\infty} \leq 2c_A |u|^{\frac{1}{2}} |Au|^{\frac{1}{2}}, \hspace {0.3 in} \mbox{where} \hspace{0.1 in} u \in \mathcal{D}(A)_{\mathbb C}.
\end{eqnarray}

In the present complex case, the analogue of Lemma \ref{bioplist} is
\begin{lem}
\label{lemma51}
 For $u \in \mathcal{D}(A^{\alpha})_\mathbb{C}$, where $\alpha$ = $1$, $2$ or $3$, 
 \begin{align}
   |(B(u,u),Au)| \leq 4 c^2_L |u|^{\frac{1}{2}} |A^{\frac{1}{2}}u| |Au|^{\frac{3}{2}},\\
  \label{alpha4}|(B(u,u),A^2u)| \leq 2(2 c^2_L + c_A) |u|^{\frac{1}{2}} |Au|^{\frac{3}{2}} |A^{\frac{3}{2}}u|,\\
  \label{alpha6}|(B(u,u),A^3u)| \leq 2(2 c^2_L+ c_A) |u|^{\frac{1}{2}} |Au|^{\frac{3}{2}}  |A^{\frac{5}{2}}u|.  
  \end{align}
  \end{lem}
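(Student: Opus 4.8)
The plan is to write each inner product $(B(u,u),A^{\alpha}u)$ as an integral $\int_{\Omega}(u\cdot\nabla)(\,\cdot\,)\cdot\overline{(\,\cdot\,)}\,dx$ and estimate it by H\"older's inequality together with the complexified Ladyzhenskaya \eqref{lady} and Agmon \eqref{Agmon} inequalities, the Poincar\'e inequality \eqref{poincare}, and the log-convexity (interpolation) inequalities $|A^{\theta s_{0}+(1-\theta)s_{1}}u|\le|A^{s_{0}}u|^{\theta}|A^{s_{1}}u|^{1-\theta}$, all of which pass verbatim to $\hc$. We also use that for $v\in\D(A^{3/2})_{\C}$ one has $|\nabla v|_{L^{2}}=|A^{1/2}v|$ and $|D^{2}v|_{L^{2}}=|Av|$ under periodic boundary conditions, so that a componentwise application of Ladyzhenskaya gives $|\nabla v|_{L^{4}}\le 2c_{L}|A^{1/2}v|^{1/2}|Av|^{1/2}$. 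For $\alpha=1$ the argument is immediate: since $Au\in\hc$ and $\mathcal P$ is the self-adjoint orthogonal projection onto $\hc$, $(B(u,u),Au)=\int_{\Omega}(u\cdot\nabla)u\cdot\overline{Au}\,dx$, and from $|(u\cdot\nabla)u|\le|u|\,|\nabla u|$ pointwise together with H\"older with exponents $(4,4,2)$ one gets $|(B(u,u),Au)|\le|u|_{L^{4}}\,|\nabla u|_{L^{4}}\,|Au|$; inserting \eqref{lady} for $u$ and the bound above for $\nabla u$ produces exactly $4c_{L}^{2}|u|^{1/2}|A^{1/2}u|\,|Au|^{3/2}$.

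For $\alpha=2$ and $\alpha=3$ the crucial observation is that, although the bilinear identities \eqref{biop1}--\eqref{biop4} fail in $\hc$, the differential identity \eqref{abiop}, $AB(u,u)=B(u,Au)-B(Au,u)$, does not: it is a quadratic expression in $u$ built only from the $\C$-linear operators $A$ and $\mathcal P$, hence it extends $\C$-bilinearly from the real case (equivalently, $\mathcal P[(Au\cdot\nabla)u]=\mathcal P[\sum_{l}(\partial_{l}u\cdot\nabla)\partial_{l}u]$ for divergence-free complex $u$); in particular $B(u,u)\in\D(A)_{\C}$ for $u\in\D(A^{3/2})_{\C}$. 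Using self-adjointness of $A$ and \eqref{abiop}, for $u\in\D(A^{\alpha})_{\C}$ with $\alpha\in\{2,3\}$,
\begin{equation*}
(B(u,u),A^{\alpha}u)=\bigl(AB(u,u),A^{\alpha-1}u\bigr)=\bigl(B(u,Au),A^{\alpha-1}u\bigr)-\bigl(B(Au,u),A^{\alpha-1}u\bigr).
\end{equation*}
The first term equals $\int_{\Omega}(u\cdot\nabla)(Au)\cdot\overline{A^{\alpha-1}u}\,dx$, which H\"older with exponents $(\infty,2,2)$ bounds by $|u|_{\infty}\,|A^{3/2}u|\,|A^{\alpha-1}u|$; then Agmon \eqref{Agmon} and the interpolation $|A^{3/2}u|\,|A^{\alpha-1}u|\le|Au|\,|A^{\alpha-1/2}u|$ give at most $2c_{A}|u|^{1/2}|Au|^{3/2}|A^{\alpha-1/2}u|$, the Agmon contribution. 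The second term equals $\int_{\Omega}(Au\cdot\nabla)u\cdot\overline{A^{\alpha-1}u}\,dx$, which H\"older with exponents $(4,4,2)$ bounds by $|Au|_{L^{4}}\,|\nabla u|_{L^{4}}\,|A^{\alpha-1}u|$; inserting \eqref{lady} for $Au$ and the derivative bound for $\nabla u$ gives $4c_{L}^{2}|Au|\,|A^{3/2}u|^{1/2}|A^{1/2}u|^{1/2}|A^{\alpha-1}u|$, and a short chain of interpolation inequalities converts this into $4c_{L}^{2}|u|^{1/2}|Au|^{3/2}|A^{\alpha-1/2}u|$, the Ladyzhenskaya contribution. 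Summing the two gives $2(2c_{L}^{2}+c_{A})|u|^{1/2}|Au|^{3/2}|A^{\alpha-1/2}u|$, i.e.\ \eqref{alpha4} when $\alpha=2$ and \eqref{alpha6} when $\alpha=3$.

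The main obstacle is the loss of the real cancellations $(B(u,u),Au)=0$ and $(B(u,Au),Au)=0$. The naive alternative of expanding $\Delta((u\cdot\nabla)u)$ and integrating by parts splits $(B(u,u),A^{2}u)$ into three pieces whose separate estimates overshoot, producing a constant of order $12c_{L}^{2}$ in place of $4c_{L}^{2}$. Retaining \eqref{abiop} is exactly what absorbs the offending $\sum_{l}(\partial_{l}u\cdot\nabla)\partial_{l}u$ term back into $(B(Au,u),\cdot)$ and restores the sharp constant; the only remaining subtlety is then bookkeeping — distributing the derivatives so that the highest-order factor always appears as $A^{\alpha-1/2}u$ (never $A^{\alpha}u$), while the Agmon and Ladyzhenskaya factors assemble to precisely $|u|^{1/2}|Au|^{3/2}$ — which for $\alpha=3$ is where the interpolation inequalities have to be applied with the most care.
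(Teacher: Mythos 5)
Your proof is correct, but for \eqref{alpha4} and \eqref{alpha6} it takes a genuinely different route from the paper's. The paper never touches the identity \eqref{abiop}: it integrates by parts once, writing $(B(u,u),A^{\alpha}u)$ as $\sum_{j}\bigl[(B(D_ju,u),D_jA^{\alpha-1}u)+(B(u,D_ju),D_jA^{\alpha-1}u)\bigr]$, applies Ladyzhenskaya \eqref{lady} to the first group and Agmon \eqref{Agmon} to the second, and finishes with the single interpolation $|A^{\frac{1}{2}}u|\le|u|^{\frac{1}{2}}|Au|^{\frac{1}{2}}$. You instead move the full Laplacian onto $B(u,u)$ and invoke the complexified \eqref{abiop}. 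That step deserves comment, because the paper explicitly asserts that \eqref{biop1}--\eqref{abiop} ``do not hold in the complex case''; your observation that \eqref{abiop}, unlike \eqref{biop1}--\eqref{biop4}, is a vector identity quadratic in $u$ involving no inner product (hence no conjugation), and therefore does extend to $\D(A^{\frac{3}{2}})_{\mathbb{C}}$ by polarization and the $\mathbb{C}$-bilinearity of $B_{\mathbb{C}}$, is correct --- the paper's blanket remark is accurate only for the inner-product identities. What your route buys is the sharp constant with no derivative bookkeeping on $B$ itself; what it costs is dependence on the 2D-specific identity \eqref{abiop} (the paper's decomposition is purely algebraic and dimension-free) together with a heavier interpolation step at $\alpha=3$: the ``short chain'' you need there amounts to $|A^{\frac{1}{2}}u|^{\frac{1}{2}}|A^{\frac{3}{2}}u|^{\frac{1}{2}}|A^{2}u|\le|u|^{\frac{1}{2}}|Au|^{\frac{1}{2}}|A^{\frac{5}{2}}u|$, which does hold (the map $s\mapsto\ln|A^{s}u|$ is convex, and the two exponent distributions have equal total mass and mean with the left majorized by the right), but it is the one nontrivial verification in your argument and should be displayed rather than asserted. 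The $\alpha=1$ case and all final constants coincide exactly with the paper's.
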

  \begin{proof}
      To obtain the first inequality, use (\ref{lady}) for the first two terms and the $L^2$ norm for the third term.
   For the second inequality, we use integration by parts to get 
   \begin{align} 
   \label{sup1}  
   |B(u,u), A^2u)| \leq \displaystyle\sum_{j=1,2}[|(B(D_ju, u),D_jAu)| +|(B(u, D_ju),D_jAu)|].  
   \end{align} 
   
  Using (\ref{lady}) and (\ref{Agmon}), we have 
   \begin{eqnarray} 
   \label{sup2}
   \sum_j|(B(D_ju, u),D_jAu)| \leq 4 c^2_L|A^{\frac{1}{2}}u| |Au| |A^{\frac{3}{2}}u| \hspace{ 0.2in }
   \end{eqnarray} 
   and 
   \begin{eqnarray} 
   \label{sup3}  
   \sum_j|(B(u, D_ju),D_jAu)| \leq 2c_A|u|^{\frac{1}{2}} |Au|^{\frac{3}{2}} |A^{\frac{3}{2}}u|,
   \end{eqnarray}
   for $u \in \mathcal{D}(A^{\frac{3}{2}})$. 
   
   Now apply the interpolating relation $|A^{\frac{1}{2}}u| \leq |Au|^{\frac{1}{2}} |u|^{\frac{1}{2}}$ in (\ref{sup2}) to arrive at (\ref{alpha4}).
   
   For the last inequality, we use the same method as above. Integrating by parts, using (\ref{lady}) and (\ref{Agmon}),  and then interpolating, we have
     \begin{align*}
      \label{sup4}  
      |B(u,u), A^3u)| &\leq \displaystyle\sum_{j=1,2}[|(B(D_ju, u),D_jA^2u)| +|(B(u, D_ju),D_jA^2u)|]\\
    &\leq   4 c^2_L |A^{\frac{1}{2}}u| |Au| |A^{\frac{5}{2}}u| + 2 c_A |u|^{\frac{1}{2}} |Au|^{\frac{3}{2}}  |A^{\frac{5}{2}}u|\nonumber\\ 
    &\leq 2(2 c^2_L+ c_A) |u|^{\frac{1}{2}} |Au|^{\frac{3}{2}}  |A^{\frac{5}{2}}u|, \nonumber
    \end{align*}
thus obtaining (\ref{alpha6}).    
    
   \end{proof}
Concerning the existence of $\mathcal{D}(A^{\frac{1}{2}})_\mathbb{C}$-valued analytic extensions of the solutions of the NSE, one can consult Section 7 in Part I of \cite{T95} and Chapter 12 in \cite{CF89}. However, for our work, we need the following observations.
\begin{remark}
	Many of the differential relations to follow are of the form
	\begin{equation*}
		\frac{d}{d\rho}\Phi(u(t_0+\rho e^{i\theta})) = \Psi(u(t_0+\rho e^{i\theta})),
	\end{equation*}
	where $\Phi(u)$ and $\Psi(u)$ are explicit functions of $u$ in a specified subspace of $H$.
	Often the definition of $\Psi(u)$ involves many terms. Therefore in the sequel we will make the following abuse of notation
		\begin{equation*}
		\frac{d}{d\rho}\Phi(u(t_0+\rho e^{i\theta})) = \Psi(u).
	\end{equation*}
\end{remark}
\begin{remark}
\label{r52}
Let $\alpha \in \mathbb{N}$. To solve the equation (\ref{comnseq})
in a strip $\mathcal{S}(\delta_{\alpha})$ and to insure that $u(\zeta)$ is a $\mathcal{D}(A^{\alpha/2})_{\mathbb{C}}$-valued analytic function (equivalently, $A^{\alpha/2}u(\zeta)$ is $H_{\mathbb{C}}$-valued analytic), the proof for the case $\alpha=1$ presented in \cite{T95} and \cite{CF89} shows that it suffices to establish the following fact:

For any $t_0\in \mathbb{R}$, $\theta\in [-\frac{\pi}{4}, \frac{\pi}{4}]$ and solution $u(\zeta)$ of the equation (\ref{comnseq}) in $\mathcal{S}(\delta_{\alpha})$, the solution of the equation
\begin{align}
\label{annse}
\frac{d}{d\rho} u(t_0+\rho e^{i \theta})&+\nu (\cos \theta) Au+B(u, u)=g, \quad g\in\D(A^{\frac{\alpha-1}{2}})
\end{align}
satisfies, for
\begin{equation*}
0\leq \rho \leq \frac{\delta_\alpha}{\sin{\pi/4}} = \sqrt{2}{\delta_{\alpha}}, 
\end{equation*}
the following conditions
\begin{equation*}
u(t_0+\rho e^{i\theta})\in \mathcal{D}(A^{\frac{\alpha+1}{2}})_{\mathbb{C}},
\end{equation*}
and $\sup |A^{\frac{\alpha+1}{2}}u(t_0+\rho e^{i\theta})|$ is finite and independent of $t_0$, $\rho$ and $\theta$.

This can be rigorously established with ``the Galerkin approximation, for which analyticity in time is trivial because it is a finite-dimensional system with a polynomial nonlinearity. The crucial part, then, is to obtain suitable a priori estimates for the solution in a complex time region that is independent of the Galerkin approximation.''(see \cite{F01} Chapter II, Section 8, Page 63).  The justification for this procedure is given in the Appendix.
\end{remark}

Using the procedure described in Remark \ref{r52}, we will prove Theorem \ref{t2} by induction on $\alpha$. In order to start a uniform recurrent process we  need $\alpha \geq 3$. In the following Lemmas \ref{lemma54}, \ref{lemma58} and \ref{lemma59} we obtain the necessary estimates for $\alpha = 1,2,3$. We stress that the case $\alpha = 1$ was treated in Theorem 12.1 in \cite{CF89}, while the cases $\alpha = 1,2$ were already established in \cite{DFJ05}, Theorem 11.1, although with different estimates.

\begin{lem}
\label{lemma54}
If $u(\cdot)$ is a solution of the NSE in the attractor $\mathcal{A}$, then 
\begin{enumerate}[(i)]
\item $u(\cdot)$ can be extended to a $\mathcal{D}(A^{\frac{1}{2}})_{\mathbb C}$ -valued analytic function in the strip $\mathcal{S}(\delta_1)$, where 
\begin{equation} 
\label{alpha11}
\delta_1 := \frac{1}{16\cdot 24^3 c_L^8 \nu \ko^2 G^4} \end{equation} and 
\begin{equation} 
\label{alpha12} 
|A^{\frac{1}{2}} u(\zeta)| \leq  \Rt{1}\nu \kappa_0 , \hspace{ 0.2 in } \forall\ \zeta \in \mathcal{S}(\delta_1),
\end{equation} 
where 
\begin{equation}
\label{R1t} 
\Rt{1} = \sqrt{2}G. 
\end{equation} 
\item Moreover,  
defining 
\begin{equation} 
\label{R2}
R_2 = 2137G^3c_L^4, 
\end{equation}  
we have
\begin{align} 
\label{alpha13} 
|Au(t)| \leq  \R{2}\nu\ko^2, \hspace{ 0.1 in} \forall\  t \in \mathbb R.
\end{align} 
\end{enumerate}
\end{lem}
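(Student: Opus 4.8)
\emph{Proof strategy.} The plan for part~(i) is to use the Galerkin and a priori estimate scheme recalled in Remark~\ref{r52} with $\alpha=1$: one works with the Galerkin approximations of \eqref{annse}, for which analyticity in $\rho$ is automatic, derives bounds uniform in the dimension, and passes to the limit as justified in the Appendix; consistency of the ray-wise extensions into one analytic function on the strip is the standard uniqueness argument (cf.\ \cite{T95}, \cite{CF89}). Fix $t_0\in\bR$ and $\theta\in[-\tfrac\pi4,\tfrac\pi4]$, and let $u(t_0+\rho e^{i\theta})$ solve \eqref{annse} with $u(t_0)\in\calA$. Since $\tfrac12\frac{d}{d\rho}|A^{\frac12}u|^2=\Re(\tfrac{d}{d\rho}u,Au)$, pairing \eqref{annse} with $Au$ in $\hc$ and bounding the real parts of the nonlinear and forcing terms by their moduli gives
\[
\tfrac12\frac{d}{d\rho}|A^{\frac12}u|^2+\nu\cos\theta\,|Au|^2\le |g|\,|Au|+|(B(u,u),Au)| .
\]
Here is the crux of the complex-time setting: \eqref{biop2} is unavailable, so $(B(u,u),Au)$ does not vanish. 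I control it by the first estimate of Lemma~\ref{lemma51}, $|(B(u,u),Au)|\le 4c_L^2|u|^{\frac12}|A^{\frac12}u|\,|Au|^{\frac32}$, whose right side is \emph{sub}quadratic in $|Au|$; with $\cos\theta\ge\tfrac1{\sqrt2}$ and two applications of Young's inequality (exponents $\tfrac43,4$ on the nonlinear term, $2,2$ on $|g|\,|Au|$) one absorbs half of the available dissipation $\tfrac{\nu}{\sqrt2}|Au|^2$ and is left with
\[
\frac{d}{d\rho}|A^{\frac12}u|^2+\tfrac{\nu}{\sqrt2}|Au|^2\le C\,c_L^8\nu^{-3}|u|^2|A^{\frac12}u|^4+C'\nu^{-1}|g|^2
\]
for absolute constants $C,C'$.

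To close this, recall that, $u(\cdot)$ being a complete trajectory in $\calA$, letting the initial time tend to $-\infty$ in \eqref{enstrophyieq} gives $|A^{\frac12}u(t)|\le\nu\ko G$, hence $|u(t)|\le\nu G$ by \eqref{poincare}, for all $t\in\bR$. I would then run a continuity argument in $\rho$ with the barrier $|A^{\frac12}u|^2\le 2\nu^2\ko^2G^2$: on any sub-interval $[0,\rho^*]$ on which it holds, \eqref{poincare} gives $|u|^2\le 2\nu^2G^2$, so after discarding the dissipative term the last inequality becomes $\frac{d}{d\rho}|A^{\frac12}u|^2\le K\,c_L^8\nu^3\ko^4G^6$ for an absolute $K$ (the forcing term, of order $\nu^3\ko^4G^2$, is dominated using \eqref{e314}); integrating from $0$ with $|A^{\frac12}u(t_0)|^2\le\nu^2\ko^2G^2$ shows the barrier is not overshot as long as $\rho<\sqrt2\,\delta_1$, with $\delta_1$ exactly as in \eqref{alpha11}. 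Hence the barrier holds throughout $[0,\sqrt2\,\delta_1]$. Since every $\zeta\in\mathcal{S}(\delta_1)$ with $\Im\zeta\ge0$ equals $t_0+\rho e^{i\pi/4}$ with $t_0:=\Re\zeta-\Im\zeta\in\bR$ and $\rho:=\sqrt2\,\Im\zeta<\sqrt2\,\delta_1$ (and symmetrically with $\theta=-\tfrac\pi4$ when $\Im\zeta<0$), and $u(t_0)\in\calA$ in every case, this yields the $\D(A^{\frac12})_{\C}$-valued analytic extension on $\mathcal{S}(\delta_1)$ together with $|A^{\frac12}u(\zeta)|\le\sqrt2\,\nu\ko G=\Rt{1}\nu\ko$, which is \eqref{alpha12}--\eqref{R1t}.

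For~(ii) I would transfer the information of part~(i) to the real axis, where the trajectory is $H$-valued. As $\zeta\mapsto A^{\frac12}u(\zeta)$ is $\hc$-valued analytic on $\mathcal{S}(\delta_1)$ and bounded there by $\Rt{1}\nu\ko$, the Cauchy estimate applied to its $\zeta$-derivative on the disc $\{|\zeta-t|<\delta_1\}\subset\mathcal{S}(\delta_1)$ gives $|A^{\frac12}\frac{du}{dt}(t)|\le\Rt{1}\nu\ko/\delta_1$ for every $t\in\bR$. Pairing \eqref{HNSE} with $Au(t)$ and isolating the dissipation,
\[
\nu|Au|^2=(g,Au)-(B(u,u),Au)-(A^{\frac12}\tfrac{du}{dt},A^{\frac12}u),
\]
I estimate $(g,Au)\le|g|\,|Au|$, use Lemma~\ref{lemma51} again for $|(B(u,u),Au)|$, bound the last term by the Cauchy estimate together with the real-axis bounds $|u(t)|\le\nu G$, $|A^{\frac12}u(t)|\le\nu\ko G$, and substitute the explicit $\delta_1$ of \eqref{alpha11}. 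Writing $|g|=\nu^2\ko^2G$ and setting $X:=|Au|/(\nu\ko^2)$, everything reduces to a scalar inequality of the form
\[
X^2\le GX+4c_L^2G^{\frac32}X^{\frac32}+16\sqrt2\cdot 24^3\,c_L^8G^6 .
\]
Since $c_L^4G^2\ge1$ by \eqref{e314}, at $X=\R{2}=2137\,c_L^4G^3$ each term on the right is an explicit multiple of $c_L^8G^6$ whose sum is strictly below $X^2$; as the right side grows only like $X^{\frac32}$, this forces $X<\R{2}$, i.e.\ $|Au(t)|\le\R{2}\nu\ko^2$ for all $t\in\bR$, which is \eqref{alpha13}. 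Running the argument on the Galerkin approximations and passing to the limit also shows $u(t)\in\D(A)$.

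I expect the main obstacle to be the structural one flagged above: with \eqref{biop2} and \eqref{abiop} gone, $(B(u,u),Au)$ cannot be absorbed into the dissipation uniformly in time because the a priori enstrophy level on $\calA$, $|A^{\frac12}u|=O(\nu\ko G)$, is too large; one only gains a \emph{short} strip, which is why $\delta_1$ in \eqref{alpha11} is forced to be of order $G^{-4}$, and on this short strip the entire inductive scheme of the paper must be run. Beyond that, the only genuine labor is the careful bookkeeping of the Young's-inequality thresholds needed to produce the explicit constants $16\cdot24^3$ in \eqref{alpha11} and $2137$ in \eqref{R2}.
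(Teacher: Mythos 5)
Your argument is correct; part (i) follows the paper's route while part (ii) is genuinely different, so a comparison is in order. In part (i) you derive exactly the paper's differential inequality (pairing \eqref{annse} with $Au$, the first estimate of Lemma \ref{lemma51}, Young, Poincar\'e), but where the paper integrates the resulting Riccati-type inequality explicitly (see \eqref{alpha100} and the threshold \eqref{e529}) you close with a continuity/barrier argument at the level $|A^{\frac12}u|^2\le 2\nu^2\ko^2G^2$; this is a slightly cruder but valid closure, and the bookkeeping does work out (your slope constant is about $4\sqrt2\cdot 24^3\,c_L^8G^6\nu^3\ko^4$ against the $8\sqrt2\cdot 24^3$ that the choice \eqref{alpha11} permits), giving the same $\delta_1$ and $\Rt{1}=\sqrt2\,G$. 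In part (ii) the paper integrates \eqref{maineq} along the ray $\theta=\pi/4$ to obtain the strip-integrated bound \eqref{M} on $\int_0^{\sqrt2\delta_1}|Au|^2\,d\rho$ and converts it to a pointwise bound via the sub-mean-value property of $|Au(\zeta)|$ over the polygon $abcdef$, the constant $2137$ emerging by direct computation; you instead combine the Cauchy estimate $|A^{\frac12}\tfrac{du}{dt}(t)|\le \Rt{1}\nu\ko/\delta_1$ with the elliptic form of \eqref{HNSE} on the real axis and check that the resulting scalar inequality in $X=|Au|/(\nu\ko^2)$ fails for every $X\ge \R{2}$ (each term on the right being $o(X^2)$ there). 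This is legitimate, and in fact slightly wasteful: on the real axis $(B(u,u),Au)=0$ by \eqref{biop2}, so your middle term could be dropped and the method would yield a somewhat smaller constant than $2137$. What the paper's mean-value/polygon scheme buys is that it is precisely the template reused for $A^{\frac32}u$ in \eqref{ineq}--\eqref{e560} and for all higher powers in Lemma \ref{l72}, whereas your Cauchy-plus-elliptic argument, though more elementary at this first level, does not iterate as cleanly at higher $\alpha$ without the commutator estimates of Section \ref{s-6}. The only points worth making explicit are that the pairing with $Au(t)$ and the identity $(\tfrac{du}{dt},Au)=(A^{\frac12}\tfrac{du}{dt},A^{\frac12}u)$ should be carried out on the Galerkin approximations and passed to the limit, as you note, and that the final step needs the one-line observation that the admissible set of $X$ is disjoint from the entire ray $[\R{2},\infty)$, not merely from the single point $X=\R{2}$.
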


\begin{proof}
First, according to Remark \ref{r52} to prove the statement (i) it is sufficient to establish the estimates (\ref{alpha12}) and (\ref{R1t}) for $\delta_1$ chosen as in (\ref{alpha11}).

Taking the inner product of both sides in (\ref{annse}) with $Au(t_{0}+\rho e^{i \theta})$, we obtain
\begin{align*} 
\frac{1}{2}\frac{d}{d\rho}|A^{\frac{1}{2}} u(t_{0}+\rho e^{i\theta})|^2  &= \Re(e^{i\theta} (g, Au)) - \nu\cos\theta |Au|^2- \Re(e^{i \theta}(B(u, u), Au)) \\
&\leq |g||Au| -   \frac{\nu}{\sqrt{2}} |Au|^2 + |(B(u, u), Au))|. \nonumber
\end{align*}
   Using the Cauchy-Schwarz inequality and then the Young's inequality $|ab| \leq |a|^p/p+|b|^q/q$ with $p=q=2$, for the term  $|g||Au|$, we obtain
 \begin{align*} 
 \frac{d}{d\rho}|A^{\frac{1}{2}} u(t_{0}+\rho e^{i\theta})|^2 +  \frac{\nu}{\sqrt{2}} |Au|^2  \leq \sqrt{2} \frac{|g|^2}{\nu}    + |2(B(u, u), Au)|. \end{align*}
We use Lemma \ref{lemma51} for the bilinear term in the above relation to get the following inequality

\begin{align*}   
\frac{d}{d\rho}|A^{\frac{1}{2}} u(t_{0}+\rho e^{i\theta})|^2 +  \frac{\nu}{\sqrt{2}} |Au|^2  \leq \sqrt{2}\frac{|g|^2}{\nu}    + 8 c^2_L |u|^{\frac{1}{2}} |A^{\frac{1}{2}}u| |Au|^{\frac{3}{2}}.
\end{align*}
Using Young's inequality again, with $p=4$ and $q=4/3 $ for the last term, we have 
\begin{align*}  
8 c^2_L |u|^{\frac{1}{2}} |A^{\frac{1}{2}}u| |Au|^{\frac{3}{2}} \leq& \frac{3}{4} \bigg(\nu^{\frac{3}{4}}\bigg(\frac{\sqrt{2}}{3}\bigg)^{\frac{3}{4}} |Au|^{\frac{3}{2}}\bigg)^{\frac{4}{3}} + \frac{1}{4} \bigg( \frac{1}{\nu^{\frac{3}{4}}} \bigg( \frac{3}{\sqrt{2}}\bigg)^{\frac{3}{4}} 8 c^2_L|u|^{\frac{1}{2}} |A^{\frac{1}{2}} u|\bigg)^4, 
\end{align*}
 and hence,
\begin{align} 
\label{maineq}
\frac{d}{d\rho}|A^{\frac{1}{2}} u(t_{0}+\rho e^{i\theta})|^2 +  \frac{\nu}{2\sqrt{2}} |Au|^2  \leq \sqrt{2}\frac{|g|^2}{\nu}    + \frac{8^3 27  c^8_L}{\nu^3 \sqrt{2}} |u|^2 |A^{\frac{1}{2}}u|^4. 
\end{align}
From (\ref{poincare}) and (\ref{maineq}), we obtain  
\begin{align*}  
\frac{d}{d\rho}|A^{\frac{1}{2}} u(t_{0}+\rho e^{i\theta})|^2 \leq \sqrt{2}\frac{|g|^2}{\nu}    + \frac{8^3 27  c^8_L}{\nu^3 \kappa^2_0 \sqrt{2}}  |A^{\frac{1}{2}}u|^6. 
\end{align*}
The above inequality has the form 
\begin{align} 
\label{alpha15} 
\frac{d\phi}{d\rho} \leq \gamma + \beta \phi^3,
\end{align} 
where 
\begin{equation*}
\phi(\rho) := |A^{\frac{1}{2}} u(t_0+\rho e^{i \theta})|^2,\quad \gamma =  \sqrt{2}\frac{|g|^2}{\nu},\quad \beta=  \frac{24^3  c^8_L}{\nu^3 \kappa^2_0 \sqrt{2}}.
\end{equation*}
Integrating (\ref{alpha15}), we obtain 
\begin{align*} 
\int_{\phi(0)}^{\phi(\rho)} \frac{d\phi}{(\gamma^{\frac{1}{3}} + \beta^{\frac{1}{3}} \phi)^3} \leq \int_{\phi(0)}^{\phi(\rho)} \frac{d\phi}{(\gamma + \beta \phi^3)} \leq \rho,
\end{align*}
and hence
\begin{align}
\label {alpha100} 
\frac{1}{2\beta^{\frac{1}{3}} (\gamma^{\frac{1}{3}}  + \beta^{\frac{1}{3}} \phi(0))^2} - \frac{1}{2\beta^{\frac{1}{3}} (\gamma^{\frac{1}{3}}  + \beta^{\frac{1}{3}} \phi(\rho))^2}  \leq \rho. 
\end{align} 
 Thus, if
\begin{align} 
\label{e529}
\rho \leq \frac{1}{ 4 \beta^{\frac{1}{3}} (\gamma^{\frac{1}{3}} + \beta^{\frac{1}{3}} \phi(0))^2}
\end{align}
then $\phi(\rho)$ satisfies 
\begin{align*} 
\gamma^{\frac{1}{3}} + \beta^{\frac{1}{3}} \phi(\rho) \leq \sqrt{2}(\gamma^{\frac{1}{3}} + \beta^{\frac{1}{3}} G^2 (\nu \kappa_0)^2), 
\end{align*}
that is
\begin{align}  
\label{alpha16} 
|A^{\frac{1}{2}}u(t_0 + \rho e^{i \theta})|^2 &\leq (\sqrt{2} -1) \bigg(\frac{\gamma}{\beta}\bigg)^{\frac{1}{3}} + \sqrt{2} G^2 (\nu \ko)^2 \\
&\leq \frac{2^{\frac{1}{3}}(|g| \nu \kappa_0)^{2/3}}{24 c^{8/3}_L} + \sqrt{2} G^2 (\nu \ko)^2 \nonumber\\
&\leq \bigg( \sqrt{2} + \frac{2^{\frac{1}{3}}}{24} \bigg) G^2 (\nu \ko)^2 \leq 2G^2(\nu \ko)^2.\nonumber
\end{align}

Note that in the third inequality above  we used (\ref{e314}).

If $\delta_1$ is defined as in (\ref{alpha11}) and if $\rho \leq \sqrt{2} \delta_1$, then (\ref{e529}) holds. Consequently, (\ref{alpha16}) also holds. That is
\begin{equation*}
|A^{\frac{1}{2}}u(t_0 + \rho e^{i \theta})|\leq\Rt{1}\nu\ko,
\end{equation*}
where $\Rt{1}$ is defined in (\ref{R1t}).

Since $\theta\in[-\frac{\pi}{4},\frac{\pi}{4}]$ and $t_0\in\mathbb{R}$ are arbitrary, we infer
\begin{equation*}
|A^{\frac{1}{2}}u(\zeta)|\leq\Rt{1}\nu\ko, \quad \mbox{for $\zeta\in \mathcal{S}(\delta_1)$}.
\end{equation*}

With this estimate, the proof of statement (i) is concluded.

It remains to prove statement (ii). Integrating (\ref{maineq}) and applying (\ref{e314}), we obtain
\begin{align*}
\frac{\nu}{2\sqrt{2}}\int_{0}^{\sqrt{2}\delta_1} |Au(t_0+\rho e^{i \pi/4})|^2 d\rho &\leq  |A^{\frac{1}{2}}u(t_0)|^2 + \int_0^{\sqrt{2}\delta_1} (\gamma + \beta |A^{\frac{1}{2}}u(t_0+\rho e^{i \pi/4})|^6)d\rho,\nonumber\\
&\leq  2G^2 \nu^2 \ko^2 +\sqrt{2}\gamma \delta_1 + 8\sqrt{2}G^6\nu^6\ko^6\beta\delta_1\nonumber\\
&=\left[2+\frac{1}{8\cdot 24^3c_L^8G^4}+\frac{1}{2}\right]G^2\nu^2\ko^2
\nonumber\\
&\leq \left[2+\frac{1}{8\cdot 24^3}+\frac{1}{2}\right]G^2\nu^2\ko^2 \leq 2\sqrt{2}G^2\nu^2\ko^2,
\end{align*} 
i.e.,
\begin{align}
\label{M}
\int_{0}^{\sqrt{2}\delta_1} |Au(t_0+\rho e^{i \pi/4})|^2 d\rho &\leq 8G^2\nu\ko^2.
\end{align}
Since $Au(\zeta)$ is an analytic function in $D(t_0,\delta):=\{s_1+is_2: |s_1-t_0|^2 + s^2_2 \leq \delta_1^2\}$, it satisfies the mean value property
\begin{equation*} 
\label{e534}
Au(t_0) = \frac{1}{\pi \delta^2_1} \iint_{D(t_0,\delta_1)} Au(s_1+is_2) ds_1ds_2, 
\end{equation*} 
from which we deduce 
\begin{align*} 
|Au(t_0)| \leq \frac{1}{\pi \delta^2_1} \iint_{D(t_0,\delta_1)} |Au(s_1+is_2)| ds_1 ds_2. 
\end{align*} 
In order to exploit the estimate (\ref{M}), we replace the disk $D(t_0, \delta_1)$ by the polygon $abcdef$ as shown in the figure below.
\begin{center}
    \begin{tikzpicture}
        \draw (-2,0)node{$a$} -- (-1,-1)node{$b$} -- (2, -1)node{$c$} -- (1, 0)node{$d$} -- (2, 1)node{$e$} -- (-1,1)node{$f$} --(-2,0);
        \draw (-2.5,0) -- (2.5,0);
        \draw (0,-1.5) -- (0,1.5);
        \draw (0,0) node{$D(t_0,\delta_1)$} circle(1cm);
        \draw [step=0.5cm, gray, very thin] (-2.4,-1.4) grid (2.4, 1.4);
        \foreach \x in {-2,-1,...,1}
            {
                \draw (\x,0)--(\x+1,1);
                \draw (\x,0)--(\x+1,-1);
            }
    \end{tikzpicture}
\end{center}

Then, by using Schwarz reflection principle, we obtain
\begin{align*} 
\label{e536}
|Au(t_0)| &\leq \frac{1}{\pi \delta_1^2} \int_{abcdef} |Au(s_1+is_2)| ds_1ds_2 \\
&= \frac{2}{\sqrt{2} \pi \delta_1^2} \displaystyle\int_{t_0-2\delta_1}^{t_0 + \delta_1} \displaystyle\int_{0}^{\sqrt{2}\delta_1} |Au(t+\rho e^{i\pi/4})| d\rho dt \nonumber\\
&\leq  \frac{2}{\sqrt{2} \pi \delta_1^2} \displaystyle\int_{t_0-2\delta_1}^{t_0 + \delta_1}  \bigg(\displaystyle\int_{0}^{\sqrt{2}\delta_1} |Au(t+\rho e^{i\pi/4})|^2 d\rho \bigg)^{\frac{1}{2}} (\sqrt{2}\delta_1)^{\frac{1}{2}}dt\nonumber\\
&\leq \frac{12\cdot 2^{\frac{1}{4}} }{\pi} \bigg(\frac{G^2\nu\ko^2}{\delta_1}\bigg)^{\frac{1}{2}},\nonumber
\end{align*}
that is   
\begin{equation*} 
\label{areal} 
|Au(t_{0})| \leq \frac{6\cdot 2^8 \cdot 3\sqrt{3}}{2^{\frac{1}{4}}\pi}G^3c_L^4\nu\ko^2 \leq \R{2}\nu\ko^2.
\end{equation*}   
This completes the proof of the statement (ii) and Lemma \ref{lemma54}.
\end{proof}
\begin{cor}
  For all $u^0\in \mathcal{A}$, we have
\begin{equation}
\label{e519}
	|A^{\frac{1}{2}}u^0|\leq \R{1}\nu\ko,\quad \R{1} := G,
\end{equation}
and
\begin{equation}
\label{e520}
	|Au^0|\leq \R{2}\nu\ko^2.
\end{equation}
\end{cor}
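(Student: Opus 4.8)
The plan is to read off the second bound \eqref{e520} directly from Lemma~\ref{lemma54}(ii), and to obtain the sharper real-axis bound \eqref{e519} by running the enstrophy balance inequality backward in time. For the second bound: given $u^0\in\mathcal A$, the definition of the global attractor supplies a solution $u(\cdot)$ of the NSE defined for all $t\in\bR$ with $u(0)=u^0$ and $\sup_{t\in\bR}|u(t)|<\infty$; this is precisely a solution of the NSE lying in $\mathcal A$, so Lemma~\ref{lemma54}(ii) applies and gives $|Au^0|=|Au(0)|\le\R2\nu\ko^2$, which is \eqref{e520}.

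For \eqref{e519} I would use the same complete trajectory $u(\cdot)$. Since $u(t)\in\mathcal D(A)$ (indeed $u$ is analytic) for every real $t$, the enstrophy inequality \eqref{enstrophyieq} holds for all $t\ge t_0$ and every $t_0\in\bR$; taking $t=0$ yields, for $t_0\le 0$,
\[
|A^{\frac12}u^0|^2\le e^{\nu\ko^2 t_0}\,|A^{\frac12}u(t_0)|^2+\bigl(1-e^{\nu\ko^2 t_0}\bigr)\,\nu^2\ko^2 G^2 .
\]
Now $|A^{\frac12}u(t_0)|$ is bounded uniformly in $t_0$: the attractor lies in the closed ball of radius $2\nu\ko G$ of $\mathcal D(A^{\frac12})$ (the absorbing set identified just after \eqref{enstrophyieq}), or, if one prefers, one may invoke Lemma~\ref{lemma54}(i) restricted to the real axis. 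Letting $t_0\to-\infty$ therefore kills the first term on the right while the second tends to $\nu^2\ko^2 G^2$, forcing $|A^{\frac12}u^0|^2\le\nu^2\ko^2 G^2$, i.e.\ \eqref{e519} with $\R1=G$. Note this improves the constant $\Rt1=\sqrt2\,G$ from Lemma~\ref{lemma54}(i), which is a bound on the whole strip $\mathcal S(\delta_1)$, to the optimal $G$ on the real axis.

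I do not anticipate a real obstacle here. The only delicate point is the legitimacy of the limit $t_0\to-\infty$, and this rests solely on ingredients already established: the trajectory through $u^0$ is globally defined and bounded (definition of $\mathcal A$ together with the absorbing-ball property coming from \eqref{enstrophyieq}), and it is regular enough for \eqref{enstrophyieq} to be valid on each interval $[t_0,0]$. An alternative that avoids even naming the absorbing ball is to integrate the differential inequality $\frac{d}{dt}|A^{\frac12}u|^2+\nu\ko^2|A^{\frac12}u|^2\le|g|^2/\nu$ — which follows from \eqref{enstrophyeq} together with the Poincar\'e inequality \eqref{poincare} applied to $A^{\frac12}u$ — from $t_0$ to $0$ and pass to the $\limsup$; this is essentially the same computation that produced \eqref{enstrophyieq} in the first place.
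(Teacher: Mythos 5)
Your proposal is correct and follows essentially the same route as the paper: \eqref{e520} is read off from Lemma \ref{lemma54}(ii) at $t=0$, and \eqref{e519} is obtained from the enstrophy inequality \eqref{enstrophyieq} with $t=0$ and $t_0\to-\infty$, using the uniform boundedness of $|A^{\frac12}u(t_0)|$ along the complete trajectory. The extra remarks on justifying the limit are sound but not needed beyond what the paper already records.
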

\begin{proof}
Let $u^0\in \mathcal{A}$ and denote by $u(t)$, $t\in\mathbb{R}$, the solution of the NSE satisfying $u(0) = u^0$. Then, according to Lemma \ref{lemma54}, (\ref{alpha13})  
holds; in particular, for $t=0$. This yields (\ref{e520}). The estimate (\ref{e519}) follows from (\ref{enstrophyieq}) with $t=0$ and $t_0\rightarrow -\infty$.
\end{proof}

\begin{remark}
	It is worth comparing (\ref{alpha13}) with 
	\begin{equation}
	\label{e-540}
		|Au|^2\leq \nu^2\ko^4G^2(2\Lambda_1^{\frac{1}{2}}+c_L^2G^2)
\end{equation}		
from Theorem 3.1 of \cite{DFJ05}, where $\Lambda_1 := \frac{|A^{\frac{1}{2}}g|^2}{\ko^2|g|^2}$. When $\Lambda_1^{\frac{1}{2}}> \frac{2137^2}{2}c_L^8G^4$ the estimate (\ref{e520}) is better than (\ref{e-540}). Moreover, there are cases when $\Lambda_1$ is large, $\mathcal{A}=\{u^0\}$, and thus $|Au^0|\leq G\nu\ko^2$. 
\end{remark}
\begin{cor}
\label{cor51} 
If $0\in\mathcal{A}$ then $g\in\D(A^{\frac{1}{2}})$, and 
\begin{equation}
\label{e540}
|A^{\frac{1}{2}} g| \leq \frac{\Rt{1} \nu \kappa_0}{\delta_1}, 
\end{equation}
where $\delta_1$ and $\Rt{1}$ are defined as in (\ref{alpha11}) and (\ref{R1t}) respectively.
\end{cor}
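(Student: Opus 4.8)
The plan is to use that $0\in\calA$ supplies a solution $u(\cdot)$ of the NSE lying in $\calA$ with $u(0)=0$, and then to read off $g$ directly from the complexified equation \eqref{comnseq} at $\zeta=0$, where the linear and bilinear terms drop out. The quantitative bound then comes from a Cauchy estimate applied to the analytic extension furnished by Lemma \ref{lemma54}.

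First I would invoke Lemma \ref{lemma54}(i): the solution $u(\cdot)$ through $0$ extends to a $\D(A^{\frac12})_{\mathbb C}$-valued analytic function on $\mathcal{S}(\delta_1)$ with $|A^{\frac12}u(\zeta)|\le\Rt{1}\nu\ko$ there. In particular $u$ is differentiable as a $\D(A^{\frac12})_{\mathbb C}$-valued function, so $u'(0)\in\D(A^{\frac12})_{\mathbb C}$; and since $A^{\frac12}$ is continuous from $\D(A^{\frac12})_{\mathbb C}$ (with the norm $|A^{\frac12}\cdot|$) into $\hc$, the function $A^{\frac12}u(\zeta)$ is $\hc$-valued analytic with $\frac{d}{d\zeta}\bigl(A^{\frac12}u(\zeta)\bigr)\big|_{\zeta=0}=A^{\frac12}u'(0)$. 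Evaluating \eqref{comnseq} at $\zeta=0$ and using $u(0)=0$, hence $A_{\mathbb C}u(0)=0$ and $B_{\mathbb C}(u(0),u(0))=0$, gives $g=u'(0)$. As $u(\cdot)$ is real-valued on $\mathbb{R}$, its derivative at $0$ is a real element, so $g=u'(0)\in\D(A^{\frac12})$, which is the membership claim.

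For the bound I would apply the (vector-valued) Cauchy integral formula for the $\hc$-valued analytic function $A^{\frac12}u(\zeta)$ on a disk $\{|\zeta|\le r\}\subset\mathcal{S}(\delta_1)$ with $0<r<\delta_1$ (note $|\zeta|\le r$ forces $|\Im\zeta|\le r<\delta_1$):
\begin{equation*}
A^{\frac12}g=\frac{d}{d\zeta}\bigl(A^{\frac12}u(\zeta)\bigr)\Big|_{\zeta=0}=\frac{1}{2\pi i}\oint_{|\zeta|=r}\frac{A^{\frac12}u(\zeta)}{\zeta^2}\,d\zeta,
\end{equation*}
so that $|A^{\frac12}g|\le r^{-1}\sup_{|\zeta|=r}|A^{\frac12}u(\zeta)|\le \Rt{1}\nu\ko/r$; letting $r\uparrow\delta_1$ yields \eqref{e540}.

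There is no genuine obstacle here; the only two points that merit a word of justification are the interchange of $A^{\frac12}$ with differentiation in $\zeta$ — immediate once one knows from Lemma \ref{lemma54} that the extension is $\D(A^{\frac12})_{\mathbb C}$-valued analytic, not merely $\hc$-valued — and the validity of the Cauchy estimate for Hilbert-space-valued analytic functions, which is standard (e.g. via weak analyticity, pairing against arbitrary elements of $\hc$). Everything else is a direct substitution into \eqref{comnseq}.
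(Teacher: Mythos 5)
Your argument is correct and is essentially the proof given in the paper: both evaluate the complexified NSE at $\zeta=0$ using $u(0)=0$ to identify $g$ with $u'(0)$, deduce $g\in\D(A^{\frac12})$ from the $\D(A^{\frac12})_{\mathbb C}$-valued analyticity supplied by Lemma \ref{lemma54}, and obtain \eqref{e540} from the Cauchy integral formula on a circle of radius $\delta<\delta_1$ followed by $\delta\to\delta_1$. The only cosmetic difference is that the paper additionally cites Theorem 11.1 of \cite{DFJ05} for $g\in\D(A)$, which is not needed for the stated conclusion, while you instead spell out the (correct) remark that $u'(0)$ is a real element of $H$.
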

\begin{proof}
Let $u(t)$, $t\in \mathbb{R}$, be a solution of NSE such that $u(0) = 0$. According to Theorem 11.1 in \cite{DFJ05}, if $0\in\mathcal{A}$, then $g\in \D(A)$. We  evaluate the NSE at $t_0 =0$ and $\theta=0$ to obtain
\begin{equation*}
\frac{du(\zeta)}{d\zeta}\bigg|_{t_0=0} = g. 
\end{equation*}
Since $u(\zeta)$ is a $\D(A^{\frac{1}{2}})_{\mathbb{C}}$-valued analytic function, its derivative $\frac{du(\zeta)}{d\zeta}\in\D(A^{\frac{1}{2}})_{\mathbb{C}}$ for all $\zeta\in \mathcal{S}(\delta_1)$.
Thus, $g\in\D(A^{\frac{1}{2}})$, and then from
\begin{align*} 
A^{\frac{1}{2}}g &= A^{\frac{1}{2}}\frac{du(\zeta)}{d\zeta}|_{\zeta=0}= \frac{dA^{\frac{1}{2}}u(\zeta)}{d\zeta}|_{\zeta=0}=\frac{1}{2 \pi i} \int_{\partial D(0,\delta)} \frac{A^{\frac{1}{2}}u(z)}{z^2} dz,
\end{align*}
where $\delta\in(0,\delta_1)$, we obtain 
\begin{equation}
\label{e5422} 
|A^{\frac{1}{2}} g| \leq \frac{\Rt{1}\nu \kappa_0}{\delta}. 
\end{equation}
Letting $\delta\rightarrow\delta_1$ in (\ref{e5422}), we deduce (\ref{e540})
\end{proof}

We will now establish estimates for the case $\alpha = 2$.
\begin{lem}
\label{lemma58}
If $0\in\mathcal{A}$ and if $u(t)$, $t\in \mathbb{R}$ is any solution of the NSE in $\mathcal{A}$, then $u(t)$ can be extended to a $\mathcal{D}(A)_{\mathbb C}$-valued analytic function $u(\zeta)$, for $\zeta \in \mathcal{S}(\delta_2)$, where 
\begin{align}
\label{delta2}
\delta_2 &:= \min\left\{\delta_1, 16^{-1}\left[ (2c_L^2+c_A)^{\frac{8}{3}}\Rt{1}^{\frac{8}{3}}(\frac{\nu\ko^2}{8\delta_1^2})^{\frac{2}{3}}+(2c_L^2+c_A)^4\Rt{1}^2\R{2}^2(\nu\ko^2)^2\right]^{-\frac{1}{2}}\right\}, 
\end{align}
and $\delta_1$, $\Rt{1}$ and $\R{2}$ are defined in (\ref{alpha11}), (\ref{R1t}) and (\ref{R2}), respectively.
Furthermore, 
\begin{equation}
\label{e542}
|Au(\zeta)| \leq \Rt{2}\nu\ko^2,  \quad{\mbox{
for $\zeta \in \mathcal{S}(\delta_2)$, }}
\end{equation}
where 
\begin{equation}
\label{rt2}
	\Rt{2} := \left(\frac{3(\sqrt{2}\cdot 16^2\cdot 24^6c_L^{16})^{2/3}}{4(2c_L^2+c_A)^{4/3}}G^6+4\R{2}^2\right)^{\frac{1}{2}}.
\end{equation}
\end{lem}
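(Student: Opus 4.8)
The plan is to carry out, one derivative higher, the argument used for $\alpha=1$ in Lemma~\ref{lemma54}. By Remark~\ref{r52} it suffices to produce a priori bounds — uniform in $t_0\in\bR$, $\theta\in[-\tfrac{\pi}{4},\tfrac{\pi}{4}]$, $\rho\in[0,\sqrt2\,\delta_2]$, and uniform for the Galerkin truncations — on $|A^{3/2}u(t_0+\rho e^{i\theta})|$ for solutions of \eqref{annse}; the bound on $|Au(\zeta)|$ will fall out of the same computation. Mirroring the computation in Lemma~\ref{lemma54}, I would first take the inner product of \eqref{annse} with $A^2u(t_0+\rho e^{i\theta})$, pass to real parts, and use $\cos\theta\ge 2^{-1/2}$ together with $(Au,A^2u)=|A^{3/2}u|^2$ to get
\[
\tfrac12\frac{d}{d\rho}|Au|^2 +\frac{\nu}{\sqrt2}|A^{3/2}u|^2
\le \bigl|\Re\bigl(e^{i\theta}(A^{1/2}g,A^{3/2}u)\bigr)\bigr| + \bigl|(B(u,u),A^2u)\bigr|
\le |A^{1/2}g|\,|A^{3/2}u| + |(B(u,u),A^2u)|.
\]
Since $0\in\calA$, Corollary~\ref{cor51} supplies $g\in\D(A^{1/2})$ with $|A^{1/2}g|\le \Rt{1}\nu\ko/\delta_1$, which controls the forcing term; this is precisely the point where $g\in\D(A^{(\alpha-1)/2})$ with $\alpha=2$ is needed. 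For the nonlinear term I would invoke \eqref{alpha4} of Lemma~\ref{lemma51}, $|(B(u,u),A^2u)|\le 2(2c_L^2+c_A)|u|^{1/2}|Au|^{3/2}|A^{3/2}u|$, and bound $|u|\le\Rt{1}\nu$ by part~(i) of Lemma~\ref{lemma54}; this bound is valid on $\mathcal{S}(\delta_1)\supseteq\mathcal{S}(\delta_2)$, which is exactly why the constraint $\delta_2\le\delta_1$ appears in \eqref{delta2}.

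Next I would apply Young's inequality to each term on the right so as to absorb a multiple of $|A^{3/2}u|^2$ into the dissipation, leaving (after dropping the remaining good term and writing $\psi(\rho):=|Au(t_0+\rho e^{i\theta})|^2$) a scalar differential inequality
\[
\frac{d\psi}{d\rho}\le \gamma_2+\beta_2\,\psi^{3/2},
\]
with $\gamma_2$ a fixed multiple of $|A^{1/2}g|^2/\nu$ — hence of $\Rt{1}^2\nu\ko^2/\delta_1^2$ — and $\beta_2$ a fixed multiple of $(2c_L^2+c_A)^2\Rt{1}$. Exactly as in Lemma~\ref{lemma54}, I would use $\gamma_2+\beta_2\psi^{3/2}\le(\gamma_2^{2/3}+\beta_2^{2/3}\psi)^{3/2}$, set $W:=\gamma_2^{2/3}+\beta_2^{2/3}\psi$, integrate $W'\le\beta_2^{2/3}W^{3/2}$ to obtain $W(\rho)^{-1/2}\ge W(0)^{-1/2}-\tfrac12\beta_2^{2/3}\rho$, and feed in the initial datum $\psi(0)=|Au(t_0)|^2\le \R{2}^2\nu^2\ko^4$ from \eqref{e520} (uniform in the Galerkin parameter by Lemma~\ref{lemma54}(ii)). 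Then for $\rho\le\sqrt2\,\delta_2$, where $\delta_2$ is the threshold in \eqref{delta2} making $\tfrac12\beta_2^{2/3}\rho\le\tfrac12 W(0)^{-1/2}$ — note $W(0)^{-1}=(\gamma_2^{2/3}+\beta_2^{2/3}\R{2}^2\nu^2\ko^4)^{-1}$, which after multiplying by $\beta_2^{4/3}$ produces exactly the two terms in the bracket of \eqref{delta2} — one gets $W(\rho)\le 4W(0)$, hence $|Au|^2=\psi(\rho)\le 4(\gamma_2/\beta_2)^{2/3}+4\R{2}^2\nu^2\ko^4$, which is $\Rt{2}^2\nu^2\ko^4$ with $\Rt{2}$ as in \eqref{rt2}. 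Since $t_0,\theta$ are arbitrary this yields $|Au(\zeta)|\le\Rt{2}\nu\ko^2$ on $\mathcal{S}(\delta_2)$; integrating the differential inequality once more also gives a finite bound for $\int_0^{\sqrt2\,\delta_2}|A^{3/2}u|^2\,d\rho$, which together with the Galerkin/limiting procedure recalled in Remark~\ref{r52} (and justified in the Appendix) delivers the claimed $\D(A)_\C$-valued analyticity.

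I expect the only genuinely delicate point to be the nonlinear term. In the complexified problem the cancellation $(B(u,u),Au)=0$ — indeed all of \eqref{biop1}–\eqref{abiop} — is unavailable, so $(B(u,u),A^2u)$ must be estimated through Lemma~\ref{lemma51}, whose cost is the factor $|u|^{1/2}$, controllable only by the previously-established strip bound $|u|\le\Rt{1}\nu$ and not by anything cheaper. Tracking the resulting constants carefully is what turns a merely qualitative statement into the precise forms \eqref{delta2} and \eqref{rt2}; in particular the split of $\gamma_2,\beta_2$ and the choice of Young exponents must be made so the threshold reproduces \eqref{delta2}. One must also check that every ingredient used — the bound on $|A^{1/2}g|$ from Corollary~\ref{cor51}, the bound $|u|\le\Rt{1}\nu$ from Lemma~\ref{lemma54}(i), and the real-axis bound on $|Au|$ from Lemma~\ref{lemma54}(ii) — is already in hand at this stage of the induction and, in the Galerkin realization, is uniform in the truncation parameter, so that passage to the limit is legitimate.
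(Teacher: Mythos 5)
Your proposal follows essentially the same route as the paper: the same inner product with $A^2u$, the same use of Corollary \ref{cor51} to control $|A^{1/2}g|$, the same estimate \eqref{alpha4} from Lemma \ref{lemma51} combined with Poincar\'e and the strip bound $|A^{1/2}u|\le\Rt{1}\nu\ko$, the same Young-absorption leading to $\psi'\le\gamma_2+\beta_2\psi^{3/2}$, and the same integration of the Riccati-type inequality seeded with $|Au(t_0)|\le\R{2}\nu\ko^2$. The only (inconsequential) slip is that $W(\rho)\le 4W(0)$ yields $\psi(\rho)\le 3(\gamma_2/\beta_2)^{2/3}+4\psi(0)$ rather than $4(\gamma_2/\beta_2)^{2/3}+4\psi(0)$, and it is the coefficient $3$ that reproduces \eqref{rt2} exactly.
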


\begin{proof}
Applying again the short procedure in Remark \ref{r52}, we take the inner product of the NSE with the function $A^2u(t_0 +\rho e^{i \theta})$ and obtain
\begin{align*}  
\frac{1}{2}\frac{d}{d\rho}|Au(t_0 + \rho e^{i \theta})|^2 = \Re(e^{i\theta}(g, A^2u) ) &- \nu(\cos\theta) |A^{\frac{3}{2}} u|^2 - \Re(e^{i \theta}(B(u,u),A^2u)).  
\end{align*}
Proceeding as in the proof of Lemma \ref{lemma54}, we obtain
\begin{align*} 
 \frac{1}{2}\frac{d}{d\rho}|Au(t_0 + \rho e^{i \theta})|^2 &\leq  |(A^{\frac{1}{2}}g, A^{\frac{3}{2}}u)| -\frac{\nu}{\sqrt{2}} |A^{\frac{3}{2}} u|^2 +  |(B(u,u),A^2u)|.\nonumber 
 \end{align*}
 Using Young's inequality $ |ab| \leq \frac{1}{p} |a|^p + \frac{1}{q} |b|^q$, with $p=q=2$ for the term $|(A^{\frac{1}{2}}g, A^{\frac{3}{2}}u)|$, we obtain
\begin{align*}
\frac{d}{d\rho}|Au(t_0 + \rho e^{i \theta})|^2 &\leq  \sqrt{2}\frac{|A^{\frac{1}{2}}g|^2}{\nu} -\frac{\nu}{\sqrt{2}} |A^{\frac{3}{2}} u|^2+ 2|(B(u,u),A^2u)|.  \nonumber
 \end{align*} 
We use Lemma \ref{lemma51} to obtain
\begin{align*} 
\frac{d}{d\rho}|Au(t_0 + \rho e^{i \theta})|^2 &\leq \sqrt{2}\frac{|A^{\frac{1}{2}}g|^2}{\nu}-\frac{\nu}{\sqrt{2}} |A^{\frac{3}{2}} u|^2+  4(2 c^2_L + c_A) |u|^{\frac{1}{2}} |Au|^{\frac{3}{2}} |A^{\frac{3}{2}}u| . 
\end{align*}
Using Young's inequality again, we obtain
\begin{align*} 
\frac{d}{d\rho}|Au(t_0 + \rho e^{i \theta})|^2  &+ \frac{1}{2\sqrt{2}} \nu |A^{\frac{3}{2}} u|^2 \leq \sqrt{2}\frac{|A^{\frac{1}{2}} g|^2}{\nu} + \frac{8\sqrt{2}(2c^2_L + c_A)^2}{\nu} |u| |Au|^3 .\nonumber 
\end{align*}
Using Poincar\'{e}'s  inequality and the bound on $|A^{\frac{1}{2}}u|$ obtained in Lemma \ref{lemma54} we obtain
 \begin{equation} 
 \label{maineq2} 
 \frac{d}{d\rho}|Au(t_0 + \rho e^{i \theta})|^2  + \frac{1}{2\sqrt{2}} \nu |A^{\frac{3}{2}} u|^2 \leq \sqrt{2}\frac{|A^{\frac{1}{2}} g|^2}{\nu} + 8\sqrt{2}(2c^2_L + c_A)^2\Rt{1} |Au|^3 . 
 \end{equation}
As before, we ignore the term containing $|A^{\frac{3}{2}} u|^2$
to get the inequality 
\begin{align} 
\label{alpha21}
\frac{d\phi_2(\rho)}{d\rho} \leq \gamma_2 + \beta_2 (\phi_2(\rho))^{\frac{3}{2}}, 
\end{align}
where 
\begin{equation*}
\phi_2(\rho) = |Au(t_0 + \rho e^{i \theta})|^2, \quad 
\gamma_2 = \sqrt{2}\frac{\Rt{1}^2\nu\ko^2}{\delta_1^2},
\quad
\beta_2 = 8\sqrt{2}(2c^2_L +c_A)^2\Rt{1}.
\end{equation*}

From (\ref{alpha21}), we obtain the analogue of the relation (\ref{alpha100}), namely
\begin{equation*}
\frac{2}{\beta_2( (\gamma_2/\beta_2)^{2/3} + \phi_2(0))^{\frac{1}{2}}} - \frac{2}{\beta_2( (\gamma_2/\beta_2)^{2/3} + \phi_2(\rho))^{\frac{1}{2}}} \leq \rho. 
\end{equation*}
We observe that if
\begin{equation*}
\rho < \frac{1}{\beta_2( (\gamma_2/\beta_2)^{2/3} + \phi_2(0))^{\frac{1}{2}}},
\end{equation*}
then 
\begin{equation*}  
((\gamma_2/\beta_2)^{2/3} + \phi_2(\rho))^{\frac{1}{2}} \leq 2 ((\gamma_2/\beta_2)^{2/3} + \phi_2(0))^{\frac{1}{2}} 
\end{equation*}
and hence,
 \begin{align*} 
 |Au(t_0 + \rho e^{i \theta})|^2 \leq 3 (\gamma_2/\beta_2)^{2/3} + 4 |Au(t_0)|^2. 
 \end{align*} 
By (\ref{alpha13}), we obtain  
 \begin{align*}
|Au(t_0 + \rho e^{i \theta})|^2 &\leq \left(\frac{3(\sqrt{2}\cdot 16^2\cdot 24^6c_L^{16})^{2/3}}{4(2c_L^2+c_A)^{4/3}}G^6+4\R{2}^2\right)\nu^2\ko^4,
 \end{align*}
Thus, if we define $\delta_2$ by (\ref{delta2}) and $\Rt{2}$ by (\ref{rt2}), then we obtain (\ref{e542}).
 \end{proof}

We now consider the case $\alpha=3$ after which we can proceed by induction for all  $\alpha >3$. 
Let $\delta_3 = \delta_2/2$, where $\delta_2$ is defined as in Lemma \ref{lemma58} and let $r= (2\sqrt{2} - \sqrt{5}) \delta_3$.
Then, given any $\zeta$ in $\mathcal{S}(\delta_3)$, there is a real $t_0$ such that $D(\zeta, r)$ is in the sector of $D(t_0, 2\sqrt{2}\delta_3)$ where $\theta$ varies from $-\pi/4$ to $\pi/4$, as shown in the next figure. 
	\begin{center}
    	\begin{tikzpicture}[x=2cm,y=2cm] 
        \coordinate (A) at (0,0);
        \draw [step=0.5cm, gray, very thin] (-0.4,-1.0) grid (1.9, 1.0);
        \draw (A) -- (1,1);
        \draw (A) -- (1,-1);
        \draw [very thick] (-0.4, -0.5) -- (1.9, -0.5);
        \draw [very thick] (-0.4, 0.5) -- (1.9, 0.5);
        \draw (1,-1) arc (-45:45:1.415926);
        \draw [gray, thin] (0,0) -- (1.265, 0.6325);
        \draw [gray, thin] (0,0) -- (1.265, -0.6325);
        \node [red] at (1.2, 0.6) {$r$};
        \foreach \x in {-0.5,-0.3,...,0.6}
            {
                \draw (1,\x) circle (0.2962);
                \node at (1,\x) [shape=circle, draw, fill=black, inner sep = 0.1pt] {};
            }
    \coordinate (B) at (0.2,0);
    \coordinate (C) at (0.2,0.2);
    \draw (A) node[left] {$t_0$};
    \def\angleRadius{10pt}
    \draw[red,->,thick] let \p1=(A), \p2=(B), \p3=(C), 
        \n1={atan2(\x2-\x1,\y2-\y1)}, \n2={atan2(\x3-\x1,\y3-\y1)} in
        ($(\p1)!\angleRadius!(\p2)$) arc (\n1:\n2:\angleRadius);
    \draw[red] let \p1=(A), \p2=(B), \p3=(C), 
         \n1={atan2(\x2-\x1,\y2-\y1)}, \n2={atan2(\x3-\x1,\y3-\y1)} in
         (\p1)+(\n1/2+\n2/2:\angleRadius) node[above] {$\frac{\pi}{4}$}; 
    \coordinate (B) at (0.2,0);
    \coordinate (C) at (0.2,-0.2); 
    \def\angleRadius{10pt}
    \draw[red,->,thick] let \p1=(A), \p2=(B), \p3=(C), 
        \n1={atan2(\x2-\x1,\y2-\y1)}, \n2={atan2(\x3-\x1,\y3-\y1)} in
        ($(\p1)!\angleRadius!(\p2)$) arc (\n1:\n2:\angleRadius);
    \draw[red] let \p1=(A), \p2=(B), \p3=(C), 
         \n1={atan2(\x2-\x1,\y2-\y1)}, \n2={atan2(\x3-\x1,\y3-\y1)} in
         (\p1)+(\n1/2+\n2/2:\angleRadius) node[below] {$-\frac{\pi}{4}$};        
    \end{tikzpicture} 
    \end{center}
Using (\ref{maineq2}) and the notation from (\ref{alpha21}) we obtain, for $\theta\in [-\pi/4, \pi/4]$,
\begin{align} 
\label{ineq} 
\frac{\nu}{2\sqrt{2}}\int_0^{2\sqrt{2}\delta_3} |A^{\frac{3}{2}}(t_0+\rho e^{i\theta})|^2d\rho &\leq |Au(t_0)|^2 +\int_{0}^{2\sqrt{2}\delta_3}(\gamma_2 + \beta_2 |Au|^3) d\rho \\
&\leq N_2(\nu\ko^2)^2,\nonumber
\end{align} 
where 
\begin{equation}
\label{N}
N_2:=\R{2}^2+\frac{2\delta_2\Rt{1}^2}{\delta_1^2\nu\ko^2}+16(2c_L^2+c_A)^2\Rt{1}\Rt{2}^3\delta_2\nu\ko^2.
\end{equation}

Using the mean value theorem for the analytic function $A^{\frac{3}{2}}u(\zeta)$ (as we did before for $Au(\zeta)$) in $D(\zeta, r)$ we obtain
\begin{align*} 
|A^{\frac{3}{2}}u(\zeta)| &\leq \frac{1}{\pi r^2}\iint_{\{s_1+is_2 \in D(\zeta, r)\}}|A^{\frac{3}{2}}u(s_1+is_2)|ds_1ds_2\\
&\leq\frac{1}{\pi r^2} \int_{-\pi/4}^{\pi/4}\int_{0}^{2\sqrt{2}\delta_3} |A^{\frac{3}{2}}u(t_0+\rho e^{i\theta})|\rho d\rho d\theta \nonumber\\
&\leq\frac{1}{\pi r^2} \int_{-\pi/4}^{\pi/4} d\theta\left(\int_{0}^{2\sqrt{2}\delta_3} |A^{\frac{3}{2}}u(t_0+\rho e^{i\theta})|^2d\rho\right)^{\frac{1}{2}}\left(\int_0^{2\sqrt{2}\delta_3}\rho^2 d\rho\right)^{\frac{1}{2}} \nonumber\\
&\leq \frac{1}{\pi r^2}\frac{\pi}{2}(2\sqrt{2}N_2\nu\ko^4)^{\frac{1}{2}}\frac{(2\sqrt{2}\delta_3)^{\frac{3}{2}}}{\sqrt{3}}\nonumber\\
&= \frac{4}{\sqrt{3}(2\sqrt{2}-\sqrt{5})^2}N_2^{\frac{1}{2}}\frac{\nu^{\frac{1}{2}}\ko^2}{\delta_3^{\frac{1}{2}}}<4N_2^{\frac{1}{2}}\frac{\nu^{\frac{1}{2}}\ko^2}{\delta_3^{\frac{1}{2}}}.\nonumber
\end{align*}

Now to obtain for $A^{\frac{3}{2}}u(t)$, $t\in\mathbb{R}$, an estimate analogous to (\ref{alpha13}), we use an argument similar to that involving the polygon $abcdef$ in the proof of Lemma \ref{lemma54}; we note that now the roles of $\delta_1$ and (\ref{M}) are played by $\delta_3$ and (\ref{ineq}), respectively.

In this manner, we obtain
\begin{equation}
\label{e560}
|A^{\frac{3}{2}}u(t)|\leq R_3\nu\ko^3,
\end{equation}
where
\begin{equation}
\label{r3}
	R_3 := \frac{12\sqrt{2}}{\pi}(\frac{N_3}{\delta_3\nu\ko^2})^{\frac{1}{2}},
\end{equation}
and
\begin{equation*}
	N_3 := R_2^2+\frac{2\delta_3\Rt{1}^2}{\delta_1^2\nu\ko^2}+16(2c_L^2+c_A)^2\Rt{1}\Rt{2}^3\delta_3\nu\ko^2.
\end{equation*}
We sum up the results obtained above in the following
\begin{lem}
\label{lemma59}
If $0\in\mathcal{A}$ and if $u(t), t\in \mathbb{R}$ is any solution of the NSE in $\mathcal{A}$, then $u(t)$ can be extended to a $\mathcal{D}(A^{\frac{3}{2}})_{\mathbb C}$-valued analytic function $u(\zeta)$, for $\zeta \in \mathcal{S}(\delta_3)$, where
\begin{equation*}
\label{delta3}
\delta_3 := \frac{\delta_2}{2},
\end{equation*}
and $\delta_2$ is defined as in (\ref{delta2}), for which the following estimates holds
\begin{equation*} 
|A^{\frac{3}{2}}u(\zeta)| \leq \Rt{3}\nu\ko^3,  \quad{\mbox{
for $\zeta \in \mathcal{S}(\delta_3)$, }}
\end{equation*}
where 
\begin{equation}
\label{rt3}
\Rt{3} := 4\frac{N_2^{\frac{1}{2}}}{\delta_3^{\frac{1}{2}}\nu^{\frac{1}{2}}\ko},
\end{equation}
 and $N_2$ is defined in (\ref{N}).\\
 
 Moreover, $u(t)$ satisfies the relation (\ref{e560}).
\end{lem}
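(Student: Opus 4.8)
The statement is, in effect, a repackaging of the estimates carried out in the paragraphs that precede it, so the plan is to organise the proof around its three assertions and quote the earlier results.
\emph{Step 1 (the enstrophy-level differential inequality and its integral form).} Since $0\in\mathcal{A}$ we have $g\in\D(A)\subset\D(A^{1/2})$ (Theorem 11.1 of \cite{DFJ05}), and $|A^{1/2}g|\le\Rt{1}\nu\ko/\delta_1$ by Corollary \ref{cor51}. Along a ray $\zeta=t_0+\rho e^{i\theta}$ with $|\theta|\le\pi/4$, I would take the inner product of \eqref{annse} with $A^{2}u$ and repeat the computation from the proof of Lemma \ref{lemma58}: bound the nonlinear term by \eqref{alpha4} of Lemma \ref{lemma51}, apply Young's inequality twice, Poincar\'e \eqref{poincare}, and the bound $|A^{1/2}u|\le\Rt{1}\nu\ko$ from Lemma \ref{lemma54}(i). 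This produces \eqref{maineq2}. Since Lemma \ref{lemma58} already gives that $u(\cdot)$ is a $\D(A)_{\mathbb{C}}$-valued analytic function on $\mathcal{S}(\delta_2)\supset\mathcal{S}(\delta_3)$ with $|Au(\zeta)|\le\Rt{2}\nu\ko^2$, one may integrate \eqref{maineq2} over $0\le\rho\le 2\sqrt2\,\delta_3=\sqrt2\,\delta_2$ \emph{keeping} the dissipative term $\tfrac{\nu}{2\sqrt2}|A^{3/2}u|^2$; using $|Au(t_0)|\le\R{2}\nu\ko^2$ (Lemma \ref{lemma54}(ii)) for the boundary term and the bounds just recalled on the right-hand side, this yields \eqref{ineq} with the explicit constant $N_2$ of \eqref{N}.

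\emph{Step 2 (pointwise bound on $\mathcal{S}(\delta_3)$, and analyticity).} Fix $\zeta\in\mathcal{S}(\delta_3)$. An elementary computation shows there is a real $t_0$ for which the disk $D(\zeta,r)$, with $r=(2\sqrt2-\sqrt5)\delta_3$, lies inside the sector $\{t_0+\rho e^{i\theta}:0\le\rho\le 2\sqrt2\,\delta_3,\ |\theta|\le\pi/4\}$. I would then apply the mean value property of the analytic function $A^{3/2}u$ over $D(\zeta,r)$, pass to polar coordinates centred at $t_0$, apply Cauchy--Schwarz in $\rho$ (the Jacobian factor $\rho$ going into the $(\int\rho^2\,d\rho)^{1/2}$ term), and feed in the $L^2$-in-$\rho$ bound \eqref{ineq}; the outcome is $|A^{3/2}u(\zeta)|\le\Rt{3}\nu\ko^3$ with $\Rt{3}$ as in \eqref{rt3}. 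Carrying this estimate uniformly over the Galerkin truncations (where analyticity in $\zeta$ is automatic) and invoking the limiting procedure of Remark \ref{r52} (justified in the Appendix) upgrades the $\D(A)_{\mathbb{C}}$-analyticity of Lemma \ref{lemma58} to $\D(A^{3/2})_{\mathbb{C}}$-analyticity on $\mathcal{S}(\delta_3)$.

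\emph{Step 3 (bound on the real axis).} For $t\in\mathbb{R}$ I would reprise the polygon argument from the proof of Lemma \ref{lemma54}(ii): integrate \eqref{maineq2} along the rays $\theta=\pm\pi/4$ over $\rho\in[0,\sqrt2\,\delta_3]$ to obtain the analogue of \eqref{M}, cover a neighbourhood of $t$ by a rescaling of the hexagon $abcdef$, estimate $|A^{3/2}u(t)|$ by the mean value property over a disk about $t$ contained in that hexagon, use the Schwarz reflection principle to reduce the area integral to a double integral along those two rays, and finish with Cauchy--Schwarz and the integral bound just obtained. This gives \eqref{e560}, i.e.\ $|A^{3/2}u(t)|\le R_3\nu\ko^3$ with $R_3$ as in \eqref{r3}, completing the proof.

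\emph{Main obstacle.} Conceptually nothing is new beyond the $\alpha=1,2$ cases, so the real work is bookkeeping. One must choose $\delta_3=\delta_2/2$ small enough that the integration range $2\sqrt2\,\delta_3=\sqrt2\,\delta_2$ stays inside the strip $\mathcal{S}(\delta_2)$ on which $|Au(\zeta)|$ is already controlled, and that the inclusion $D(\zeta,r)\subset$ the sector, with $r=(2\sqrt2-\sqrt5)\delta_3$, leaves strictly positive room; one must retain the dissipation term $\tfrac{\nu}{2\sqrt2}|A^{3/2}u|^2$ through the integration, since it is the sole source of the crucial $L^2$-in-$\rho$ bound; and one must track every absolute constant so that $N_2$, $\Rt{3}$ and $R_3$ come out in exactly the advertised form. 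It is precisely this explicitness (together with the halving $\delta_{\alpha+1}=\delta_\alpha/2$) that lets the induction continue uniformly for all $\alpha>3$.
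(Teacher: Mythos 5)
Your proposal is correct and follows essentially the same route as the paper: the paper's proof of Lemma \ref{lemma59} is precisely the chain of computations displayed just before its statement --- integrating \eqref{maineq2} while retaining the dissipative term to obtain \eqref{ineq} with the constant $N_2$ of \eqref{N}, the mean-value/sector argument with $r=(2\sqrt{2}-\sqrt{5})\delta_3$ giving the strip bound with $\Rt{3}$, and the polygon argument (with $\delta_3$ and \eqref{ineq} replacing $\delta_1$ and \eqref{M}) giving \eqref{e560}. Your three steps reproduce this verbatim, including the justification of the analyticity upgrade via the Galerkin scheme and Remark \ref{r52}.
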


\begin{remark}
Lemmas \ref{lemma54}, \ref{lemma58} and \ref{lemma59} establish the validity of Theorem \ref{t2} for the case $\alpha\in\{1, 2, 3\}$.  
\end{remark}
\section{Two more Estimates}
\label{s-6}
In this section we present an extension of the estimates given in Lemma \ref{bioplist} and Lemma \ref{lemma51} to the powers $A^{\alpha}(\alpha\in\mathbb{Z}, \alpha>3)$ of $A$ using the method of Constantin (see \cite{CHEN94}), but with $\ln (\ko^{-1}A^{\frac{1}{2}})$ replacing $A^{\theta}$ with $\theta\in (0,1)$.

\begin{lem}
\label{l75}
    Let $u\in\D(A^{\frac{\alpha}{2}}), v\in\D(A^{\frac{\alpha+1}{2}})$, $w\in\D(A^{\alpha})$, and $\alpha > 3$, then
	\begin{align*}
        |(B(u,v),A^{\alpha}w)|&\leq{2^\alpha c_A}\left(|u|^{\frac{1}{2}}|Au|^{\frac{1}{2}}|A^{\frac{1+\alpha}{2}}{v}|+|A^{\frac{\alpha}{2}}{u}||A^{\frac{1}{2}}{v}|^{\frac{1}{2}}|A^{\frac{3}{2}}{v}|^{\frac{1}{2}}\right)|A^{\frac{\alpha}{2}}{w}|.\nonumber
	\end{align*}
\end{lem}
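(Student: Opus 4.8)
The plan is to reduce the claimed bound to a fractional Leibniz estimate for $A^{\alpha/2}(u\cdot\nabla v)$ in $L^2$ and then to carry that estimate out mode-by-mode on the Fourier side. It suffices to prove the inequality when $u,v,w$ are trigonometric polynomials, the general case following by the Galerkin/density argument of Remark~\ref{r52} together with the estimate itself. For such $u,v,w$, since $B(u,v)=\mathcal P((u\cdot\nabla)v)$, $\mathcal P$ is the orthogonal projection onto $H$, and $A^\alpha w\in H$, one has
\begin{equation*}
(B(u,v),A^\alpha w)=((u\cdot\nabla)v,A^\alpha w)=\big(A^{\alpha/2}(u\cdot\nabla v),\,A^{\alpha/2}w\big),
\end{equation*}
where here $A^{\alpha/2}$ acting on $u\cdot\nabla v$ denotes the (componentwise) fractional Laplacian $(-\Delta)^{\alpha/2}$ on $L^2(\Omega)^2$, self-adjoint and agreeing with $A^{\alpha/2}$ on $H$. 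By Cauchy--Schwarz it is therefore enough to show
\begin{equation*}
|A^{\alpha/2}(u\cdot\nabla v)|\le 2^\alpha c_A\Big(|u|^{\frac12}|Au|^{\frac12}|A^{\frac{1+\alpha}{2}}v|+|A^{\frac{\alpha}{2}}u|\,|A^{\frac12}v|^{\frac12}|A^{\frac32}v|^{\frac12}\Big).
\end{equation*}

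Next I would pass to Fourier series. Writing $(u\cdot\nabla v)^{\wedge}(k)=i\ko\sum_{j+l=k}(\hat u(j)\cdot l)\,\hat v(l)$, one has $|A^{\alpha/2}(u\cdot\nabla v)|^2=L^2\ko^{2\alpha+2}\sum_k|k|^{2\alpha}\big|\sum_{j+l=k}(\hat u(j)\cdot l)\hat v(l)\big|^2$. The key elementary fact is $|k|^\alpha=|j+l|^\alpha\le(|j|+|l|)^\alpha\le 2^\alpha\max(|j|,|l|)^\alpha$, which splits the convolution into a ``high-on-$v$'' piece (summed over $|l|\ge|j|$, where $|k|^\alpha\le2^\alpha|l|^\alpha$) and a ``high-on-$u$'' piece (summed over $|j|>|l|$, where $|k|^\alpha\le2^\alpha|j|^\alpha$). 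Applying the triangle inequality in the weighted $\ell^2_k$ norm and then Young's convolution inequality $\|f*g\|_{\ell^2}\le\|f\|_{\ell^1}\|g\|_{\ell^2}$ — placing $\hat u$ in $\ell^1$ in the first piece and $(\,|l|\,|\hat v(l)|\,)_l$ in $\ell^1$ in the second — and bookkeeping the powers of $L$ and $\ko$ (recall $L\ko=2\pi$), I arrive at
\begin{equation*}
|A^{\alpha/2}(u\cdot\nabla v)|\le 2^\alpha\Big(\Big(\sum_{k\ne0}|\hat u(k)|\Big)\,|A^{\frac{1+\alpha}{2}}v|+|A^{\frac{\alpha}{2}}u|\,\sum_{l\ne0}\big|\widehat{A^{1/2}v}(l)\big|\Big),
\end{equation*}
having used $\ko|l|\,|\hat v(l)|=\big|\widehat{A^{1/2}v}(l)\big|$.

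Finally I would bound the two $\ell^1$-sums by the sharpened Agmon inequality: for every mean-zero trigonometric polynomial $\phi$,
\begin{equation*}
\sum_{k\ne0}|\hat\phi(k)|\le c_A\,|\phi|^{\frac12}|A\phi|^{\frac12},
\end{equation*}
which is the ``stronger form'' of \eqref{agmon}, proved by the method of Constantin (\cite{CHEN94}, \cite{FJY12}) with $\ln(\ko^{-1}A^{1/2})$ in place of the fractional power $A^\theta$. Applying it with $\phi=u$ and with $\phi=A^{1/2}v$ (so that $|A(A^{1/2}v)|=|A^{3/2}v|$) produces exactly the two terms on the right-hand side with the constant $2^\alpha c_A$, completing the proof. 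This last step is the crux and the main obstacle: a direct Sobolev or interpolation bound will not recover the sharp constant $c_A$, and it is precisely to remain at the two-dimensional borderline $\sum_{k\ne0}|k|^{-2}=\infty$ while keeping a clean, $\alpha$-independent constant that the logarithmic operator $\ln(\ko^{-1}A^{1/2})$ replaces a positive power of $A$. The hypothesis $\alpha>3$ is used only to guarantee $u\in\D(A)$ and $v\in\D(A^{3/2})$, so that every norm on the right-hand side is finite; the estimate itself is valid for all $\alpha\ge2$.
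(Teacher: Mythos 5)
Your proposal is correct and follows essentially the same route as the paper: the frequency splitting according to whether the $u$-mode or the $v$-mode dominates, the bound $(|j|+|l|)^\alpha\le 2^\alpha\max(|j|,|l|)^\alpha$ producing the factor $2^\alpha$ (which the paper writes via $e^{\alpha[\ln(|h|+|j|)-\ln|h|-\ln|j|]}$), and the control of the $\ell^1$-sums of Fourier coefficients by $c_A|\phi|^{1/2}|A\phi|^{1/2}$ (which the paper realizes by applying Agmon's inequality to the majorant functions $U$ and $(-\Delta)^{1/2}V$) are all the same ingredients. Your Cauchy--Schwarz/Young packaging of the trilinear sum is computationally identical to the paper's H\"older estimate $|U|_{L^\infty}|(-\Delta)^{1/2}\tilde V|_{L^2}|\tilde W|_{L^2}+|\tilde U|_{L^2}|(-\Delta)^{1/2}V|_{L^\infty}|\tilde W|_{L^2}$, so no further comparison is needed.
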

\begin{proof}
	Fix $\alpha >3$. To simplify the exposition, we denote $\tilde{u}:=A^{\frac{\alpha}{2}}u$ and $u\in\D(A^{\frac{\alpha}{2}})$. 
	
	Then for any $u\in\D(A^{\frac{\alpha}{2}}), v\in\D(A^{\frac{\alpha+1}{2}})$, $w\in\D(A^{\alpha})$, we have
		\begin{align*}
		|(B(u,v),A^{\alpha}w)| &\leq L^2\ko^{1+2\alpha}\sum_{\substack{h,j,k\in\mathbb{Z}^2\setminus\{0\}\\h+j+k=0}} |\hat{u}(h)||j||\hat{v}(j)||\hat{w}(k)||k|^{2\alpha}\\
					&= L^2\ko^{1+\alpha}\sum_{\substack{h,j,k\in\mathbb{Z}^2\setminus\{0\}\\h+j+k=0}}|\hat{u}(h)||j||\hat{v}(j)||\hat{\tilde{w}}(k)||k|^{\alpha}\\
					&\leq L^2\ko^{1-\alpha}\sum_{\substack{h,j,k\in\mathbb{Z}^2\setminus\{0\}\\h+j+k=0}}|\hat{\tilde{u}}(h)||j||\hat{\tilde{v}}(j)||\hat{\tilde{w}}(k)|(|h|+|j|)^{\alpha}|h|^{-\alpha}|j|^{-\alpha}\\
					&= L^2\ko^{1-\alpha}\sum_{\substack{h,j,k\in\mathbb{Z}^2\setminus\{0\}\\h+j+k=0}}|\hat{\tilde{u}}(h)||j||\hat{\tilde{v}}(j)||\hat{\tilde{w}}(k)|e^{\alpha[\ln(|h|+|j|)-\ln|h|-\ln|j|]}\\
&=L^2\ko^{1-\alpha}\sum_{\substack{h,j,k\in\mathbb{Z}^2\setminus\{0\}\\h+j+k=0\\|h|\leq|j|}}\cdots +L^2\ko^{1-\alpha}\sum_{\substack{h,j,k\in\mathbb{Z}^2\setminus\{0\}\\h+j+k=0\\|h|>|j|}}\cdots\\
					&=: I_1 + I_2,
	\end{align*}
	where the notation is self-explanatory.
	
  For $I_1$, since $\ln(|h|+|j|)-\ln|h|-\ln|j|$ is decreasing with respect to $|j|$, we have
        \begin{align*}
            I_1 &\leq L^2\ko^{1-\alpha} \sum_{\substack{h,j,k\in\mathbb{Z}^2\setminus\{0\}\\h+j+k=0\\|h|\leq|j|}}|\hat{\tilde{u}}(h)||j||\hat{\tilde{v}}(j)||\hat{\tilde{w}}(k)|e^{\alpha[\ln2-\ln|h|]}\\
            &= L^2\ko \sum_{\substack{h,j,k\in\mathbb{Z}^2\setminus\{0\}\\h+j+k=0\\|h|\leq|j|}}|\hat{u}(h)||j||\hat{\tilde{v}}(j)||\hat{\tilde{w}}(k)|e^{\alpha\ln 2} \\
            &\leq 2^{\alpha}L^2\ko \sum_{\substack{h,j,k\in\mathbb{Z}^2\setminus\{0\}\\h+j+k=0}}|\hat{u}(h)||j||\hat{\tilde{v}}(j)||\hat{\tilde{w}}(k)|
        \end{align*}
    
     By estimating $I_2$ in a same way, we obtain
            \begin{align*}
        		|(B(u,v),A^{\alpha}w)| &\leq 2^\alpha L^2\ko\sum_{\substack{h,j,k\in\mathbb{Z}^2\setminus\{0\}\\h+j+k=0}}[|\hat{u}(h)||j||\hat{\tilde{v}}(j)|+|\hat{\tilde{u}}(h)||j||\hat{v}(j)|]|\hat{\tilde{w}}(k)|
            \end{align*}
    
     We define the auxiliary functions $U$ and $\tilde{U}$ by $U:=\sum_{\substack{k\in\mathbb{Z}^2\setminus\{0\}}}|\hat{u}(k)|e^{i\ko k\cdot x}$ and $\tilde{U}:=\sum_{\substack{k\in\mathbb{Z}^2\setminus\{0\}}}|\hat{\tilde{u}}(k)|e^{i\ko k\cdot x}$ and in a similar way, the functions $V,\tilde{V},W,\tilde{W}$.Then we have that
        \begin{align*}
        		|(B(u,v),A^{\alpha}w)| &\leq {2^\alpha}\int_{[0,L]^2}[(U\cdot(-\Delta)^{\frac{1}{2}}\tilde{V})+(\tilde{U}\cdot(-\Delta)^{\frac{1}{2}}{V})]\tilde{W}d^2x\\
                &\leq {2^\alpha}[|U|_{L^\infty}|(-\Delta)^{\frac{1}{2}}\tilde{V}|_{L^2}+|\tilde{U}|_{L^2}|(-\Delta)^{\frac{1}{2}}{V}|_{L^\infty}]|\tilde{W}|_{L^2}
                \end{align*}
                
        Using  Agmon's inequality, we obtain
        \begin{align*}
        		|(B(u,v),A^{\alpha}w)| \leq& {2^\alpha c_A} [|U|_{L^2}^{\frac{1}{2}}|(-\Delta)U|_{L^2}^{\frac{1}{2}}|(-\Delta)^{\frac{1}{2}}\tilde{V}|_{L^2}\\
        		&+|\tilde{U}|_{L^2}|(-\Delta)^{\frac{1}{2}}{V}|_{L^2}^{\frac{1}{2}}|(-\Delta)^{\frac{3}{2}}{V}|_{L^2}^{\frac{1}{2}}]|\tilde{W}|_{L^2}\\
        =& {2^\alpha c_A}\left(|u|^{\frac{1}{2}}|Au|^{\frac{1}{2}}|A^{\frac{1+\alpha}{2}}{v}|+|A^{\frac{\alpha}{2}}{u}||A^{\frac{1}{2}}{v}|^{\frac{1}{2}}|A^{\frac{3}{2}}{v}|^{\frac{1}{2}}\right)|A^{\frac{\alpha}{2}}{w}|.
        \end{align*}
\end{proof}
\begin{lem}
\label{l46}
    Let $u\in\D(A^{\frac{\alpha}{2}})_\mathbb{C}, v\in\D(A^{\frac{\alpha+1}{2}})_\mathbb{C}$, $w\in\D(A^{\alpha})_\mathbb{C}$ and $\alpha > 3$, then 
	\begin{equation*}
        |(B(u,v),A^{\alpha}w)|\leq {2^{\alpha+\frac{3}{2}} c_A}\left(|u|^{\frac{1}{2}}|Au|^{\frac{1}{2}}|A^{\frac{1+\alpha}{2}}{v}|+|A^{\frac{\alpha}{2}}{u}||A^{\frac{1}{2}}{v}|^{\frac{1}{2}}|A^{\frac{3}{2}}{v}|^{\frac{1}{2}}\right)|A^{\frac{\alpha}{2}}{w}|.
	\end{equation*}
\end{lem}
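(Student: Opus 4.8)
The plan is to deduce the complex inequality from its real counterpart, Lemma~\ref{l75}, by splitting into real and imaginary parts; this is cleaner than redoing the Fourier-space argument of Lemma~\ref{l75} directly in $\hc$, and it is what produces the stated constant. Write $u=u_1+iu_2$, $v=v_1+iv_2$, $w=w_1+iw_2$ with $u_1,u_2\in\D(A^{\frac{\alpha}{2}})$, $v_1,v_2\in\D(A^{\frac{\alpha+1}{2}})$, $w_1,w_2\in\D(A^{\alpha})$, which is precisely what the hypotheses assert. From the formula for the inner product on $\hc$ and the definition of $\ac^\sigma$ one has $|A^\sigma u|^2=|A^\sigma u_1|^2+|A^\sigma u_2|^2$ for every $\sigma$ (and likewise for $v$, $w$), so that the elementary two-term bound $|A^\sigma u_1|+|A^\sigma u_2|\le\sqrt2\,|A^\sigma u|$ holds; these Cauchy--Schwarz estimates are the only new ingredients.

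First I would expand the complex trilinear term. Writing $B_{\mathbb{C}}(u,v)=[B(u_1,v_1)-B(u_2,v_2)]+i[B(u_1,v_2)+B(u_2,v_1)]$ and $\ac^\alpha w=A^\alpha w_1+iA^\alpha w_2$, and using the inner-product formula on $\hc$, the scalar $(B_{\mathbb{C}}(u,v),\ac^\alpha w)_{\hc}$ is a linear combination, with coefficients in $\{1,-1,i,-i\}$, of the eight real numbers $(B(u_j,v_k),A^\alpha w_l)$ with $(j,k,l)\in\{1,2\}^3$, each triple occurring exactly once; hence
\[
|(B_{\mathbb{C}}(u,v),\ac^\alpha w)_{\hc}|\;\le\;\sum_{j,k,l\in\{1,2\}}|(B(u_j,v_k),A^\alpha w_l)|.
\]
Next, apply Lemma~\ref{l75} to each of the eight (now purely real) terms, bounding $|(B(u_j,v_k),A^\alpha w_l)|$ by $2^\alpha c_A\bigl(|u_j|^{\frac12}|Au_j|^{\frac12}|A^{\frac{1+\alpha}{2}}v_k|+|A^{\frac{\alpha}{2}}u_j||A^{\frac12}v_k|^{\frac12}|A^{\frac32}v_k|^{\frac12}\bigr)|A^{\frac{\alpha}{2}}w_l|$. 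Summing over $l$ replaces $|A^{\frac{\alpha}{2}}w_l|$ by $|A^{\frac{\alpha}{2}}w_1|+|A^{\frac{\alpha}{2}}w_2|\le\sqrt2\,|A^{\frac{\alpha}{2}}w|$; summing the first interior term over $j$ and $k$, using $\sum_j|u_j|^{\frac12}|Au_j|^{\frac12}\le(\sum_j|u_j|)^{\frac12}(\sum_j|Au_j|)^{\frac12}\le\sqrt2\,|u|^{\frac12}|Au|^{\frac12}$ and $\sum_k|A^{\frac{1+\alpha}{2}}v_k|\le\sqrt2\,|A^{\frac{1+\alpha}{2}}v|$, and handling the second interior term the same way with $\sum_j|A^{\frac{\alpha}{2}}u_j|\le\sqrt2\,|A^{\frac{\alpha}{2}}u|$ and $\sum_k|A^{\frac12}v_k|^{\frac12}|A^{\frac32}v_k|^{\frac12}\le\sqrt2\,|A^{\frac12}v|^{\frac12}|A^{\frac32}v|^{\frac12}$, one sees that each of the three variables $u$, $v$, $w$ contributes exactly one factor $\sqrt2$. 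The eight terms therefore sum to at most $2^\alpha\cdot2^{\frac32}c_A$ times the right-hand bracket evaluated at the $\hc$-norms, which is the asserted estimate.

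The argument is essentially bookkeeping once Lemma~\ref{l75} is in hand: no new analytic estimate is needed beyond those already folded into that lemma. The two places where care is required are (i) writing out the eight cross-terms correctly and verifying that every coefficient has modulus one, so that no combinatorial loss occurs in passing from $(B_{\mathbb{C}}(u,v),\ac^\alpha w)_{\hc}$ to the sum of absolute values; and (ii) distributing the two-term Cauchy--Schwarz bounds so that precisely one factor $\sqrt2$ is spent on each variable — this is exactly what yields the constant $2^{\alpha+\frac32}c_A$ rather than a larger one. I expect (ii) to be the only subtle point.
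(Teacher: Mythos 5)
Your proof is correct, and it reaches the stated constant $2^{\alpha+\frac32}c_A$ by a route different from the one the paper intends. The paper omits the proof of Lemma \ref{l46} with the remark that it is ``similar to the one above'': i.e.\ one is meant to rerun the Fourier-mode argument of Lemma \ref{l75} (the splitting over $|h|\le|j|$ versus $|h|>|j|$, the auxiliary functions $U,\tilde U,\dots$) for $\hc$-valued fields, using the complexified Ladyzhenskaya and Agmon inequalities \eqref{lady}--\eqref{Agmon} in place of the real ones, which is where the extra power of $2$ enters. You instead treat Lemma \ref{l75} as a black box: expand $(B_{\mathbb C}(u,v),A^\alpha w)_{\hc}$ into the eight real trilinear terms $(B(u_j,v_k),A^\alpha w_l)$ (each appearing once with a unimodular coefficient), apply the real lemma to each, and recombine via the two-term Cauchy--Schwarz bound $|A^\sigma f_1|+|A^\sigma f_2|\le\sqrt2\,|A^\sigma f|$. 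Your bookkeeping is right: each of $u$, $v$, $w$ is charged exactly one factor $\sqrt2$ (for $u$ and $v$ this uses $\sum_j a_j^{1/2}b_j^{1/2}\le(\sum_j a_j)^{1/2}(\sum_j b_j)^{1/2}$ so that the half-powers recombine), giving $2^{3/2}$ overall and hence the asserted constant. The advantage of your reduction is that it isolates precisely where the loss relative to the real case occurs and requires no re-examination of the Fourier-space estimates; the paper's intended route is self-contained in $\hc$ but forces one to re-verify that each step of the Lemma \ref{l75} computation survives the loss of the reality condition $\hat u(-k)=\hat u(k)^*$. Either argument is acceptable.
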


The proof of Lemma \ref{l46} is omitted since it is similar to the one above, and the only difference is in the constant.

\section{Induction}
The standing assumption in this section is that $0\in\mathcal{A}$. 

Under this assumption we will obtain, for all $\alpha>3$ and for any solution $u(t)$, $t\in\mathbb{R}$, in $\mathcal{A}$, estimates of the form
\begin{equation}
\label{e71}
	|A^\frac{\alpha}{2} u(t)| \leq \R{\alpha}\nu\ko^{\alpha}, \quad \mbox{for $t\in\mathbb{R}$},
\end{equation}
and for its analytic extension 
\begin{equation}
\label{e72}
	|A^\frac{\alpha}{2} u(\zeta)| \leq \Rt{\alpha}\nu\ko^{\alpha}, \quad \mbox{for $\zeta\in \mathcal{S}(\delta_\alpha)$},
\end{equation}
where $\delta = \delta_3$ (i.e., $\delta_\alpha = \delta_3$ for all $\alpha \geq 3$).

Note that (\ref{e71}), (\ref{e72}) were already established for $\alpha=1, 2, 3$ in Section 5. For the values of $\R{\alpha}$, $\Rt{\alpha}$ see (\ref{R1t}), (\ref{R2}), (\ref{rt2}), (\ref{r3}), (\ref{rt3}). Therefore we will assume that (\ref{e71}) and (\ref{e72}) are valid for some $\alpha\in\mathbb{N}, \alpha\geq 3$ and prove by induction (starting with $\alpha=3$) that they are also valid for $\alpha+1$.
%
%
For this we need the following
\begin{lem}
\label{l71}
	Let $u(t)$ be the solution of the NSE satisfying $u(0)=0$  and let $u(\zeta)$ be its $\D(A^{\frac{\alpha}{2}})_\mathbb{C}$-analytic extension in $\mathcal{S}(\delta_\alpha)$, then 
    \begin{equation*}
    g \in \D(A^{\frac{\alpha}{2}}), 
    \end{equation*} 
    and 
   \begin{equation}
   \label{e74}
    	G_{\alpha+1}:=\frac{|A^{\frac{\alpha}{2}}g|}{\nu^2\ko^{\alpha+2}}\leq\frac{\Rt{\alpha}}{\nu\ko^2\delta}.
   \end{equation}
\end{lem}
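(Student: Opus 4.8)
The plan is to transcribe the argument of Corollary \ref{cor51} verbatim, with $A^{1/2}$ replaced by $A^{\alpha/2}$. \textbf{First}, since $u(0)=0$, I evaluate the complexified equation \eqref{comnseq} (equivalently \eqref{annse} with $t_0=0$, $\theta=0$) at $\zeta=0$: the terms $\nu A u(0)$ and $B(u(0),u(0))$ both vanish, leaving
\begin{equation*}
\frac{du(\zeta)}{d\zeta}\bigg|_{\zeta=0}=g .
\end{equation*}
This is legitimate because the extension $u(\zeta)$ produced via Remark \ref{r52} is a genuine solution of \eqref{comnseq} on $\calS(\delta_\alpha)$, and every term above makes sense at $\zeta=0$ since $u(0)=0$.

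\textbf{Second}, by hypothesis $u(\zeta)$ is a $\D(A^{\alpha/2})_\C$-valued analytic function on $\calS(\delta_\alpha)$, so $A^{\alpha/2}u(\zeta)$ is $\hc$-valued analytic there; since the closed operator $A^{\alpha/2}$ commutes with $d/d\zeta$ on such functions, the derivative of $A^{\alpha/2}u(\zeta)$ at $0$ equals $A^{\alpha/2}\frac{du}{d\zeta}\big|_{\zeta=0}=A^{\alpha/2}g$, which therefore exists in $\hc$; hence $g\in\D(A^{\alpha/2})$. Recalling that $\delta_\alpha=\delta_3=:\delta$ for every $\alpha\geq 3$ and invoking the inductive bound \eqref{e72}, namely $|A^{\alpha/2}u(\zeta)|\leq\Rt{\alpha}\nu\ko^\alpha$ throughout $\calS(\delta)$, I apply Cauchy's integral formula for the first derivative on a disk $D(0,\delta')$ with $0<\delta'<\delta$ (which lies in $\calS(\delta)$):
\begin{equation*}
A^{\alpha/2}g=\frac{d}{d\zeta}\big(A^{\alpha/2}u(\zeta)\big)\bigg|_{\zeta=0}=\frac{1}{2\pi i}\int_{\partial D(0,\delta')}\frac{A^{\alpha/2}u(z)}{z^{2}}\,dz ,
\end{equation*}
so that $|A^{\alpha/2}g|\leq \Rt{\alpha}\nu\ko^\alpha/\delta'$. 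Letting $\delta'\uparrow\delta$ gives $|A^{\alpha/2}g|\leq \Rt{\alpha}\nu\ko^\alpha/\delta$, and dividing by $\nu^2\ko^{\alpha+2}$ yields the claimed bound \eqref{e74}.

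\textbf{The main obstacle} --- really the only nonroutine point --- is justifying that $A^{\alpha/2}u(\zeta)$, known to be $\hc$-valued analytic, has derivative exactly $A^{\alpha/2}(du/d\zeta)$ at $\zeta=0$, i.e. that the unbounded closed operator $A^{\alpha/2}$ may be pulled through the difference quotient. For closed operators acting on Banach-space-valued analytic maps whose values lie in the domain this is standard, but I would note it explicitly; alternatively one can argue mode by mode against the eigenbasis $\{w_j\}$, using the uniform bound \eqref{e72} and dominated convergence to pass to the limit in $\sum_j\lambda_j^{\alpha}|(\,\cdot\,,w_j)|^2$. Everything else is a direct copy of the $\alpha=1$ computation in Corollary \ref{cor51}.
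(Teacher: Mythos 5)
Your proposal is correct and follows essentially the same route as the paper: identify $g=\frac{du}{d\zeta}\big|_{\zeta=0}$ from the equation using $u(0)=0$, deduce $g\in\D(A^{\alpha/2})$ from the $\D(A^{\alpha/2})_\mathbb{C}$-valued analyticity, and bound $|A^{\alpha/2}g|$ by Cauchy's integral formula together with the uniform bound \eqref{e72} on $\mathcal{S}(\delta)$. Your extra care with the radius $\delta'<\delta$ and with commuting $A^{\alpha/2}$ past the derivative only makes explicit what the paper leaves implicit.
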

\begin{proof}
	Since $0\in \mathcal{A}$ , we have $g = \frac{du}{dt}|_{t=0} \in \D(A^{\frac{\alpha}{2}})$, 
	and since $u(\zeta)$ satisfies (\ref{e72}) for $t\in\mathbb{R}$, also that
		\begin{equation*}
			|A^{\frac{\alpha}{2}}\frac{du(\zeta)}{d\zeta}|_{\zeta=t}|=|\frac{1}{2\pi i}\int_{|\xi-t|=\delta}\frac{A^{\frac{\alpha}{2}}u(\xi)}{(\xi-t)^2}d\xi|\leq \frac{\Rt{\alpha}\nu\ko^{\alpha}}{\delta},
		\end{equation*}
		for $t\in\mathbb{R}$, in particular $t=0$.
		Therefore, $G_{\alpha+1}\leq \frac{\Rt{\alpha}}{\nu\ko^2\delta}$.
\end{proof}

\begin{lem}
\label{l72}
Let $u(t), t\in \mathbb{R}$ be any solution of the NSE in $\mathcal{A}$.
If $u(t)$ satisfies (\ref{e71}), and its analytic extension satisfies (\ref{e72}) for some $\alpha \geq 3$, then (\ref{e71}) also holds for $\alpha+1$, i.e.
    \begin{equation}
        |A^{\frac{\alpha+1}{2}}u(t)|\leq \R{\alpha+1}\nu\ko^{\alpha+1},\quad \forall\  t\in\mathbb{R},
    \end{equation}
    where 
    \begin{equation}
    \label{e7-10}
    	\R{\alpha+1}^2 := \frac{36}{\pi^2}\left(\frac{1}{\delta\nu\ko^2}+\frac{4}{\nu^2\ko^4\delta^2}+ 2\sqrt{2}\Gamma_{\alpha}\right)\Rt{\alpha}^2.
    \end{equation}
and $\Gamma_\alpha$ is defined in (\ref{e711}) and (\ref{gamma}).
    
    Moreover, we have
    \begin{equation*}
    	\R{\alpha+1}^2 > \Rt{\alpha}^2, 
    \end{equation*}
\end{lem}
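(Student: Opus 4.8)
The plan is to reproduce at level $\alpha+1$ the argument used for $\alpha=1,2,3$ in Lemma~\ref{lemma54}(ii) and Lemma~\ref{lemma59}: derive an enstrophy-type differential inequality for $|A^{\frac{\alpha}{2}}u|^{2}$ along complex rays whose dissipative term controls $\int|A^{\frac{\alpha+1}{2}}u|^{2}\,d\rho$, integrate it against the inductive bounds \eqref{e71} and \eqref{e72}, and then recover a pointwise bound on the real axis through the mean value property of $A^{\frac{\alpha+1}{2}}u(\zeta)$, with the disk replaced by a polygon and the Schwarz reflection principle, exactly as was done for $Au$ in the proof of Lemma~\ref{lemma54}. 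Following Remark~\ref{r52} the whole a priori estimate is carried out for the Galerkin approximations -- for which $A^{\frac{\alpha+1}{2}}u(\zeta)$ is automatically analytic and $u\in\D(A^{\alpha})_{\mathbb C}$ -- uniformly in the Galerkin level, and one then passes to the limit.

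First I would take the inner product of \eqref{comnseq}, written along the ray $\zeta=t_{0}+\rho e^{i\theta}$ with $t_{0}\in\mathbb R$ and $|\theta|\le\pi/4$, with $A^{\alpha}u$, getting
\[
\tfrac12\,\tfrac{d}{d\rho}\,|A^{\frac{\alpha}{2}}u|^{2}+\nu(\cos\theta)\,|A^{\frac{\alpha+1}{2}}u|^{2}
=\Re\!\big(e^{i\theta}(g,A^{\alpha}u)\big)-\Re\!\big(e^{i\theta}(B(u,u),A^{\alpha}u)\big),
\]
and use $\cos\theta\ge 1/\sqrt2$. By Lemma~\ref{l71}, $g\in\D(A^{\alpha/2})$ with $|A^{\frac{\alpha}{2}}g|\le\Rt{\alpha}\nu\ko^{\alpha}/\delta$; writing $(g,A^{\alpha}u)=(A^{\frac{\alpha}{2}}g,A^{\frac{\alpha}{2}}u)$, bounding $|A^{\frac{\alpha}{2}}u|$ by Poincar\'e \eqref{poincare} and applying Young, one absorbs a fraction of the dissipation and is left with a $\rho$-independent term of size $|A^{\frac{\alpha}{2}}g|^{2}/(\nu\ko^{2})\le\Rt{\alpha}^{2}\nu\ko^{2\alpha-2}/\delta^{2}$; this is what ultimately produces the $4/(\nu^{2}\ko^{4}\delta^{2})$ summand in \eqref{e7-10}. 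For the nonlinear term I would apply Lemma~\ref{l46} with the second and third arguments both equal to $u$, namely
\[
|(B(u,u),A^{\alpha}u)|\le 2^{\alpha+\frac32}c_{A}\Big(|u|^{\frac12}|Au|^{\frac12}|A^{\frac{\alpha+1}{2}}u|+|A^{\frac{\alpha}{2}}u|\,|A^{\frac12}u|^{\frac12}|A^{\frac32}u|^{\frac12}\Big)|A^{\frac{\alpha}{2}}u|,
\]
and then estimate $|u|,|A^{\frac12}u|,|Au|,|A^{\frac32}u|$ by the low-order bounds of Lemmas~\ref{lemma54},~\ref{lemma58},~\ref{lemma59} and $|A^{\frac{\alpha}{2}}u|$ by \eqref{e72}. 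The first summand is linear in the top norm $|A^{\frac{\alpha+1}{2}}u|$ and is absorbed into the remaining dissipation by Young; the leftover is a $\rho$-independent constant $C_{\alpha}$ assembled from $c_{A}$, $2^{\alpha}$, the low-order $\Rt{j}$'s and $\Rt{\alpha}^{2}$ -- this is (up to a fixed numerical factor) $\Gamma_{\alpha}\Rt{\alpha}^{2}\nu^{3}\ko^{2\alpha+2}$. The outcome is an inequality of the form $\tfrac{d}{d\rho}|A^{\frac{\alpha}{2}}u|^{2}+\tfrac{\nu}{2\sqrt2}|A^{\frac{\alpha+1}{2}}u|^{2}\le\gamma_{\alpha+1}+C_{\alpha}$, the analogue of \eqref{maineq} and \eqref{maineq2}.

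Next I would integrate this in $\rho$ from $0$ to $\sqrt2\,\delta$ along $\theta=\pm\pi/4$, using $|A^{\frac{\alpha}{2}}u(t_{0})|\le\Rt{\alpha}\nu\ko^{\alpha}$ (from \eqref{e72} at a real point), to bound $\int_{0}^{\sqrt2\delta}|A^{\frac{\alpha+1}{2}}u(t+\rho e^{\pm i\pi/4})|^{2}\,d\rho$ uniformly in $t\in\mathbb R$ -- the analogue of \eqref{M} and \eqref{ineq}; the boundary value at $\rho=0$ is the source of the $1/(\delta\nu\ko^{2})$ summand in \eqref{e7-10}. Since $A^{\frac{\alpha+1}{2}}u$ is analytic (for the Galerkin approximation) and $u$ is real on $\mathbb R$, I would then apply the mean value property on $D(t_{0},\delta)$, replace the disk by the polygon $abcdef$ and use the Schwarz reflection principle as for $Au$ in Lemma~\ref{lemma54} to turn the average into an integral over the two rays $\theta=\pm\pi/4$, apply the Cauchy--Schwarz inequality in $\rho$, and insert the ray bound just obtained. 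Collecting the three contributions together with the fixed factor $36/\pi^{2}$ coming from the polygon geometry and the dissipation constant yields $|A^{\frac{\alpha+1}{2}}u(t_{0})|\le\R{\alpha+1}\nu\ko^{\alpha+1}$ with $\R{\alpha+1}$ as in \eqref{e7-10}, uniformly in $t_{0}\in\mathbb R$ and the Galerkin level; passing to the limit gives \eqref{e71} for $\alpha+1$.

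The step I expect to be the main obstacle is the bookkeeping: keeping the split of the dissipative term between the forcing and the nonlinear term tight enough that a strictly positive multiple of $|A^{\frac{\alpha+1}{2}}u|^{2}$ survives on the left, and choosing the Young exponents and constants so that the resulting coefficient has exactly the form displayed in \eqref{e7-10}. Finally, for the ``moreover'' claim: since $\delta=\delta_{3}\le\delta_{1}=(16\cdot24^{3}c_{L}^{8}\nu\ko^{2}G^{4})^{-1}$ by \eqref{alpha11}, and $G\ge c_{L}^{-2}$ by \eqref{e314} so that $c_{L}^{8}G^{4}\ge1$, we get $\delta<(\nu\ko^{2})^{-1}$, hence $1/(\delta\nu\ko^{2})>1$; the other two summands in the bracket of \eqref{e7-10} are nonnegative and $36/\pi^{2}>1$, so that bracket exceeds $1$ and therefore $\R{\alpha+1}^{2}>\Rt{\alpha}^{2}$.
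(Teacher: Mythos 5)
Your plan follows the paper's own route almost step for step: the same inner product with $A^{\alpha}u$ along rays $t_0+\rho e^{i\theta}$, $\cos\theta\ge 1/\sqrt2$, Lemma \ref{l71} for the force, Lemma \ref{l46} for the nonlinearity, absorption of the top-order term by Young, integration over $\rho\in[0,\sqrt2\delta]$ against the inductive bounds \eqref{e71}--\eqref{e72}, and then the mean-value property on $D(t_0,\delta)$ with the polygon $abcdef$ and Schwarz reflection exactly as for $Au$ in Lemma \ref{lemma54}. Your handling of the forcing term is a harmless variant: you pair $A^{\frac{\alpha}{2}}g$ with $A^{\frac{\alpha}{2}}u$ and use Poincar\'e, whereas the paper pairs $A^{\frac{\alpha-1}{2}}g$ with $A^{\frac{\alpha+1}{2}}u$ and absorbs into the dissipation; both produce the $4/(\nu^2\ko^4\delta^2)$ summand (via $G_\alpha\le \Rt{\alpha-1}/(\nu\ko^2\delta)\le\Rt{\alpha}/(\nu\ko^2\delta)$ in the paper's version, via \eqref{e74} directly in yours). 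Your argument for $\R{\alpha+1}^2>\Rt{\alpha}^2$ is also fine and is in fact more explicit than the paper's ``easy to check.''

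The one concrete gap is the base case $\alpha=3$. The lemma is asserted for all $\alpha\ge 3$, and the induction in Section 6 starts there, but Lemma \ref{l46} is stated only for $\alpha>3$, so your estimate of $(B(u,u),A^{\alpha}u)$ is not licensed at $\alpha=3$. The paper treats this case separately: it writes $(B(u,u),A^3u)=\sum_j[(B(D_ju,u),D_jA^2u)+(B(u,D_ju),D_jA^2u)]$, integrates by parts once more, and estimates with Ladyzhenskaya to get $|(B(u,u),A^3u)|\le 16c_L^2\ko^{-1/2}|A^{\frac12}u||A^{\frac32}u|^{\frac12}|A^2u|^{\frac32}$, which after Young produces the distinct constant $\Gamma_3$ of \eqref{e711} (note that the lemma's statement explicitly references both \eqref{e711} and \eqref{gamma}, signalling the two regimes). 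You would either need to supply this separate computation or verify that the proof of Lemma \ref{l46} actually extends to $\alpha=3$; as your plan stands, the first step of the induction is unsupported.
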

\begin{proof}
	Let $t_0\in\mathbb{R}$ be arbitrary and $\rho\in [0, \sqrt{2}\delta)$.
	
	Once again, we follow the procedure outlined in Remark \ref{r52}. So, taking inner product in both sides of (\ref{annse}) with $A^{\alpha}u$, we get 
		\begin{align*}
			\frac{1}{2}\frac{d}{d\rho}|A^{\frac{\alpha}{2}}u(t_{0}+\rho e^{i\theta})|^2 &\leq |(g, A^{\alpha}u)|-\nu\cos\theta|A^{\frac{\alpha+1}{2}}u|^2 + |(B(u,u),A^{\alpha}u)|.
		\end{align*}

 	For $\alpha = 3$, since
 	 \begin{align*}
 	&(B(u, u), A^3u) = \sum_{j=1,2} [(B(D_ju, u), D_jA^2u) + (B(u, D_ju), D_jA^2u)]\\
 	&=\sum_{j=1,2} [-(B(D_jD_ju, u), A^2u) - 2(B(D_ju, D_ju), A^2u)-(B(u, D_jD_ju), A^2u)],
 \end{align*}
 we have 
  \begin{align*}
 	|(B(u, u), A^3u)| \leq& 4c_L^2|A^{\frac{1}{2}}u|^{\frac{1}{2}}|Au||A^{\frac{3}{2}}u|^{\frac{1}{2}}|A^2u|+8c_L^2|A^{\frac{1}{2}}u|^{\frac{1}{2}}|A^{\frac{3}{2}}u|^{\frac{1}{2}}|Au||A^2u|\\
 	&+4c_L^2|u|^{\frac{1}{2}}|A^{\frac{1}{2}}u|^{\frac{1}{2}}|A^{\frac{3}{2}}u|^{\frac{1}{2}}|A^2u|^{\frac{3}{2}}\nonumber\\
 		\leq& 16c_L^2|u|^{\frac{1}{2}}|A^{\frac{1}{2}}u|^{\frac{1}{2}}|A^{\frac{3}{2}}u|^{\frac{1}{2}}|A^2u|^{\frac{3}{2}}\nonumber\\
 		\leq& \frac{16c_L^2}{\ko^{1/2}}|A^{\frac{1}{2}}u||A^{\frac{3}{2}}u|^{\frac{1}{2}}|A^2u|^{\frac{3}{2}},\nonumber
 \end{align*}
	and consequently
 	\begin{align}
 	\label{e710}
 		\frac{1}{2}&\frac{d}{d\rho}|A^{\frac{3}{2}}u(t_{0}+\rho e^{i\theta})|^2+\frac{3\nu\cos\theta}{4}|A^2u|^2\leq \frac{1}{\nu\cos\theta}|Ag|^2+|(B(u,u),A^{3}u)|\\
 		&\leq \frac{|Ag|^2}{\nu\cos\theta} + \frac{1}{4}((\frac{3}{\nu\cos\theta})^{3/4}\frac{16c_L^2}{\ko^{\frac{1}{2}}}|A^{\frac{1}{2}}u||A^{\frac{3}{2}}u|^{\frac{1}{2}}|)^4 + \frac{3}{4}((\frac{\nu\cos\theta}{3})^{3/4}|A^2u|^{\frac{3}{2}})^{4/3}\nonumber\\
 		&\leq \frac{|Ag|^2}{\nu\cos\theta} + \frac{3^3\cdot 2^{\frac{31}{2}} c_L^8}{\nu^3\ko^2}|A^{\frac{1}{2}}u|^4|A^{\frac{3}{2}}u|^2+ \frac{\nu\cos\theta}{4}|A^2u|^{2}.\nonumber
 	\end{align}
	
	It follows that 
	\begin{align*}
				\frac{1}{2}\frac{d}{d\rho}|A^{\frac{3}{2}}u(t_{0}+\rho e^{i\theta})|^2&+\frac{1}{2}\nu\cos\theta|A^{2}u|^2 \leq \frac{2}{\nu\sqrt{2}}|Ag|^2+\Gamma_{3}\nu\ko^2|A^{\frac{3}{2}}{u}|^2,
			\end{align*}
where 
	\begin{equation}
	\label{e711}
		\Gamma_3 := 3^3\cdot 2^{\frac{31}{2}} c_L^8\Rt{1}^2.
	\end{equation}
	
		 For $\alpha>3$,  by Young's inequality and Lemma \ref{l46}, we obtain
			\begin{align}
			\label{e7-9}
				\frac{1}{2}\frac{d}{d\rho}|A^{\frac{\alpha}{2}}u(t_{0}+\rho e^{i\theta})|^2&+\frac{3}{4}\nu\cos\theta|A^{\frac{\alpha+1}{2}}u|^2 \\
				\leq& \frac{1}{\nu\cos\theta}|A^{\frac{\alpha-1}{2}}g|^2+{2^{\alpha+\frac{3}{2}} c_A}|u|^{\frac{1}{2}}|Au|^{\frac{1}{2}}|A^{\frac{1+\alpha}{2}}{u}||A^{\frac{\alpha}{2}}{u}|\nonumber\\
				&+{2^{\alpha+\frac{3}{2}} c_A}|A^{\frac{1}{2}}{u}|^{\frac{1}{2}}|A^{\frac{3}{2}}{u}|^{\frac{1}{2}}|A^{\frac{\alpha}{2}}{u}|^2, \nonumber
			\end{align}
			and 
			\begin{align*}
				\frac{1}{2}\frac{d}{d\rho}|A^{\frac{\alpha}{2}}u(t_{0}+\rho e^{i\theta})|^2&+\frac{1}{2}\nu\cos\theta|A^{\frac{\alpha+1}{2}}u|^2 \\
				\leq& \frac{1}{\nu\cos\theta}|A^{\frac{\alpha-1}{2}}g|^2+\frac{1}{\nu\cos\theta}\left({2^{\alpha+\frac{3}{2}} c_A}|u|^{\frac{1}{2}}|Au|^{\frac{1}{2}}|A^{\frac{\alpha}{2}}{u}|\right)^2\\
				&+{2^{\alpha+\frac{3}{2}} c_A}|A^{\frac{1}{2}}{u}|^{\frac{1}{2}}|A^{\frac{3}{2}}{u}|^{\frac{1}{2}}|A^{\frac{\alpha}{2}}{u}|^2\\
				\leq&\frac{1}{\nu\cos\theta}|A^{\frac{\alpha-1}{2}}g|^2+\left(\frac{\{2^{\alpha+\frac{3}{2}} c_A\}^2}{\nu\cos\theta}\Rt{1}\nu\Rt{2}\nu\ko^2+{2^{\alpha+\frac{3}{2}} c_A}\sqrt{\Rt{1}\nu\ko\Rt{3}\nu\ko^3}\right)|A^{\frac{\alpha}{2}}{u}|^2\\ 
				\leq& \frac{2}{\nu\sqrt{2}}|A^{\frac{\alpha-1}{2}}g|^2+\Gamma_{\alpha}\nu\ko^2|A^{\frac{\alpha}{2}}{u}|^2,
			\end{align*}
where 
	\begin{equation}
	\label{gamma}
	\Gamma_\alpha:=2^{\alpha+\frac{3}{2}} c_A[2^{\alpha+2}c_A\Rt{1}\Rt{2}+\sqrt{\Rt{1}\Rt{3}}].
	\end{equation}

	It is not hard to prove that $\Gamma_\alpha \geq 1$ for all $\alpha \geq 3$.
	
			 Since $\cos\theta\geq \frac{\sqrt{2}}{2}$, for $\theta\in[-\pi/4, \pi/4]$, we obtain 
	\begin{align} 
	\label{e714}
		\frac{1}{2}\frac{d}{d\rho}|A^\frac{\alpha}{2} u(t_{0}+\rho e^{i\theta})|^2 &+ \nu \frac{\sqrt{2}}{4}|A^{\frac{\alpha+1}{2}}u(\zeta)|^2\leq \frac{\sqrt{2}}{\nu}|A^{\frac{\alpha-1}{2}}g|^2+\nu\ko^2\Gamma_{\alpha}|A^\frac{\alpha}{2} u|^2.  
	\end{align}
  
        It follows that 
            \begin{align*}
    		\nu \frac{\sqrt{2}}{2}\int_0^{\sqrt{2}\delta}|A^{\frac{\alpha+1}{2}}u(t_{0}+\rho e^{i\theta})|^2d\rho
                \leq& |A^\frac{\alpha}{2} u(t_{0})|^2+\frac{4\delta}{\nu}|A^{\frac{\alpha-1}{2}}g|^2\\
                &+2\nu\ko^2\Gamma_{\alpha}\int_0^{\sqrt{2}\delta}|A^\frac{\alpha}{2} u(\zeta)|^2|_{\zeta=t_{0}+\rho e^{i\theta}}d\rho\\
                \leq& \R{\alpha}^2\nu^2\ko^{2\alpha}+4\delta G_\alpha^2 \nu^3\ko^{2(\alpha+1)}+2\nu\ko^2\Gamma_{\alpha}\sqrt{2}\delta\Rt{\alpha}^2\nu^2\ko^{2\alpha}
            \end{align*}
            
        and 
            \begin{align*}
    		\int_0^{\sqrt{2}\delta}|A^{\frac{\alpha+1}{2}}u(t_{0}+\rho e^{i\theta})|^2d\rho
                &\leq \sqrt{2}\R{\alpha}^2\nu\ko^{2\alpha}+4\sqrt{2}\delta G_\alpha^2 \nu^2\ko^{2(\alpha+1)}+ 4\Gamma_{\alpha}\delta\Rt{\alpha}^2\nu^2\ko^{2(\alpha+1)}\\
                &=:N_\alpha.
            \end{align*} 
            
    Since $u(\zeta)$ is $\D(A^{\frac{\alpha}{2}})_\mathbb{C}$-valued analytic in $\mathcal{S}(\delta_\alpha)$,  we obtain (as we have done in the proof of Lemma \ref{lemma54}) the following successive relations  
    \begin{align*}
        |A^{\frac{\alpha+1}{2}}u(t_0)| &= |\frac{1}{\pi\delta^2}\iint_{D(t_0,\delta)}A^{\frac{\alpha+1}{2}}u(\zeta)d\Im(\zeta)d\Re(\zeta)|\\
        &\leq \frac{1}{\pi\delta^2}\iint_{D(t_0,\delta)}|A^{\frac{\alpha+1}{2}}u(\zeta)| d\Im(\zeta)d\Re(\zeta)\\
        &\leq \frac{1}{\pi\delta^2}\iint_{abcdef}|A^{\frac{\alpha+1}{2}}u(\zeta)| d\Im(\zeta)d\Re(\zeta)\\
        &=\frac{2}{\sqrt{2}\pi\delta^2}\int_{t_0-2\delta}^{t_0+\delta}dt\int_0^{\sqrt{2}\delta}|A^{\frac{\alpha+1}{2}}u(\zeta)||_{\zeta=\tau e^{i\pi/4}+t}d\tau\\
        &\leq\frac{2}{\sqrt{2}\pi\delta^2}\int_{t_0-2\delta}^{t_0+\delta}dt\left(\int_0^{\sqrt{2}\delta}|A^{\frac{\alpha+1}{2}}u(\zeta)|^2|_{\zeta=\tau e^{i\pi/4}+t}d\tau\right)^{\frac{1}{2}}(\sqrt{2}\delta)^{\frac{1}{2}}\\
        &\leq \frac{2}{\sqrt{2}\pi\delta^2}3\delta (N_\alpha\sqrt{2}\delta)^{\frac{1}{2}}\\
        &=\frac{6\cdot 2^{\frac{1}{4}}}{\sqrt{2}\pi}(\frac{N_\alpha}{\delta})^{\frac{1}{2}},
    \end{align*}

	that is, using (\ref{e74}),
		\begin{align*}
			|A^{\frac{\alpha+1}{2}}u(t_0)|^2 &\leq \frac{18\sqrt{2}}{\pi^2}\frac{N_\alpha}{\delta}\\
			&=\frac{18\sqrt{2}}{\pi^2}\left(\frac{\sqrt{2}\R{\alpha}^2\nu\ko^{2\alpha}}{\delta}+4\sqrt{2}G_\alpha^2 \nu^2\ko^{2(\alpha+1)}+ 4\Gamma_{\alpha}\Rt{\alpha}^2\nu^2\ko^{2(\alpha+1)}\right)\\
			&\leq \frac{36}{\pi^2}\nu^2\ko^{2(\alpha+1)}\left(\frac{\R{\alpha}^2}{\delta\nu\ko^2}+4\frac{\Rt{\alpha-1}^2}{\nu^2\ko^4\delta^2}+ 2\sqrt{2}\Gamma_{\alpha}\Rt{\alpha}^2\right)\\
			&\leq \frac{36}{\pi^2}\nu^2\ko^{2(\alpha+1)}\left(\frac{1}{\delta\nu\ko^2}+\frac{4}{\nu^2\ko^4\delta^2}+ 2\sqrt{2}\Gamma_{\alpha}\right)\Rt{\alpha}^2\\
			&=:\R{\alpha+1}^2\nu^2\ko^{2(\alpha+1)},
		\end{align*}
		where
		\[
			\R{\alpha+1}^2 := \frac{36}{\pi^2}\left(\frac{1}{\delta\nu\ko^2}+\frac{4}{\nu^2\ko^4\delta^2}+ 2\sqrt{2}\Gamma_{\alpha}\right)\Rt{\alpha}^2.
		\]

		It is easy to check that
				\[
					\R{\alpha+1}>\Rt{\alpha}.
				\] 
				
 	We conclude the proof by observing that $t_0\in\mathbb{R}$ is arbitrary.
\end{proof}

We next extend the result in Lemma \ref{l72} to the strip $\mathcal{S}(\delta_\alpha)$.
\begin{lem}
\label{l73}
If $0\in\mathcal{A}$ and if the solution $u(t), t\in\mathbb{R}$ of the NSE in $\mathcal{A}$ satisfies (\ref{e71}), (\ref{e72}) for $\alpha$, then $u(\zeta)$ is a $\D(A^{\frac{\alpha+1}{2}})_\mathbb{C}$-valued analytic function, and (\ref{e72}) also holds for $\alpha+1$.  In particular, we have
	\begin{equation*}
        |A^{\frac{\alpha+1}{2}}u_\kappa(\zeta)|\leq \Rt{\alpha+1}\nu\ko^{\alpha+1},\quad\forall\  \zeta\in \mathcal{S}(\delta_\alpha),
    \end{equation*}
    where 
    \begin{equation}
    \label{e7-11}
    \Rt{\alpha+1}^2 := \beta^{\Gamma_{\alpha+1}}\frac{36\sqrt{2}}{\pi^2}	       \Gamma_{\alpha}(1+\epsilon_\alpha)\Rt{\alpha}^2,
    \end{equation}
 
 \begin{equation*}
 \beta:=e^{2\sqrt{2}\delta\nu\ko^2},
 \end{equation*} 

 \begin{equation*}
 \epsilon_\alpha=\frac{1}{2\sqrt{2}\Gamma_{\alpha}\delta\nu\ko^2}+\frac{\sqrt{2}}{\Gamma_{\alpha}\nu^2\ko^4\delta^2}+\frac{\pi^2}{72\nu^2\ko^4\delta^4\Gamma_{\alpha}\Gamma_{\alpha+1}},
 \end{equation*}
 and          $\Gamma_\alpha$ is defined in (\ref{gamma}).
 
    Moreover, the following inequality holds
    \begin{equation*}
    	\Rt{\alpha+1}^2 > \R{\alpha+1}^2.
    \end{equation*}
\end{lem}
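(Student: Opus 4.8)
The plan is to repeat the mechanism of Lemma~\ref{l72} one level higher, but to close the estimate with a Gronwall inequality along a ray issuing from the real axis rather than with the polygon/mean‑value argument used there. The latter is unavailable on the strip: the disk about a point $\zeta$ approaching $\partial\mathcal{S}(\delta)$ has radius shrinking to $0$, so a Cauchy/mean‑value estimate degenerates there, whereas propagating along a ray from a \emph{real} base point — where Lemma~\ref{l72} already supplies a bound — does not. Note also that $g$ is fixed, so the control $G_{\alpha+1}\le\Rt{\alpha}/(\nu\ko^2\delta)$ of Lemma~\ref{l71} may be used for any solution in $\mathcal{A}$.

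First I would follow Remark~\ref{r52} with $\alpha$ replaced by $\alpha+1$: fix $t_0\in\bR$ and $\theta\in[-\pi/4,\pi/4]$ and take the inner product of \eqref{annse} with $A^{\alpha+1}u(t_0+\rho e^{i\theta})$. Since $\alpha\ge 3$ gives $\alpha+1>3$, Lemma~\ref{l46} applies with exponent $\alpha+1$; one Young's inequality absorbs the top‑order term $|A^{\frac{\alpha+2}{2}}u|$ into the dissipation $\nu(\cos\theta)|A^{\frac{\alpha+2}{2}}u|^2$, and the surviving factors $|u|$, $|Au|$, $|A^{1/2}u|$, $|A^{3/2}u|$ are replaced by the already‑established $\alpha=1,2,3$ bounds ($\R{1}$, $\Rt{1}$, $\Rt{2}$, $\Rt{3}$ from Lemmas~\ref{lemma54}--\ref{lemma59}). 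This reproduces the chain \eqref{e7-9}--\eqref{e714} shifted up by one, namely, after dropping the dissipative term and using $\cos\theta\ge\sqrt2/2$,
\begin{equation*}
\frac{d}{d\rho}|A^{\frac{\alpha+1}{2}}u(t_0+\rho e^{i\theta})|^2
\le \frac{2\sqrt 2}{\nu}\,|A^{\frac{\alpha}{2}}g|^2 + 2\,\Gamma_{\alpha+1}\,\nu\ko^2\,|A^{\frac{\alpha+1}{2}}u|^2 ,
\end{equation*}
with $\Gamma_{\alpha+1}$ defined as in \eqref{gamma}.

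Next I would integrate by Gronwall's lemma on $0\le\rho\le\sqrt 2\,\delta$; writing $\beta=e^{2\sqrt 2\,\delta\nu\ko^2}$ this gives
\begin{equation*}
|A^{\frac{\alpha+1}{2}}u(t_0+\rho e^{i\theta})|^2
\le \beta^{\Gamma_{\alpha+1}}|A^{\frac{\alpha+1}{2}}u(t_0)|^2
 + \frac{\sqrt 2\,(\beta^{\Gamma_{\alpha+1}}-1)}{\Gamma_{\alpha+1}\nu^2\ko^2}\,|A^{\frac{\alpha}{2}}g|^2 .
\end{equation*}
Into this I would substitute $|A^{\frac{\alpha+1}{2}}u(t_0)|^2\le\R{\alpha+1}^2\nu^2\ko^{2(\alpha+1)}$ from Lemma~\ref{l72} and $|A^{\frac{\alpha}{2}}g|=G_{\alpha+1}\nu^2\ko^{\alpha+2}\le\frac{\Rt{\alpha}}{\delta}\nu\ko^{\alpha}$ from Lemma~\ref{l71}, then insert the explicit form \eqref{e7-10} of $\R{\alpha+1}^2$, pull out the dominant factor $\frac{36\sqrt 2}{\pi^2}\Gamma_\alpha\Rt{\alpha}^2$ (using $\Gamma_\alpha\ge1$ to absorb the subordinate pieces into $\epsilon_\alpha$), and collect everything over the common scale $\nu^2\ko^{2(\alpha+1)}$; this delivers $|A^{\frac{\alpha+1}{2}}u(t_0+\rho e^{i\theta})|^2\le\Rt{\alpha+1}^2\nu^2\ko^{2(\alpha+1)}$ with $\Rt{\alpha+1}$, $\beta$, $\epsilon_\alpha$ as in \eqref{e7-11}. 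Since every $\zeta\in\mathcal{S}(\delta)$ equals $t_0+\rho e^{i\theta}$ for $t_0=\Re\zeta-|\Im\zeta|\in\bR$, $\theta=\pm\pi/4$ (the sign of $\Im\zeta$), $\rho=\sqrt 2\,|\Im\zeta|<\sqrt 2\,\delta$, the bound holds on the whole strip; the $\D(A^{\frac{\alpha+1}{2}})_\mathbb{C}$‑analyticity of $u(\zeta)$ then follows from the Galerkin scheme of Remark~\ref{r52}, for which this uniform a priori bound is exactly the missing input. Finally, $\Rt{\alpha+1}^2>\R{\alpha+1}^2$ is a direct comparison of the two multiples of $\Rt{\alpha}^2$: $\beta^{\Gamma_{\alpha+1}}\ge1$, and already the $\delta^{-4}$ term in $\epsilon_\alpha$ (as $\delta=\delta_3$ is extremely small) — or, for large $\alpha$, the doubly‑exponential growth of $\beta^{\Gamma_{\alpha+1}}$ — overwhelms the polynomial $\frac1{\delta\nu\ko^2}+\frac4{\nu^2\ko^4\delta^2}+2\sqrt2\,\Gamma_\alpha$ appearing in $\R{\alpha+1}^2$.

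I expect the genuine difficulty to lie not in the analysis — Gronwall, Lemma~\ref{l46}, and Remark~\ref{r52} are all in hand — but in the constant‑tracking of the collection step: one must carry the numerical factors and the powers of $\delta$, $\nu$, $\ko$, $\Gamma_\alpha$, $\Gamma_{\alpha+1}$ through the interplay of the Gronwall amplification $\beta^{\Gamma_{\alpha+1}}$, the formula \eqref{e7-10} for $\R{\alpha+1}^2$, and the bound $G_{\alpha+1}\le\Rt{\alpha}/(\nu\ko^2\delta)$, so that they coalesce precisely into \eqref{e7-11}; a second, minor point is to verify that the real base point $t_0$ reaches every $\zeta$ in the strip, which is handled by the $\theta=\pm\pi/4$ choice above.
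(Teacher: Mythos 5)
Your proposal is correct and follows essentially the same route as the paper: the inner product with $A^{\alpha+1}u$ combined with Lemma \ref{l46} and Young's inequality yields exactly the differential inequality \eqref{e7-7}, which the paper then also closes by Gronwall along rays $t_0+\rho e^{i\theta}$, $|\theta|\le\pi/4$, from real base points, inserting the real-axis bound $\R{\alpha+1}$ from Lemma \ref{l72} and the bound on $|A^{\frac{\alpha}{2}}g|$ from Lemma \ref{l71} before collecting constants into $\Rt{\alpha+1}$. The only cosmetic difference is your justification of $\Rt{\alpha+1}^2>\R{\alpha+1}^2$, which can be read off immediately from the collected form $\Rt{\alpha+1}^2=\beta^{\Gamma_{\alpha+1}}\bigl(\R{\alpha+1}^2+\tfrac{\sqrt{2}}{\nu^2\ko^4\delta^2\Gamma_{\alpha+1}}\Rt{\alpha}^2\bigr)$ with $\beta>1$, rather than by comparing growth rates.
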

 \begin{proof} 
	Let $t_0\in\mathbb{R}$ be arbitrary and $\rho\in [0, \sqrt{2}\delta)$.
	By virtue of Remarks \ref{r52} and \ref{ra4}, we can assume that $u(\zeta)$ is $\D(A^{\frac{\alpha+1}{2}})_{\mathbb{C}}$-valued analytic. 
	Taking the inner product  of (\ref{comnseq}) with $A^{\alpha+1}u$, as in the proof of Lemma \ref{l72}, we get 

	\begin{align}
	\label{e7-7}
		\frac{1}{2}\frac{d}{d\rho}|A^\frac{\alpha+1}{2} u(t_{0}+\rho e^{i\theta})|^2 &+ \nu \frac{\sqrt{2}}{4}|A^{\frac{\alpha+2}{2}}u(\zeta)|^2\\
		&\leq \frac{\sqrt{2}}{\nu}|A^{\frac{\alpha}{2}}g|^2+\nu\ko^2\Gamma_{\alpha+1}|A^\frac{\alpha+1}{2} u|^2, \nonumber
	\end{align}
  where the Lemma \ref{l71} is used.
  
 It follows that
						\begin{align*}
								\frac{d}{d\rho}|A^\frac{\alpha+1}{2} u(t_0+\rho e^{i\theta})|^2 \leq \frac{4}{\nu\sqrt{2}}|A^{\frac{\alpha}{2}}g|^2+2\nu\ko^2\Gamma_{\alpha+1}|A^\frac{\alpha+1}{2} u|^2.
						\end{align*}
						
 	Since $\rho\in[0, \sqrt{2}\delta)$, we have, by (\ref{e74}),
						\begin{align*}
								|A^\frac{\alpha+1}{2} u(\zeta)|^2|_{\zeta=t_0+\rho e^{i\theta}}&\leq e^{2\nu\ko^2\Gamma_{\alpha+1}\rho}|A^\frac{\alpha+1}{2} u(t_0)|^2+\frac{\frac{4}{\nu\sqrt{2}}|A^{\frac{\alpha}{2}}g|^2}{2\nu\ko^2\Gamma_{\alpha+1}}(e^{2\nu\ko^2\Gamma_{\alpha+1}\rho}-1)\\
								&\leq e^{2\nu\ko^2\Gamma_{\alpha+1}\rho}\left[|A^\frac{\alpha+1}{2} u(t_0)|^2+\frac{\sqrt{2}|A^{\frac{\alpha}{2}}g|^2}{\nu^2\ko^2\Gamma_{\alpha+1}}\right]\\
								&\leq e^{2\sqrt{2}\delta\nu\ko^2\Gamma_{\alpha+1}}\left[
								\R{\alpha+1}^2\nu^2
								\ko^{2(\alpha+1)}+\frac{\sqrt{2}}{\Gamma_{\alpha+1}}G^2_{\alpha+1}\nu^2\ko^{2\alpha+2}\right]\\
								&\leq \beta^{\Gamma_{\alpha+1}}\left\{\frac{36}{\pi^2}\left[\frac{1}{\delta\nu\ko^2}+\frac{4}{\nu^2\ko^4\delta^2}+ 2\sqrt{2}\Gamma_{\alpha}\right]+\frac{\sqrt{2}}{\nu^2\ko^4\delta^2\Gamma_{\alpha+1}}\right\}\Rt{\alpha}^2\nu^2\ko^{2\alpha+2}\\
								&=\beta^{\Gamma_{\alpha+1}}\frac{72\sqrt{2}}{\pi^2}\Gamma_{\alpha}(1+\epsilon_\alpha)\Rt{\alpha}^2\nu^2\ko^{2\alpha+2}\\
								&=:\Rt{\alpha+1}^2\nu^2\ko^{2\alpha+2},
						\end{align*}
						where 
						\begin{equation}
						\label{ee721}
						\Rt{\alpha+1}^2:=\beta^{\Gamma_{\alpha+1}}\frac{72\sqrt{2}}{\pi^2}\Gamma_{\alpha}(1+\epsilon_\alpha)\Rt{\alpha}^2.
						\end{equation}
\end{proof}

While (\ref{ee721}) shows that $\Rt{\alpha}$ increases with $\alpha$, the next result provides an explicit upper bound.
\begin{prop}
\label{c73}
	For $\alpha>3$,
	\begin{equation*}
		\Rt{\alpha+1}^2\leq C(g){\beta_1}^{4^{\alpha+1}}\beta_2^{(\alpha+1)^2+\frac{9}{2}(\alpha+1)},
	\end{equation*}
	where 
	\begin{align*}
	&C_1:=\prod_{\alpha=3}^{\infty}(1+\epsilon_\alpha), \quad C_3:={4}[2^{\frac{5}{2}}c_A^2\Rt{1}\Rt{2}+2^{\frac{1}{2}}c_A\sqrt{\Rt{1}\Rt{3}}],\\
	&C_2:=3^32^{-7}c_L^8\Rt{1}^2\prod_{\gamma=3}^{\infty}(1+\eta_\gamma), \quad \eta_\gamma=\frac{\sqrt{\Rt{1}\Rt{3}}}{2^{\gamma+2}c_A\Rt{1}\Rt{2}},\\
	&\beta_1:=\beta^{C_3}, \quad \beta_2:=\max\{\frac{72\sqrt{2}}{\pi^2},c_A^2\Rt{1}\Rt{2}\},
	\end{align*}
	and 
	\begin{equation*}
		C(g):=C_1C_2\Rt{3}^2\beta_2^{-19/2}.
	\end{equation*}
\end{prop}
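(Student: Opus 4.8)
The plan is to unwind the one-step recursion (\ref{ee721}) of Lemma~\ref{l73},
\[
\Rt{\gamma+1}^2 \;=\; \beta^{\Gamma_{\gamma+1}}\,\frac{72\sqrt{2}}{\pi^2}\,\Gamma_\gamma\,(1+\epsilon_\gamma)\,\Rt{\gamma}^2,\qquad \gamma\geq 3,
\]
all the way down to the base value $\Rt{3}^2$. Iterating it gives, for every $\alpha\geq 3$, the closed product
\[
\Rt{\alpha+1}^2 \;=\; \Rt{3}^2\Big(\tfrac{72\sqrt{2}}{\pi^2}\Big)^{\alpha-2}\beta^{\sum_{\gamma=4}^{\alpha+1}\Gamma_\gamma}\Big(\prod_{\gamma=3}^{\alpha}\Gamma_\gamma\Big)\Big(\prod_{\gamma=3}^{\alpha}(1+\epsilon_\gamma)\Big),
\]
so the proposition reduces to bounding these four $\alpha$-dependent factors and sweeping every $\alpha$-independent quantity into $C(g)$.

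For the exponent of $\beta$ I would use the factorization $\Gamma_\gamma = 2^{2\gamma+7/2}c_A^2\Rt{1}\Rt{2}(1+\eta_\gamma)$ with $\eta_\gamma=\sqrt{\Rt{1}\Rt{3}}/(2^{\gamma+2}c_A\Rt{1}\Rt{2})$, i.e.\ $\Gamma_\gamma = 2^{7/2}c_A^2\Rt{1}\Rt{2}\,4^\gamma + 2^{3/2}c_A\sqrt{\Rt{1}\Rt{3}}\,2^\gamma$. Summing the two geometric series, $\sum_{\gamma=4}^{\alpha+1}4^\gamma<\tfrac{4}{3}4^{\alpha+1}$ and $\sum_{\gamma=4}^{\alpha+1}2^\gamma<4^{\alpha+1}$, yields $\sum_{\gamma=4}^{\alpha+1}\Gamma_\gamma\leq C_3\,4^{\alpha+1}$ with $C_3$ precisely as stated, hence $\beta^{\sum_{\gamma=4}^{\alpha+1}\Gamma_\gamma}\leq(\beta^{C_3})^{4^{\alpha+1}}=\beta_1^{\,4^{\alpha+1}}$.

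For the remaining factors I would split $\prod_{\gamma=3}^{\alpha}\Gamma_\gamma=\Gamma_3\prod_{\gamma=4}^{\alpha}\Gamma_\gamma$, where $\Gamma_3=3^3 2^{31/2}c_L^8\Rt{1}^2$ comes from the separate formula (\ref{e711}), and, from the factorization above, $\prod_{\gamma=4}^{\alpha}\Gamma_\gamma=(c_A^2\Rt{1}\Rt{2})^{\alpha-3}\,2^{\sum_{\gamma=4}^{\alpha}(2\gamma+7/2)}\prod_{\gamma=4}^{\alpha}(1+\eta_\gamma)$ with $\sum_{\gamma=4}^{\alpha}(2\gamma+7/2)=\alpha^2+\tfrac{9}{2}\alpha-\tfrac{45}{2}$. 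The infinite products $\prod_{\gamma=3}^{\infty}(1+\epsilon_\gamma)=C_1$ and $\prod_{\gamma=3}^{\infty}(1+\eta_\gamma)$ (the latter built into $C_2$) converge because $\eta_\gamma$ decays like $2^{-\gamma}$ and $\epsilon_\gamma=O(\Gamma_\gamma^{-1})=O(4^{-\gamma})$ from its definition, so $\sum\eta_\gamma<\infty$ and $\sum\epsilon_\gamma<\infty$; bounding the partial products by these limits turns $\Gamma_3\prod_{\gamma=4}^{\infty}(1+\eta_\gamma)$ into $2^{45/2}C_2$ and pulls out the factor $C_1$. It then remains to observe that, since $2\leq\tfrac{72\sqrt{2}}{\pi^2}\leq\beta_2$ and $c_A^2\Rt{1}\Rt{2}\leq\beta_2$, after the $2^{\pm45/2}$ cancel what is left satisfies $2^{\alpha^2+\frac{9}{2}\alpha}\bigl(\tfrac{72\sqrt{2}}{\pi^2}\bigr)^{\alpha-2}(c_A^2\Rt{1}\Rt{2})^{\alpha-3}\leq\beta_2^{\,\alpha^2+\frac{13}{2}\alpha-5}\leq\beta_2^{\,(\alpha+1)^2+\frac{9}{2}(\alpha+1)-19/2}$; assembling the pieces and carrying $\Rt{3}^2$ along produces exactly $\Rt{\alpha+1}^2\leq C_1C_2\Rt{3}^2\beta_2^{-19/2}\,\beta_1^{\,4^{\alpha+1}}\beta_2^{(\alpha+1)^2+\frac{9}{2}(\alpha+1)}=C(g)\,\beta_1^{\,4^{\alpha+1}}\beta_2^{(\alpha+1)^2+\frac{9}{2}(\alpha+1)}$.

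The engine is the already-established recursion (\ref{ee721}), so there is no genuine conceptual obstacle; the work is arithmetic, and what demands care is reproducing every constant in the statement exactly: (i) carrying out the geometric-series estimate so that the emerging coefficient is precisely $C_3$, with the subdominant $2^\gamma$-term kept from spoiling it; (ii) evaluating $\sum_{\gamma=4}^{\alpha}(2\gamma+7/2)$ and checking that, together with the $(\alpha-2)$ and $(\alpha-3)$ exponents, it fits under $(\alpha+1)^2+\tfrac{9}{2}(\alpha+1)-\tfrac{19}{2}$ once $2$, $\tfrac{72\sqrt{2}}{\pi^2}$ and $c_A^2\Rt{1}\Rt{2}$ are each dominated by $\beta_2$; and (iii) verifying convergence and uniform bounds for the two infinite products and confirming that $\Gamma_3$ combines with $\prod(1+\eta_\gamma)$ to reproduce $C_2$ up to the clean $2^{45/2}$ factor. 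Each item is routine, but matching the constants on the nose requires being scrupulous.
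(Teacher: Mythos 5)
Your proposal is correct and follows essentially the same route as the paper: unwind the recursion \eqref{ee721} to a closed product, bound $\sum_{\gamma=4}^{\alpha+1}\Gamma_\gamma$ by geometric series to obtain $C_3 4^{\alpha+1}$, factor $\prod\Gamma_\gamma$ via $\Gamma_\gamma=2^{2\gamma+7/2}c_A^2\Rt{1}\Rt{2}(1+\eta_\gamma)$ with $\sum_{\gamma=4}^{\alpha}(2\gamma+\tfrac72)=\alpha^2+\tfrac92\alpha-\tfrac{45}{2}$, absorb the infinite products into $C_1,C_2$, and dominate the remaining $\alpha$-dependent factors by $\beta_2^{\alpha^2+\frac{13}{2}\alpha-5}\leq\beta_2^{(\alpha+1)^2+\frac92(\alpha+1)-\frac{19}{2}}$. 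All constants check out against the paper's computation.
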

\begin{proof}
	Since $\sum_{\alpha=3}^{\infty}\epsilon_\alpha$ is convergent, we have $C_1:=\prod_{\alpha=3}^{\infty}(1+\epsilon_\alpha)<\infty$.
	 Due to the definition of $\Gamma_\alpha$ in (\ref{gamma}), we have
				\begin{align*}
					\prod_{\gamma=3}^{\alpha}\Gamma_\gamma&= 3^32^{\frac{31}{2}}c_L^8\Rt{1}^2\prod_{\gamma=4}^{\alpha}2^{2\gamma+7/2}(c_A^2\Rt{1}\Rt{2})\left(1+\frac{\sqrt{\Rt{1}\Rt{3}}}{2^{\gamma+2}c_A\Rt{1}\Rt{2}}\right)\\
					&=: 3^32^{\frac{31}{2}}c_L^8\Rt{1}^2\prod_{\gamma=4}^{\alpha}2^{2\gamma+7/2}(c_A^2\Rt{1}\Rt{2})[1+\eta_\gamma]\\
					&= 3^32^{\frac{31}{2}}c_L^8\Rt{1}^22^{\alpha^2+\frac{9}{2}\alpha-\frac{45}{2}}(c_A^2\Rt{1}\Rt{2})^{\alpha-3}\prod_{\gamma=4}^{\alpha}[1+\eta_\gamma]\\
					&<3^32^{-7}c_L^8\Rt{1}^22^{\alpha^2+\frac{9}{2}\alpha}(c_A^2\Rt{1}\Rt{2})^{\alpha-3}\prod_{\gamma=4}^{\infty}[1+\eta_\gamma]\\
					&=:2^{\alpha^2+\frac{9}{2}\alpha}(c_A^2\Rt{1}\Rt{2})^{\alpha-3}C_2,
				\end{align*}
				and
				\begin{align*}
					\sum_{\gamma=4}^{\alpha+1}\Gamma_\gamma&= \sum_{\gamma=4}^{\alpha+1}[2^{2\gamma+7/2}c_A^2\Rt{1}\Rt{2}+2^{\gamma+\frac{3}{2}}c_A\sqrt{\Rt{1}\Rt{3}}]\\
					&=2^{7/2}c_A^2\Rt{1}\Rt{2}\sum_{\gamma=4}^{\alpha+1}2^{2\gamma}+2^{\frac{3}{2}}c_A\sqrt{\Rt{1}\Rt{3}}\sum_{\gamma=4}^{\alpha+1}2^\gamma\\
					&= 2^{7/2}c_A^2\Rt{1}\Rt{2}\frac{4^{\alpha+2}-4^4}{3}+2^{\frac{3}{2}}c_A(2^{\alpha+2}-2^4)\sqrt{\Rt{1}\Rt{3}}\\
					&\leq \left(2^{\frac{5}{2}}c_A^2\Rt{1}\Rt{2}+2^{\frac{1}{2}}c_A\sqrt{\Rt{1}\Rt{3}}\right){4^{\alpha+2}}\\
					&=:C_34^{\alpha+1}.
				\end{align*}
		 It follows from the recursion relation (\ref{ee721}) that
				\begin{align*}
					\Rt{\alpha+1}^2&:=\beta^{\Gamma_{\alpha+1}}\frac{72\sqrt{2}}{\pi^2}\Gamma_{\alpha}(1+\epsilon_\alpha)\Rt{\alpha}^2\\
					&=\beta^{\sum_{\gamma=4}^{\alpha+1}\Gamma_\gamma}(\frac{72\sqrt{2}}{\pi^2})^{\alpha-2}\prod_{\gamma=3}^\alpha\Gamma_\gamma \prod_{\gamma=3}^\alpha(1+\epsilon_\gamma)\Rt{3}^2\\
					&\leq\beta^{C_34^{\alpha+1}}(\frac{72\sqrt{2}}{\pi^2})^{\alpha-2}2^{\alpha^2+\frac{9}{2}\alpha}(c_A^2\Rt{1}\Rt{2})^{\alpha-3}C_2C_1\Rt{3}^2\\
					&\leq \beta^{C_34^{\alpha+1}}C_2C_1\Rt{3}^2\max\{\frac{72\sqrt{2}}{\pi^2}, 2, c_A^2\Rt{1}\Rt{2}\}^{\alpha^2+\frac{13}{2}\alpha-5}\\
					&=: C(g){\beta_1}^{4^{\alpha+1}}\beta_2^{(\alpha+1)^2+\frac{9}{2}(\alpha+1)}.
				\end{align*}
\end{proof}
\begin{remark}
\label{r75}
	Theorem \ref{t2} is now a direct consequence of Lemmas \ref{lemma54}, \ref{lemma58}, \ref{lemma59}, \ref{l73} and Proposition \ref{c73}.
\end{remark}

\begin{remark}
\label{r76}
	The explicit upper bound of $\tilde{R}_{\alpha}$ given in Proposition \ref{c73} provides information about the possible dependence of $\tilde{R}_{\alpha}$ upon $\alpha$; On the other hand, $\delta_{\alpha}=\delta_3$, for all $\alpha >3$.
\end{remark}

\section{The Class $\mathcal{C}(\sigma)$}
The estimates for $\{|A^{\frac{\alpha}{2}}u(t)|, t\in\mathbb{R}\}$ can be slightly improved by shrinking the width $\delta_\alpha$ of the strip $\mathcal{S}(\delta_\alpha)$ in the induction argument for $\alpha>3$.  
\begin{thm}
\label{t714}
	Let $0\in \mathcal{A}$ and let 
	\begin{equation*}
	\delta_{\alpha+1} :=\frac{\delta_{\alpha}}{2}
	\end{equation*}
	 for $\alpha\in\mathbb{N}, \alpha\geq 3$. Then for any solution in $u(t)\in\mathcal{A}, t\in\mathbb{R}$,  one has
	 \begin{equation}
	 \label{e82}
	 	|A^{\frac{\alpha+1}{2}}u(\zeta)|\leq \Rt{\alpha+1}\nu\ko^{\alpha+1}, \quad\forall\  \zeta\in \mathcal{S}(\delta_{\alpha+1}),
	 \end{equation}
	 for all $\alpha\geq 3$, where the constants $\Rt{\alpha+1}, \alpha \geq 3$, are redefined in the following way
	\begin{equation}
	\label{e83}
		\Rt{\alpha+1}^2 := \frac{1024\sqrt{2}}{\pi^2}\Gamma_{\alpha}(1+\xi_\alpha)\Rt{\alpha}^2,
	\end{equation}
	with
	\begin{equation*}
	\xi_\gamma = \frac{1}{4\sqrt{2}\nu\ko^2\delta_{\alpha+1}\Gamma_\alpha}+\frac{1}{\sqrt{2}\nu^2\ko^4\delta_\alpha\delta_{\alpha+1}\Gamma_\alpha}.
	\end{equation*}
	Furthermore, we have the following estimate
	\begin{equation}
	\label{e84}
		\Rt{\alpha+1}^2\leq \tilde{C}(g)\beta_3^{\frac{3}{2}(\alpha+1)^2},
	\end{equation}	
	where \\
	\begin{equation*}
	\beta_3:=\max(\frac{1024\sqrt{2}}{\pi^2}, c_A^2\Rt{1}\Rt{2}), \quad \tilde{C}(g):=C_2C_4\Rt{3}^2\beta_3^{-3/8},
	\end{equation*}
	and
	\begin{equation*}
	C_4:=\prod_{\gamma=3}^{\infty}(1+\xi_\gamma).
	\end{equation*} 
\end{thm}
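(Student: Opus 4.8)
My plan is to run an induction on $\alpha\ge 3$ that follows Lemmas~\ref{l71}--\ref{l73} and Proposition~\ref{c73}, with one structural change: the Gronwall extension of Lemma~\ref{l73} --- which is exactly what produces the doubly‑exponential factor $\beta^{\Gamma_{\alpha+1}}$ in \eqref{ee721} --- is replaced by a disk/sector mean‑value argument on a strip whose half‑width is halved at each stage, so $\delta_\alpha=\delta_3\,2^{-(\alpha-3)}$. The base case $\alpha=3$ is Lemma~\ref{lemma59}. Assume \eqref{e82} holds at level $\alpha$. Just as in Lemma~\ref{l71}, evaluating the complexified NSE at a real point and using the Cauchy integral formula on a circle of radius $\delta_{\alpha-1}$ gives $g\in\D(A^{(\alpha-1)/2})$ with $|A^{(\alpha-1)/2}g|\le\Rt{\alpha-1}\nu\ko^{\alpha-1}/\delta_{\alpha-1}$; and since $\bR\subset\mathcal{S}(\delta_\alpha)$, \eqref{e82} also bounds $|A^{\alpha/2}u(t)|$ on $\bR$, so --- unlike in Lemma~\ref{l72} --- no separate real‑axis constants $\R{\alpha}$ need be carried.

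For the inductive step I would first reproduce the ray inequality \eqref{e714} (and, for $\alpha=3$, its analogue via \eqref{e711}), obtained by taking the inner product of \eqref{annse} with $A^{\alpha}u$, bounding the trilinear term by Lemma~\ref{l46}, and applying Young's inequality as in the proof of Lemma~\ref{l72}:
\begin{equation*}
\tfrac12\,\tfrac{d}{d\rho}\,|A^{\alpha/2}u|^{2}+\tfrac{\sqrt2}{4}\,\nu\,|A^{(\alpha+1)/2}u|^{2}\ \le\ \tfrac{\sqrt2}{\nu}\,|A^{(\alpha-1)/2}g|^{2}+\nu\ko^{2}\,\Gamma_{\alpha}\,|A^{\alpha/2}u|^{2},
\end{equation*}
along $\zeta=t_0+\rho e^{i\theta}$, $|\theta|\le\pi/4$, with $\Gamma_\alpha$ from \eqref{gamma}. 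Integrating over $0\le\rho\le\sqrt2\,\delta_\alpha=2\sqrt2\,\delta_{\alpha+1}$ and inserting \eqref{e82} together with the bound on $g$ yields a ray estimate $\int_0^{\sqrt2\delta_\alpha}|A^{(\alpha+1)/2}u|^2\,d\rho\le N_\alpha$, with $N_\alpha$ the sum of a term $\propto\Rt\alpha^2$, a term $\propto\delta_\alpha\Rt{\alpha-1}^2/(\nu^2\ko^4\delta_{\alpha-1}^2)$, and the dominant term $\propto\Gamma_\alpha\delta_\alpha\Rt\alpha^2$; that $u$ is genuinely $\D(A^{(\alpha+1)/2})_{\mathbb C}$‑valued analytic on $\mathcal{S}(\delta_{\alpha+1})$ is supplied by the Galerkin scheme (Remark~\ref{r52} and the Appendix), for which these a priori bounds are exactly what is needed. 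The key move is to convert this ray estimate into a pointwise bound on the \emph{smaller} strip $\mathcal{S}(\delta_{\alpha+1})$ without Gronwall: for $\zeta\in\mathcal{S}(\delta_{\alpha+1})$ pick the real $t_0$ so that $D(\zeta,r)$, $r=(2\sqrt2-\sqrt5)\delta_{\alpha+1}$, lies in the sector $\{t_0+\rho e^{i\theta}:|\theta|\le\pi/4,\ 0\le\rho\le2\sqrt2\delta_{\alpha+1}\}$ --- the very configuration used for $\alpha=3$ before Lemma~\ref{lemma59}, available here precisely because $\delta_{\alpha+1}=\delta_\alpha/2$ --- and apply the mean‑value property of $A^{(\alpha+1)/2}u$ on $D(\zeta,r)$, enlarge to the sector, and use Cauchy--Schwarz in $\rho$:
\begin{equation*}
|A^{(\alpha+1)/2}u(\zeta)|\ \le\ \frac{1}{\pi r^{2}}\int_{-\pi/4}^{\pi/4}\Bigl(\int_{0}^{2\sqrt2\delta_{\alpha+1}}|A^{(\alpha+1)/2}u|^{2}d\rho\Bigr)^{1/2}\Bigl(\int_{0}^{2\sqrt2\delta_{\alpha+1}}\rho^{2}d\rho\Bigr)^{1/2}d\theta .
\end{equation*}
Squaring gives $|A^{(\alpha+1)/2}u(\zeta)|^{2}\le C\,N_\alpha/\delta_{\alpha+1}$ for an absolute $C$; substituting $N_\alpha$, using $\delta_{\alpha-1}=4\delta_{\alpha+1}$ and $\Rt{\alpha-1}\le\Rt\alpha$, the three contributions reorganize into $\frac{1024\sqrt2}{\pi^{2}}\Gamma_\alpha(1+\xi_\alpha)\Rt\alpha^{2}$, the ``$1$'' being the dominant term and $\xi_\alpha$ collecting the one carrying $1/(\nu\ko^2\delta_{\alpha+1}\Gamma_\alpha)$ and the one carrying $1/(\nu^2\ko^4\delta_\alpha\delta_{\alpha+1}\Gamma_\alpha)$. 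As $\zeta$ (hence $t_0$) is arbitrary, this is \eqref{e82}--\eqref{e83} at level $\alpha+1$. (Against \eqref{ee721}, the mean‑value step costs only the geometric factor $1/\delta_{\alpha+1}^{2}=4/\delta_\alpha^{2}$ --- whence the change of numerical constant and of correction term --- rather than the exponential $\beta^{\Gamma_{\alpha+1}}$.)

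For the closed‑form bound \eqref{e84} I would iterate \eqref{e83} from $\alpha=3$:
\begin{equation*}
\Rt{\alpha+1}^{2}=\Bigl(\tfrac{1024\sqrt2}{\pi^{2}}\Bigr)^{\alpha-2}\Bigl(\prod_{\gamma=3}^{\alpha}\Gamma_\gamma\Bigr)\Bigl(\prod_{\gamma=3}^{\alpha}(1+\xi_\gamma)\Bigr)\Rt{3}^{2}.
\end{equation*}
Here $\prod_{\gamma\ge3}(1+\xi_\gamma)=C_4<\infty$ because $\Gamma_\gamma\sim4^{\gamma}$ (leading part $2^{2\gamma+7/2}c_A^{2}\Rt1\Rt2$) while $\delta_\gamma^{-1}\sim2^{\gamma}$, so the summands of $\xi_\gamma$ decay geometrically; and $\prod_{\gamma=3}^{\alpha}\Gamma_\gamma\le2^{\alpha^{2}+\frac92\alpha}(c_A^{2}\Rt1\Rt2)^{\alpha-3}C_2$, bounded exactly as in the proof of Proposition~\ref{c73} by writing $\Gamma_\gamma=2^{2\gamma+7/2}c_A^{2}\Rt1\Rt2(1+\eta_\gamma)$ with $\sum\eta_\gamma<\infty$. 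Since $\tfrac{1024\sqrt2}{\pi^{2}}$, $c_A^{2}\Rt1\Rt2$ and $2$ are each $\le\beta_3$, these three exponential‑in‑$\alpha$ factors combine into $\beta_3^{\,\alpha^{2}+O(\alpha)}$, and the elementary inequality $\alpha^{2}+\tfrac{13}{2}\alpha-5\le\tfrac32(\alpha+1)^{2}$ for $\alpha>3$ (the difference is a quadratic with negative discriminant) absorbs the remaining slack; packaging $C_2C_4\Rt3^{2}$ and that slack into $\tilde C(g)=C_2C_4\Rt3^{2}\beta_3^{-3/8}$ yields \eqref{e84}.

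The only genuinely non‑routine ingredient is the key move above: recognizing that, once one halves the strip width at each stage, the pointwise bound on $\mathcal{S}(\delta_{\alpha+1})$ can be read off \emph{directly} from the ray‑integrated differential inequality by the disk/sector mean‑value device, so that Gronwall --- and the doubly‑exponential growth of $\Rt\alpha$ it forces --- is never invoked. The remaining point that needs real care is checking that the halving is fast enough against the $4^{\gamma}$‑growth of $\Gamma_\gamma$ to keep $\sum_\gamma\xi_\gamma$ (equivalently $C_4$) under control; everything else is a transcription of the material of Section~\ref{s-6} and Lemmas~\ref{l71}--\ref{l73} with the fixed $\delta$ replaced by the sequence $\delta_\alpha$ and the resulting powers of $\delta_\alpha$ carried through.
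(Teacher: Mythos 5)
Your proposal follows the paper's own route essentially verbatim: the paper's proof of Theorem \ref{t714} consists of the two remarks ``as done in the proof of Lemma \ref{lemma59}\dots'' and ``as in the proof of Proposition \ref{c73}\dots'', and what you have written is precisely the expansion of those two references --- the ray inequality \eqref{e714} integrated over $0\le\rho\le\sqrt2\,\delta_\alpha$, the disk/sector mean--value device with $r=(2\sqrt2-\sqrt5)\delta_{\alpha+1}$ replacing the Gronwall step of Lemma \ref{l73}, and the iteration of \eqref{e83} together with the bound on $\prod_{\gamma=3}^{\alpha}\Gamma_\gamma$ from Proposition \ref{c73}. The structure, the base case, and the bookkeeping all match.

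One caveat on the point you yourself single out as the delicate one. Your justification that ``the summands of $\xi_\gamma$ decay geometrically'' is wrong for the second summand: since $\delta_\gamma=\delta_3 2^{-(\gamma-3)}$, one has $(\delta_\gamma\delta_{\gamma+1})^{-1}\sim 4^{\gamma}$, which exactly matches the leading growth $\Gamma_\gamma\sim 2^{2\gamma+7/2}c_A^2\Rt1\Rt2$, so
\begin{equation*}
\frac{1}{\sqrt2\,\nu^2\ko^4\delta_\gamma\delta_{\gamma+1}\Gamma_\gamma}\ \longrightarrow\ \frac{1}{2^{9}\,\delta_3^2\nu^2\ko^4c_A^2\Rt1\Rt2}\ >\ 0 ,
\end{equation*}
and the infinite product $C_4=\prod_{\gamma\ge3}(1+\xi_\gamma)$ diverges as written. (The paper defines $C_4$ the same way, so this defect is inherited from the source, not introduced by you; contrast $\epsilon_\alpha$ in Lemma \ref{l73}, whose corresponding term carries $\Gamma_\alpha\Gamma_{\alpha+1}$ in the denominator and does decay.) The final estimate \eqref{e84} survives, because the partial product $\prod_{\gamma=3}^{\alpha}(1+\xi_\gamma)$ then grows at most like $(1+c)^{\alpha}$, i.e.\ $\beta_3^{O(\alpha)}$, which is absorbed by the same quadratic--in--$\alpha$ slack you use to absorb $(\tfrac{1024\sqrt2}{\pi^2})^{\alpha-2}$ and $(c_A^2\Rt1\Rt2)^{\alpha-3}$; but the constant should then be taken as a bound on the partial products rather than as the (divergent) infinite product, and your stated reason for convergence should be corrected accordingly.
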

\begin{proof}
	As done in the proof of Lemma \ref{lemma59}, we can easily prove that under the new definition (\ref{e83}), the relation (\ref{e82}) is true.
	
	Then, we obtain (as in the proof of Proposition \ref{c73}) 
				\begin{align*}
					\Rt{\alpha+1}^2&:=\frac{1024\sqrt{2}}{\pi^2}\Gamma_{\alpha}(1+\xi_\alpha)\Rt{\alpha}^2\\
					&<(\frac{1024\sqrt{2}}{\pi^2})^{\alpha-2}\Rt{3}^2\prod_{\gamma=3}^\alpha\Gamma_\gamma \prod_{\gamma=3}^\infty(1+\xi_\gamma)\\
					&\leq (\frac{1024\sqrt{2}}{\pi^2})^{\alpha-3}2^{\alpha^2+\frac{9}{2}\alpha}(c_A^2\Rt{1}\Rt{2})^{\alpha-2}C_2C_4\Rt{3}^2\\
					&\leq \tilde{C}(g)\beta_3^{\frac{3}{2}(\alpha+1)^2}.
				\end{align*}
\end{proof}

The estimates in Theorem \ref{t714} identify the role of the subset of $C^\infty([0, L]^2)\cap H$ defined below.
\begin{definition}
\label{d81}
\begin{equation*}
\mathcal{C}(\sigma):=\{u\in C^\infty([0, L]^2)\cap H: \exists\, c_0 = c_0(u)\in\mathbb{R}  \text{ such that } \frac{|A^{\frac{\alpha}{2}}u|^2}{\nu^2\ko^{2\alpha}} \leq c_0e^{\sigma\alpha^2}, \alpha\in\mathbb{N}\}.
 \end{equation*}
\end{definition}

\begin{remark}
The main conclusion of Theorem \ref{t714} can be given in the following succinct formulation
\begin{equation*}
	0\in\mathcal{A} \Rightarrow \mathcal{A} \subset \mathcal{C}(\frac{3}{2}\ln\beta_3).
\end{equation*}

\end{remark}
\begin{remark}
	An equivalent definition of the class $\mathcal{C}(\sigma)$ is 
	\begin{equation}
	\label{def-norm}
		\mathcal{C}(\sigma) = \{u\in C^\infty([0, L]^2)\cap H: |u|_{\mathcal{C}_\sigma}:=\sup\{|A^{\frac{\alpha}{2}}u| e^{-\frac{\sigma}{2}\alpha^2}, \alpha\in\mathbb{N}\} < \infty\}.
	\end{equation}
	It is easy to check that $u\mapsto |u|_{\mathcal{C}_\sigma}$ is a norm on $\mathcal{C}(\sigma)$. Obviously, $\mathcal{C}(\sigma)$ equipped with this norm is a Banach space.
\end{remark}

Moreover, Theorem \ref{t714} has the following corollary
\begin{cor}
\label{t8-2}
	If $0\in\mathcal{A}$, then $g\in\mathcal{C}(\frac{5}{2}\ln \beta_3).$
\end{cor}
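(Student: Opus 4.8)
The plan is to combine the a priori bounds on the analytic extension from Theorem~\ref{t714} with a Cauchy integral estimate for the time derivative of the attractor solution through the origin, in the spirit of Lemma~\ref{l71} and Corollary~\ref{cor51}, while carefully tracking the geometric shrinkage $\delta_\alpha = 2^{-(\alpha-3)}\delta_3$ that is built into Theorem~\ref{t714}.

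First I would fix a solution $u(t)$, $t\in\mathbb{R}$, of the NSE in $\mathcal{A}$ with $u(0)=0$; evaluating the NSE at $t=0$ gives $g = \tfrac{du}{d\zeta}\big|_{\zeta=0}$. By Lemmas~\ref{lemma54}, \ref{lemma58} and \ref{lemma59} (for $\alpha=1,2,3$) and by Theorem~\ref{t714} (for $\alpha\geq 4$), $u(\zeta)$ is $\D(A^{\alpha/2})_\mathbb{C}$-valued analytic in $\mathcal{S}(\delta_\alpha)$ with $|A^{\alpha/2}u(\zeta)|\leq \Rt{\alpha}\nu\ko^\alpha$ there. Hence $g\in\D(A^{\alpha/2})$ for every $\alpha\in\mathbb{N}$, and since $A^{\alpha/2}$ commutes with $\tfrac{d}{d\zeta}$ on $\D(A^{\alpha/2})_\mathbb{C}$-valued analytic functions, the Cauchy formula for the derivative on the circle $|\xi|=r$, $r<\delta_\alpha$, gives $|A^{\alpha/2}g|\leq \Rt{\alpha}\nu\ko^\alpha/r$; letting $r\uparrow\delta_\alpha$ yields
\begin{equation*}
	\frac{|A^{\alpha/2}g|^2}{\nu^2\ko^{2\alpha}}\ \leq\ \frac{\Rt{\alpha}^2}{\delta_\alpha^2}.
\end{equation*}

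Next I would insert the growth bound. For $\alpha\geq 4$ one has $\delta_\alpha = 2^{-(\alpha-3)}\delta_3$ and, by \eqref{e84}, $\Rt{\alpha}^2\leq\tilde{C}(g)\,\beta_3^{\frac32\alpha^2}$, so
\begin{equation*}
	\frac{|A^{\alpha/2}g|^2}{\nu^2\ko^{2\alpha}}\ \leq\ \frac{\tilde{C}(g)}{64\,\delta_3^2}\,4^{\alpha}\,\beta_3^{\frac32\alpha^2}.
\end{equation*}
Since $\beta_3 = \max\{1024\sqrt2/\pi^2,\ c_A^2\Rt{1}\Rt{2}\} > 4$, we have $4^{\alpha} = e^{\alpha\ln 4}\leq e^{\alpha^2\ln\beta_3} = \beta_3^{\alpha^2}$ for all $\alpha\geq 1$, hence the right-hand side is at most $\tfrac{\tilde{C}(g)}{64\delta_3^2}\,\beta_3^{\frac52\alpha^2}$. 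For the finitely many remaining indices $\alpha\in\{1,2,3\}$ one bounds $\Rt{\alpha}^2/\delta_\alpha^2$ directly from the explicit constants $\Rt{1}$, $\R{2}$, $\Rt{2}$, $\Rt{3}$ and $\delta_1,\delta_2,\delta_3$ of Section~5, and since $\beta_3^{\frac52\alpha^2}\geq 1$ these are absorbed into the constant. Taking $c_0 := \max\{\tfrac{\tilde{C}(g)}{64\delta_3^2},\ \Rt{1}^2/\delta_1^2,\ \Rt{2}^2/\delta_2^2,\ \Rt{3}^2/\delta_3^2\}$ then gives $|A^{\alpha/2}g|^2/(\nu^2\ko^{2\alpha})\leq c_0\,e^{\frac52(\ln\beta_3)\alpha^2}$ for all $\alpha\in\mathbb{N}$, that is, $g\in\mathcal{C}(\tfrac52\ln\beta_3)$.

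This involves no new PDE estimates; the only point requiring care is the correct accounting of the halving $\delta_\alpha = 2^{-(\alpha-3)}\delta_3$ through the Cauchy bound, together with the elementary inequality $4^\alpha\leq\beta_3^{\alpha^2}$, which is precisely what promotes the exponent $\tfrac32$ of Theorem~\ref{t714} to $\tfrac52$. A minor technical check is that the Cauchy estimate requires the closed disk $\{|\xi|\le r\}$ to lie in the open strip $\mathcal{S}(\delta_\alpha)$, which holds for every $r<\delta_\alpha$ since then $|\Im\xi|\le|\xi|\le r<\delta_\alpha$.
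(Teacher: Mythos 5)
Your proposal is correct and follows essentially the same route as the paper: both use the Cauchy integral estimate $|A^{\alpha/2}g|\leq\Rt{\alpha}\nu\ko^\alpha/\delta_\alpha$ at $\zeta=0$ (the content of Lemma \ref{l71} and relation \eqref{e74}), insert $\delta_\alpha=2^{-(\alpha-3)}\delta_3$ and the bound \eqref{e84}, and then absorb the resulting $4^\alpha$ into $\beta_3^{\alpha^2}$ to promote the exponent from $\tfrac32$ to $\tfrac52$. The only differences are cosmetic (the paper writes $2^\alpha\leq\beta_3^{\frac12(1+\alpha^2)}$ before squaring rather than $4^\alpha\leq\beta_3^{\alpha^2}$ after, and uses the normalization $|A^{\alpha/2}g|^2/(\nu^4\ko^{2\alpha+4})$, which differs from yours only by a constant absorbed into $c_0$).
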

\begin{proof}
	 Since $\delta_\alpha=\frac{1}{2^{\alpha-3}}\delta_3>\frac{1}{2^{\alpha}}\delta_3$, by (\ref{e74}) and (\ref{e84}) we get
				\begin{align*}
					|A^{\frac{\alpha}{2}}g|&\leq \Rt{\alpha}\nu\ko^{\alpha}\frac{2^\alpha}{\delta_3}\leq \Rt{\alpha}\nu\ko^{\alpha}\frac{\beta_3^\alpha}{\delta_3}\leq \frac{\Rt{\alpha}\nu\ko^{\alpha}}{\delta_3\beta_3^{-\frac{1}{2}}}\beta_3^{\frac{1}{2}\alpha^2}.
				\end{align*}
				and then
				\begin{equation*}
					\frac{|A^{\frac{\alpha}{2}}g|^2}{\nu^4\ko^{2\alpha+4}} \leq \frac{\tilde{C}(g)}{\nu^2\ko^4\delta_3^2\beta_3^{-1}}\beta_3^{\frac{5}{2}\alpha^2},					
				\end{equation*}
				where both sides are dimensionless.
				
				Consequently, it follows that
				\begin{equation*}
					g\in \mathcal{C}(\frac{5}{2}\ln\beta_3).
				\end{equation*}
\end{proof}
\begin{remark}
We can prove that if $v\in H$ satisfies
\begin{equation*}
	|e^{b(\ln(\ko^{-1}A^{\frac{1}{2}}+a))^2}v| < \infty, \quad a > e, \quad b>0
\end{equation*}
then
\begin{equation}
\label{ee813}
	v \in \mathcal{C}({1}/{b}).
\end{equation}

Indeed, noting the following relation
\begin{align*}
	|A^{\frac{\alpha}{2}}v| =& 	|A^{\frac{\alpha}{2}}e^{-b(\ln(\ko^{-1}A^{\frac{1}{2}}+a))^2}e^{b(\ln(\ko^{-1}A^{\frac{1}{2}}+a))^2}v| \\
	\leq&|A^{\frac{\alpha}{2}}e^{-b(\ln(\ko^{-1}A^{\frac{1}{2}}+a))^2}|_\textrm{op}|e^{b(\ln(\ko^{-1}A^{\frac{1}{2}}+a))^2}v|,
\end{align*}
since 
\begin{align*}
	|A^{\frac{\alpha}{2}}e^{-b(\ln(\ko^{-1}A^{\frac{1}{2}}+a))^2}u|^2 =& \sum_{k\in \mathbb{Z}^2\setminus\{0\}} |k|^{2\alpha}e^{-2b(\ln(|k|+a))^2}|\hat{u}(k)|^2\\
	\leq& \sup_{k\in \mathbb{Z}^2\setminus\{0\}}|k|^{2\alpha}e^{-2b(\ln(|k|+a))^2}\sum_{k\in \mathbb{Z}^2\setminus\{0\}} |\hat{u}(k)|^2\\
	=&\sup_{k\in \mathbb{Z}^2\setminus\{0\}}|k|^{2\alpha}e^{-2b(\ln(|k|+a))^2}|u|^2,
\end{align*}
we obtain that
\begin{align*}
	|A^{\frac{\alpha}{2}}e^{-b(\ln(\ko^{-1}A^{\frac{1}{2}}+a))^2}|^2_\textrm{op} \leq& \sup_{k\in \mathbb{Z}^2\setminus\{0\}}|k|^{2\alpha}e^{-2b(\ln(|k|+a))^2}\\
	\leq& \sup_{k\in \mathbb{Z}^2\setminus\{0\}}(|k|+a)^{2\alpha}e^{-2b(\ln(|k|+a))^2}\\
	\leq& \sup_{1+a\leq x}e^{2\alpha\ln x}e^{-2b(\ln x)^2}
	= e^{\frac{\alpha^2}{2b}}.
\end{align*}
and (\ref{ee813}) follows from Definition \ref{d81}.

\end{remark}

\section{Proof Of Proposition \ref{p23}}\label{proof23sec}
In this section we will use the short procedure given in Remark \ref{r52} (see also Remark \ref{ra4}). Namely, we assume that the solution $u(t)$ and its analytic extension $u(\zeta)$ exists and then establish the necessary a priori estimates. In addition, for simplicity, we use the following notation
\begin{equation*}
	\rho_\beta := \rho_{\max}(G, \frac{|A^{\frac{\beta}{2}}u_0|}{\nu\ko^\beta}),
\end{equation*}
\begin{equation}
\label{mab}
	M_{\alpha, \beta} := M_\alpha(G, G_{\alpha-1},\frac{|A^{\frac{\beta}{2}}u_0|}{\nu\ko^\beta}),
\end{equation}
where 
\begin{equation*}
	G_\alpha : = \frac{|A^{\frac{\alpha}{2}}g|}{\nu\ko^{\alpha+2}},
\end{equation*}
$\rho_{\max}(\cdot, \cdot)$ is defined in (\ref{eea14}) and $M_\alpha(\cdot, \cdot)$ is defined in (\ref{a18}), (\ref{m2}), (\ref{m3}) and (\ref{mb}).

Using Corollary \ref{c2} and Lemma \ref{la1}, we easily obtain
\begin{prop}
\label{p10}
	$u(\zeta)$ is $\D(A^{\frac{1}{2}})_\mathbb{C}$-valued analytic function in $\Pi(t_0, \rho_1)$ and satisfies
\begin{equation*}
	|A^{\frac{1}{2}} u(\zeta)| \leq [M_{1,1}^2+\sqrt{2}G^2]^{\frac{1}{2}}\nu\ko, 
\end{equation*}
for any $\zeta\in \Pi(t_0, \rho_1)$, where $\Pi(\cdot,\cdot), M_{1,1}$ are defined in (\ref{a19}) and (\ref{mab}).
\end{prop}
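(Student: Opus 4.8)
The plan is to re-run the $\alpha=1$ argument from the proof of Lemma~\ref{lemma54}(i), but \emph{without} invoking the attractor bound on the initial enstrophy $|A^{1/2}u_0|$ (here $u_0=u(t_0)$), so that the radius of the complex-time sector and the resulting bound on $|A^{1/2}u(\zeta)|$ come out as explicit functions of $G$ and $|A^{1/2}u_0|/(\nu\ko)$. The two Appendix ingredients Corollary~\ref{c2} (the $\D(A^{1/2})_\mathbb{C}$-valued analytic extension on $\Pi(t_0,\rho_1)$) and Lemma~\ref{la1} (the quantitative bound) are precisely this computation packaged once and for all, so the proof of Proposition~\ref{p10} reduces to assembling them.

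First I would apply the short procedure of Remark~\ref{r52} (together with Remark~\ref{ra4} and the Galerkin justification in the Appendix): with $u(\zeta)$ assumed to exist, it suffices to control $|A^{1/2}u(t_0+\rho e^{i\theta})|$ along each ray with $\theta\in[-\pi/4,\pi/4]$ and $0\le\rho\le\rho_1$, the region $\Pi(t_0,\rho_1)$ of \eqref{a19} being exactly the sector these rays sweep out. Taking the inner product of the ray equation \eqref{annse} with $Au$ and estimating just as in the derivation of \eqref{maineq} --- Cauchy--Schwarz and Young on $(g,Au)$, Lemma~\ref{lemma51} and then Young's inequality with exponents $4$ and $4/3$ on the bilinear term, and Poincar\'e's inequality \eqref{poincare} --- one absorbs every $|Au|^2$ term and is left with the scalar Riccati inequality
\begin{equation*}
\frac{d\phi}{d\rho}\le\gamma+\beta\phi^3,\qquad \phi(\rho):=|A^{1/2}u(t_0+\rho e^{i\theta})|^2,
\end{equation*}
with the same $\gamma=\sqrt2\,|g|^2/\nu$ and $\beta=24^3c_L^8/(\sqrt2\,\nu^3\ko^2)$ as in \eqref{alpha15}; the one change from Lemma~\ref{lemma54} is that $\phi(0)=|A^{1/2}u_0|^2$ is now arbitrary.

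Then I would integrate exactly as in \eqref{alpha100}: setting $\rho_1:=\rho_{\max}(G,|A^{1/2}u_0|/(\nu\ko))=\big(4\beta^{1/3}(\gamma^{1/3}+\beta^{1/3}\phi(0))^2\big)^{-1}$ (the quantity fixed in \eqref{eea14}), one reads off that for $\rho\le\rho_1$ one has $\gamma^{1/3}+\beta^{1/3}\phi(\rho)\le\sqrt2\,(\gamma^{1/3}+\beta^{1/3}\phi(0))$, hence $\phi(\rho)\le(\sqrt2-1)(\gamma/\beta)^{1/3}+\sqrt2\,\phi(0)$. Substituting $(\gamma/\beta)^{1/3}=\big(2^{1/3}G^{2/3}/(24\,c_L^{8/3})\big)\nu^2\ko^2$ and using $G\ge c_L^{-2}$ from \eqref{e314} to dominate the first term by $\sqrt2\,G^2\nu^2\ko^2$, one gets $|A^{1/2}u(\zeta)|^2\le(M_{1,1}^2+\sqrt2\,G^2)\nu^2\ko^2$ for all $\zeta\in\Pi(t_0,\rho_1)$, with $M_{1,1}$ the nondimensional parameter of \eqref{mab} and \eqref{a18} (essentially $\sqrt2\,|A^{1/2}u_0|^2/(\nu^2\ko^2)$). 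Since $t_0$ and $\theta$ are arbitrary, this is the claim, and combining it with the standard $\alpha=1$ existence theory for the $\D(A^{1/2})_\mathbb{C}$-valued analytic extension gives Corollary~\ref{c2} and Lemma~\ref{la1}.

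The computation is routine given the Appendix; the only delicate points are (i) verifying that $\Pi(t_0,\rho_1)$ really is the set reachable along rays of angle $|\theta|\le\pi/4$, so that the dissipative term $-\nu\cos\theta|Au|^2$ retains a favorable sign ($\cos\theta\ge\frac{\sqrt2}{2}$), and (ii) tracking how $\rho_1=\rho_{\max}$ shrinks as $|A^{1/2}u_0|$ grows --- whether the sectors $\Pi(t,\rho_1)$ over real $t$ union to a genuine strip hinges on this, and it is the same mechanism that later forces the widths $\delta_\alpha$ in Proposition~\ref{p23} to depend on $g$.
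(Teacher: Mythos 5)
Your overall strategy coincides with the paper's: the paper proves Proposition \ref{p10} in one line by citing Corollary \ref{c2} and Lemma \ref{la1}, and the computation you reproduce (the Riccati inequality $\frac{d\phi}{d\rho}\le\gamma+\beta\phi^3$, the choice of $\rho_{\max}$ in \eqref{eea14}, and the bound $\phi(\rho)\le(\sqrt2-1)(\gamma/\beta)^{1/3}+\sqrt2\,\phi(0)$) is exactly the content of Lemma \ref{la-1} in the Appendix, carried out for the Galerkin system and then transferred to $u(\zeta)$ by Lemma \ref{la1}.

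There is, however, one concrete gap in how you pass from the single sector to all of $\Pi(t_0,\rho_1)$, and it shows up in your bookkeeping of the constant. The ray computation based at $t_0$ only controls $u$ on the sector $\sector(t_0,\rho_1)$ of \eqref{sector}, a bounded region; the half-strip $\Pi(t_0,\rho_1)$ of \eqref{a19} is the union of such sectors based at every real $t^0\ge t_0$, and "since $t_0$ is arbitrary" does not apply here because $t_0$ is the fixed time at which the datum $u_0$ is prescribed. When you re-base at $t^0>t_0$, the initial value of $\phi$ becomes $|A^{\frac{1}{2}}u(t^0)|^2$, which must be controlled by the real-time enstrophy inequality \eqref{a5}: $|A^{\frac{1}{2}}u_\kappa(t^0)|^2\le|A^{\frac{1}{2}}u_0|^2+\nu^2\ko^2G^2$. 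Feeding this into $\sqrt2\,\phi(0)$ is precisely what produces the extra $\sqrt2\,G^2$ in the statement of Proposition \ref{p10} and Corollary \ref{c2}. Your derivation instead attributes the $\sqrt2\,G^2$ to dominating $(\gamma/\beta)^{1/3}$ via \eqref{e314}; but that term is already included inside $M_{1,1}^2$ as the summand $\frac{2^{1/3}}{24}G^2$ in \eqref{a18}, so your accounting both double-counts it and, more importantly, never bounds $|A^{\frac{1}{2}}u(\zeta)|$ for $\Re\zeta-t_0>\rho_1$. Relatedly, the delicate point you flag in (ii) is real: $\rho_{\max}$ evaluated at the (possibly larger) enstrophy at $t^0$ is smaller than $\rho_1$, so one must check that the re-based sectors still cover $\Pi(t_0,\rho_1)$ (or shrink its width accordingly); merely noting this as something to "track" leaves the covering of $\Pi(t_0,\rho_1)$ unestablished.
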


To prove Proposition \ref{p23}, we need to prove first the following
\begin{prop}
\label{p11}
	Assume $\beta\in\mathbb{N}$, $u_0\in\D(A^{\frac{\beta}{2}})$, and $g\in\D(A^{\frac{\beta-1}{2}})$ then $u$ is $\D(A^{\frac{\beta+1}{2}})_\mathbb{C}$-valued analytic function in the interior of $\sector(t_0, \rho_\beta)$ and there exists $M_{\beta,\beta}$ such that  
	\begin{equation}
	\label{e9-2}
		|A^{\frac{\beta}{2}}u(\zeta)| \leq M_{\beta,\beta}\nu\ko^\beta, \quad \zeta\in \sector(t_0, \rho_\beta),
\end{equation}
holds, 		
where $\sector(\cdot, \cdot)$ is defined in (\ref{sector}) and $M_{\beta,\beta}$ is defined in (\ref{mab}).
	\end{prop}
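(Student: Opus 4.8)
The plan is to establish Proposition~\ref{p11} by induction on $\beta$, bootstrapping up one power of $A^{1/2}$ at a time. By Remark~\ref{r52} (see also Remark~\ref{ra4}) it suffices, for each $t_0\in\bR$ and each $\theta\in[-\pi/4,\pi/4]$, to produce a priori bounds for the solution of \eqref{annse} along the ray $\rho\mapsto u(t_0+\rho e^{i\theta})$, $0\le\rho\le\sqrt2\,\rho_\beta$; the claimed $\D(A^{(\beta+1)/2})_\mathbb{C}$-valued analyticity on the interior of $\sector(t_0,\rho_\beta)$ then follows from the finite-dimensional Galerkin system together with the limiting argument of the Appendix.

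The base of the induction is the classical fact, recorded in Proposition~\ref{p10} and obtained there via Corollary~\ref{c2} and Lemma~\ref{la1}, that $u(\zeta)$ is $\D(A^{1/2})_\mathbb{C}$-valued analytic on $\Pi(t_0,\rho_1)$ with $|A^{1/2}u(\zeta)|\le[M_{1,1}^2+\sqrt2\,G^2]^{1/2}\nu\ko$. For $\beta=1,2,3$ the proposition is proved by the same scheme as the inductive step below, with the nonlinear term $(B(u,u),A^\beta u)$ estimated by the first inequality of Lemma~\ref{lemma51} for $\beta=1$, by \eqref{alpha4} for $\beta=2$, and by the integration-by-parts bound used in the proof of Lemma~\ref{l72} for $\beta=3$. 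In particular these three cases supply the uniform bounds on $|u|$, $|A^{1/2}u|$, $|Au|$ and $|A^{3/2}u|$ over $\sector(t_0,\rho_\beta)$ that are needed in the higher estimates.

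For the inductive step, fix $\beta\ge3$, assume Proposition~\ref{p11} at level $\beta$, and prove it at level $\beta+1$; so $u_0\in\D(A^{(\beta+1)/2})$ and $g\in\D(A^{\beta/2})$. Since $\ko|A^{\beta/2}u_0|\le|A^{(\beta+1)/2}u_0|$ by \eqref{poincare} and $\rho_{\max}(G,\cdot)$ is non-increasing, one has $\rho_{\beta+1}\le\rho_\beta$, hence $\sector(t_0,\rho_{\beta+1})\subset\sector(t_0,\rho_\beta)$; since the level-$\beta$ hypotheses are weaker than the present ones, the induction hypothesis already gives that $u(\zeta)$ is $\D(A^{(\beta+1)/2})_\mathbb{C}$-valued analytic on the interior of $\sector(t_0,\rho_\beta)$ with $|A^{\beta/2}u(\zeta)|\le M_{\beta,\beta}\nu\ko^\beta$ there. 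Now take the inner product of \eqref{annse} with $A^{\beta+1}u$: writing $(g,A^{\beta+1}u)=(A^{\beta/2}g,A^{(\beta+2)/2}u)$, bounding the linear term by Cauchy--Schwarz and Young, the nonlinear term by Lemma~\ref{l46} with all three slots equal to $u$ and exponent $\beta+1>3$, and using Young's inequality once more to move the factor $|A^{(\beta+2)/2}u|$ onto the dissipation, one is led to a differential inequality of the form
\[
	\frac{d}{d\rho}\bigl|A^{\frac{\beta+1}{2}}u(t_0+\rho e^{i\theta})\bigr|^2+\frac{\nu\cos\theta}{2}\,\bigl|A^{\frac{\beta+2}{2}}u\bigr|^2\le \frac{C}{\nu}\,\bigl|A^{\frac{\beta}{2}}g\bigr|^2+\Gamma\,\nu\ko^2\,\bigl|A^{\frac{\beta+1}{2}}u\bigr|^2 ,
\]
where $C$ is absolute and $\Gamma$ depends only on $c_A$, on $\beta$ (through a power of $2$) and on the already-controlled quantities $|u|,|A^{1/2}u|,|Au|,|A^{3/2}u|$, and where $\cos\theta\ge\frac{1}{\sqrt2}$ is used as in Lemma~\ref{l72}. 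Discarding the dissipative term and applying Gronwall's lemma over $\rho\in[0,\sqrt2\,\rho_{\beta+1}]$ gives the pointwise bound $|A^{(\beta+1)/2}u(\zeta)|\le M_{\beta+1,\beta+1}\nu\ko^{\beta+1}$ on $\sector(t_0,\rho_{\beta+1})$ with $M_{\beta+1,\beta+1}$ of the form \eqref{mab}, which is \eqref{e9-2} at level $\beta+1$; integrating the same inequality while keeping the dissipative term yields a bound on $\int_0^{\sqrt2\rho_{\beta+1}}\bigl|A^{(\beta+2)/2}u(t_0+\rho e^{i\theta})\bigr|^2\,d\rho$ uniform in $t_0$ and $\theta$.

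To finish the step I would upgrade this to $\D(A^{(\beta+2)/2})_\mathbb{C}$-valued analyticity on the interior of $\sector(t_0,\rho_{\beta+1})$, feeding the last $L^2$-in-$\rho$ bound into the mean value property of the analytic function $A^{(\beta+2)/2}u$ over a small disk inside the sector, with the disk replaced by the polygon $abcdef$ exactly as in the proof of Lemma~\ref{lemma54}; carried out at the Galerkin level this produces $\D(A^{(\beta+2)/2})$ bounds uniform in the Galerkin parameter on compact subsets of the interior, and the limiting procedure of the Appendix then closes the induction. I expect the main difficulty to be the bookkeeping rather than any individual inequality: one has to choose the Young exponents and arrange the nested sectors $\sector(t_0,\rho_\beta)$ so that every lower-order bound is available where it is used and so that the Gronwall output at each level is controlled by the closed-form constant of \eqref{mab}, keeping track of how $\Gamma$ (and hence the exponential factor) grows in $\beta$; the Galerkin-level analyticity upgrade, though routine in spirit, is the other point that needs care.
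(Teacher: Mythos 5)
Your proposal is correct and follows essentially the same route as the paper: a priori differential inequalities along rays via Remark \ref{r52}, the nonlinear term controlled by Lemma \ref{l46} (and its low-order analogues for $\beta\le 3$), Gronwall on the sector $S(t_0,\rho_\beta)$, and the integrated dissipation together with the mean value property over disks inside the sector to upgrade to $\D(A^{\frac{\beta+1}{2}})_{\mathbb{C}}$-valued analyticity. The only substantive deviation is at $\beta=2$, where the paper replaces \eqref{alpha4} by the sharper integration-by-parts bound \eqref{b2} precisely so that after Young's inequality the right-hand side is linear in $|Au|^2$ and Gronwall closes with the explicit constant \eqref{m2}; otherwise your induction on $\beta$ is just a reindexing of the paper's uniform treatment of all $\beta>3$, since the key differential inequality at each level only uses the bounds on $|u|,|A^{\frac12}u|,|Au|,|A^{\frac32}u|$ from the three base cases.
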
	
\begin{proof}
 For $\beta = 1$, (\ref{e9-2}) is  a direct corollary of Lemmas \ref{la-1}, \ref{la1} and \ref{lemma54}.
 
 For $\beta = 2$, we will need a supplementary estimate for the term $(B(u, u), A^2u)$. 
 Integrating by parts we have
 \begin{align*}
 	(B(u, u), A^2u) = \sum_{j=1,2} [(B(D_ju, u), D_jAu) + (B(u, D_ju), D_jAu)].
 \end{align*}
 Using interpolation ($|Au|\leq|A^{\frac{1}{2}}u|^{\frac{1}{2}}|A^{\frac{3}{2}}u|^{\frac{1}{2}}$ and $|A^{\frac{1}{2}}u|\leq|u|^{\frac{2}{3}}|A^{\frac{3}{2}}u|^{\frac{1}{3}}$), we infer that
 \begin{align}
 \label{b2}
 	|(B(u, u), A^2u)| &\leq 4c_L^2(|A^{\frac{1}{2}}u||Au||A^{\frac{3}{2}}u|+|u|^{\frac{1}{2}}|A^{\frac{1}{2}}u|^{\frac{1}{2}}|Au|^{\frac{1}{2}}|A^{\frac{3}{2}}u|^{\frac{3}{2}})\\
 		&\leq 8c_L^2|u|^{\frac{1}{2}}|A^{\frac{1}{2}}u|^{\frac{1}{2}}|Au|^{\frac{1}{2}}|A^{\frac{3}{2}}u|^{\frac{3}{2}}\nonumber\\
 		&\leq \frac{8c_L^2}{\ko^{\frac{1}{2}}}|A^{\frac{1}{2}}u||Au|^{\frac{1}{2}}|A^{\frac{3}{2}}u|^{\frac{3}{2}}.\nonumber
 \end{align}
	Using (\ref{b2}), we obtain
 	\begin{align*}
 		\frac{1}{2}\frac{d}{d\rho}|Au(t_{0}+\rho e^{i\theta})|^2&+\frac{3\nu\cos\theta}{4}|A^{\frac{3}{2}}u|^2\leq \frac{1}{\nu\cos\theta}|A^{\frac{1}{2}}g|^2+|(B(u,u),A^{2}u)|\\
 		&\leq \frac{|A^{\frac{1}{2}}g|^2}{\nu\cos\theta} + \frac{1}{4}\left[(\frac{3}{\nu\cos\theta})^{3/4}\frac{8c_L^2}{\ko^{\frac{1}{2}}}|A^{\frac{1}{2}}u||Au|^{\frac{1}{2}}|\right]^4 + \frac{3}{4}((\frac{\nu\cos\theta}{3})^{3/4}|A^{\frac{3}{2}}u|^{\frac{3}{2}})^{4/3}\\
 		&\leq \frac{|A^{\frac{1}{2}}g|^2}{\nu\cos\theta} + \frac{3^3\cdot 2^{{23}/{2}} c_L^8}{\nu^3\ko^2}|A^{\frac{1}{2}}u|^4|Au|^2+ \frac{\nu\cos\theta}{4}|A^{\frac{3}{2}}u|^{2},
 	\end{align*}
 	whence 
 	\begin{align*}
 	\frac{d}{d\rho}|Au(t_{0}+\rho e^{i\theta})|^2&+\frac{\sqrt{2}\nu}{2}|A^{\frac{3}{2}}u|^2\leq \frac{2\sqrt{2}|A^{\frac{1}{2}}g|^2}{\nu} + \frac{3^3\cdot 2^{{25}/{2}} c_L^8}{\nu^3\ko^2}|A^{\frac{1}{2}}u|^4|Au|^2,
 	\end{align*}
since $|\theta|\leq \frac{\pi}{4}$.
 	
 	By Lemmas \ref{la-1} and \ref{la1}, for all $ \zeta\in \sector(t_0, \rho_2)$, we have $|A^{\frac{1}{2}}u(\zeta)|\leq M_{1,2}\nu\ko$. It follows that
 	 \begin{align}
 	 \label{ee95}
 	\frac{d}{d\rho}|Au(t_{0}+\rho e^{i\theta})|^2&+\frac{\sqrt{2}\nu}{2}|A^{\frac{3}{2}}u|^2\leq a + b|Au|^2,
 	\end{align}
 	where $a = {2\sqrt{2}|A^{\frac{1}{2}}g|^2}/{\nu}$ and $b = {3^3\cdot 2^{{25}/{2}} c_L^8\nu\ko^2}M_{1,2}^4$.
 	Then, we obtain
 	\begin{align*}
 		|Au(t_{0}+\rho e^{i\theta})|^2&\leq e^{b\rho}|Au(t_{0})|^2+\frac{a}{b}(e^{b\rho}-1)\leq e^{b\rho}[|Au_0|^2+\frac{a}{b}] \leq e^{b\rho_2}[|Au_0|^2+\frac{a}{b}],
 	\end{align*}
 	that is, 
 	\begin{equation*}
 		|Au(\zeta)| \leq M_{2,2}\nu\ko^2,\quad \forall\ \zeta\in \sector(t_0, \rho_2),
 	\end{equation*}
 	where $M_{2,2} = M_{2}(G, G_1,\frac{|Au_0|}{\nu\ko})$ and 
 	\begin{align}
 	\label{m2}
 		M_2(G, G_1, \frac{|Au_0|}{\nu\ko}) &:= e^{3^3\cdot 2^{{23}/{2}} c_L^8\nu\ko^2M_{1,2}^4\rho_2}\left[\frac{|Au_0|^2}{\nu^2\ko^4}+\frac{G_1^2}{3^3\cdot 2^{10} c_L^8}M_{1,2}^{-4}\right]^{\frac{1}{2}}.
 	\end{align}

	Furthermore, from (\ref{ee95}) we obtain
	\begin{align}
	\label{e98}
		\int_0^{\rho_2}|A^\frac{3}{2}u(t_0+\rho e^{i\theta})|^2d\rho \leq  \gamma,
	\end{align}	 	
	where 
	\begin{align*}
		\gamma &:= \frac{4|A^{\frac{1}{2}}g|^2}{\nu^2}\rho_2+\frac{\sqrt{2}}{\nu}|Au_0|^2
 	+ {3^3\cdot 2^{13} c_L^8\nu\ko^6}M_{1,2}M_{2,2}^2\rho_2.	\\
	\end{align*}
 	For any $\zeta = t_0 + \rho e^{i\theta}$, $|\theta|<\frac{\pi}{4}$ and $0<\rho<\rho_2$, we can choose $r > 0$ such that
 	\begin{equation}
 	\label{e99}
 		D(\zeta, r) \subset \sector(t_0, \rho_2).
 	\end{equation}	
	From (\ref{e98}), by the same method used in the proof of Lemma \ref{lemma54}, we obtain that
	\begin{align}
	\label{e910}
		|A^{\frac{3}{2}}u(\zeta)| &= |\frac{1}{\pi r^2}\int_{D(\zeta, r)}A^{\frac{3}{2}}u(\xi)d\Re(\xi)d\Im(\xi)|\leq \frac{\rho_2^{\frac{3}{2}}\gamma^{\frac{1}{2}}}{2\sqrt{3}r^2}.
	\end{align}	 
	 Therefore, $u(\zeta)$ is a $\D(A^{\frac{3}{2}})_\mathbb{C}$-valued analytic function in the interior of the sector $\sector(t_0, \rho_2)$.	
 	
 	For $\beta = 3$, from (\ref{e710}) we proceed as in the case $\beta = 2$ to obtain
 	\begin{equation}
 	\label{e9-12}
 		|A^{\frac{3}{2}}u(\zeta)| \leq M_{3,3}\nu\ko^3,\quad \forall\ \zeta\in \sector(t_0, \rho_3),
 	\end{equation} 
 	where $M_{3,3} = M_{3}(G, G_2,\frac{|A^{\frac{3}{2}}u_0|}{\nu\ko})$ and 
 	\begin{align}
 	\label{m3}
 		M_3(G, G_2, \frac{|A^{\frac{3}{2}}u_0|}{\nu\ko^3})&:= e^{3^3\cdot 2^{{31}/{2}} c_L^8\nu\ko^2M_{1,3}^4\rho_3}\left[\frac{|A^{\frac{3}{2}}u_0|^2}{\nu^2\ko^6}+\frac{G_2^2}{3^3\cdot 2^{15} c_L^8}M_{1,3})^{-4}\right]^{\frac{1}{2}}.
 	\end{align} 	

	Furthermore, the case $\beta=3$ can be treated as the previous case $\beta=2$ by deducing from (\ref{e9-12}) and (\ref{m3}) the sequence of relations analogous to the sequence of (\ref{e98}), (\ref{e99}) and (\ref{e910}). We obtain that $u(\zeta)$ is a $\D(A)_\mathbb{C}$-valued analytic function in the interior of the sector $\sector(t_0, \rho_3)$.

 	For the cases $\beta > 3$, by Lemma \ref{l46}, we have 
 				\begin{align}
 				\label{e914}
				\frac{1}{2}\frac{d}{d\rho}|A^{\frac{\beta}{2}}u(t_{0}&+\rho e^{i\theta})|^2+\frac{1}{2}\nu\cos\theta|A^{\frac{\beta+1}{2}}u|^2 \leq \frac{|A^{\frac{\beta-1}{2}}g|^2}{\nu\cos\theta}\\
				&+\frac{\{{2^{\beta+\frac{3}{2}} c_A}|u|^{\frac{1}{2}}|Au|^{\frac{1}{2}}|A^{\frac{\beta}{2}}{u}|\}^2}{\nu\cos\theta}+{2^{\beta+\frac{3}{2}} c_A}|A^{\frac{1}{2}}{u}|^{\frac{1}{2}}|A^{\frac{3}{2}}{u}|^{\frac{1}{2}}|A^{\frac{\beta}{2}}{u}|^2.\nonumber
     			\end{align}

 	It follows that for any $\zeta \in \sector(t_0, \rho_\beta)$, we have 
 	\begin{align*}
 	 	 |A^{\frac{1}{2}}u(\zeta)| &\leq M_{1,\beta}\nu\ko,\,
|Au(\zeta)| \leq M_{2,\beta}\nu\ko^2,\,
|A^{\frac{3}{2}}u(\zeta)| \leq M_{3,\beta}\nu\ko^3.
\end{align*}

	Therefore, we obtain
		 \begin{align*}
				\frac{1}{2}\frac{d}{d\rho}|A^{\frac{\beta}{2}}u(t_{0}+\rho e^{i\theta})|^2&+\frac{1}{2}\nu\cos\theta|A^{\frac{\beta+1}{2}}u|^2 \\
				\leq& \frac{|A^{\frac{\beta-1}{2}}g|^2}{\nu\cos\theta}+\frac{1}{\nu\cos\theta}\{{2^{\beta+\frac{3}{2}} c_A}(M_{1,\beta}M_{2,\beta})^{\frac{1}{2}}\nu\ko|A^{\frac{\beta}{2}}{u}|\}^2\\
				&+{2^{\beta+\frac{3}{2}} c_A}(M_{1,\beta}M_{3,\beta})^{\frac{1}{2}}\nu\ko^2|A^{\frac{\beta}{2}}{u}|^2\\
				\leq& \frac{\sqrt{2}}{\nu}|A^{\frac{\beta-1}{2}}g|^2+\gamma_\beta\nu\ko^2|A^{\frac{\beta}{2}}{u}|^2,
     	\end{align*}
 	where 
 	\begin{equation*}
 	\gamma_\beta = 2^{2\beta+7/2}c_A^2M_{1,\beta}M_{2,\beta}+{2^{\beta+\frac{3}{2}} c_A}\sqrt{M_{1,\beta}M_{3,\beta}}.
 	\end{equation*}
 	and consequently 
 	\begin{align*}
 		|A^{\frac{\beta}{2}}u(\zeta)| &\leq M_{\beta,\beta}\nu\ko^\beta, \quad \forall\ \zeta \in \sector(t_0, \rho_\beta),
 	\end{align*}
	where $M_{\beta,\beta} = M_{\beta}(G, G_{\beta-1},\frac{|A^{\frac{\beta}{2}}u_0|}{\nu\ko^\beta})$ and 
 	\begin{align}
 	\label{mb}
 		M_\alpha(G, G_{\alpha-1}, \frac{|A^{\frac{\beta}{2}}u_0|}{\nu\ko^\beta}) :=e^{{\gamma_\beta} \nu\ko^2\rho_\beta}\left[\frac{|A^{\frac{\beta}{2}}u_0|^2}{\nu^2\ko^{2\beta}}+\frac{\sqrt{2}G_{\alpha-1}^2}{\gamma_\beta}\right]^{\frac{1}{2}}, \quad \forall\, \alpha>3.
 	\end{align}

	Furthermore, using the same method as in the case $\beta = 2\text{ or }3$, we can deduce that $u(\zeta)$ is a $\D(A^{\frac{\beta+1}{2}})_\mathbb{C}$-valued analytic function in the interior of this sector $\sector(t_0, \rho_\beta)$.	
\end{proof}

	We are now ready to give the proof of Proposition \ref{p23}. First, the case $\alpha = 1$ is a direct consequence of Lemma \ref{lemma54}. Suppose that Proposition \ref{p23} is valid for some $\alpha\geq 1$. This means that if $g\in\D(A^{\frac{\alpha-1}{2}})$, then $\mathcal{A}\subset\D(A^{\frac{\alpha+1}{2}})$ and moreover any solution $u(\cdot)$ in $\mathcal{A}$ can be extended to an $\D(A^{\frac{\alpha+1}{2}})_\mathbb{C}$-valued analytic function in the strip $\mathcal{S}(\delta_\alpha)$ such that 
	\begin{equation}
	\label{e911}
	\sup\{|A^{\frac{\alpha}{2}}u(\zeta)|: \zeta\in \mathcal{S}(\delta_\alpha)\} \leq m_\alpha\nu\ko^\alpha< \infty.
\end{equation}	 

We will prove, under the assumption $g\in\D(A^{\frac{\alpha}{2}})$, that $\mathcal{A}\subset\D(A^{\frac{\alpha+2}{2}})$ and that any solution $u(\cdot)$ in $\mathcal{A}$ can be extended to a $\D(A^{\frac{\alpha+2}{2}})_\mathbb{C}$-valued analytic function in some strip $\mathcal{S}(\delta_{\alpha+1})$ satisfying 
	\begin{equation}
	\label{e912}
	 \sup\{|A^{\frac{\alpha+1}{2}}u(\zeta)|: \zeta\in \mathcal{S}(\delta_{\alpha+1})\}\leq 		m_{\alpha+1}\nu\ko^{\alpha+1} < \infty.
\end{equation}	 
	
	 The estimate in (\ref{e911}) is in particular valid for $t\in\mathbb{R}$. Therefore, applying Proposition \ref{p11} for $\beta = \alpha+1$ and each $t_0 \in \mathbb{R}$, we obtain by defining
	\begin{equation*}
		m_{\alpha+1} := M_{\alpha+1}(G, G_\alpha, m_\alpha),
	\end{equation*}
	and 
	\begin{equation*}
		\delta_{\alpha+1} := \frac{1}{\sqrt{2}}\rho_{\max}(G, m_\alpha)
\end{equation*}
that the solution $u(\cdot)$ in $\mathcal{A}$ can be extended to a $\D(A^{\frac{\alpha+2}{2}})_\mathbb{C}$-valued analytic function in the strip $\mathcal{S}(\delta_{\alpha+1})$ and relation (\ref{e912}) holds. Thus the proof is completed by induction.

\section{The ``All for one, one for all" law}
\begin{prop}
\label{p91}
	Let $\alpha\in\mathbb{N}$ be fixed. Then
	\[
		\mathcal{A}\cap \D(A^{\frac{\alpha}{2}}) \neq \emptyset \Rightarrow \mathcal{A}\subset \D(A^{\frac{\alpha}{2}}).
	\]
Furthermore, $g\in\D(A^{\frac{\alpha-2}{2}}), \alpha>2$.
\end{prop}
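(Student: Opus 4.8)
The plan is an induction on $\alpha$. For $\alpha=1$ and $\alpha=2$ there is nothing to prove: one has $\mathcal{A}\subset\D(A^{1/2})$ and $\mathcal{A}\subset\D(A)$ unconditionally by (\ref{e519}) and (\ref{e520}), and the ``furthermore'' clause is vacuous for $\alpha\le 2$. For $\alpha\ge 3$ the engine is the following bootstrap: along the complete trajectory $u(t)$, $t\in\mathbb{R}$, in $\mathcal{A}$ passing through the assumed regular point, evaluate (\ref{HNSE}) at a single instant to extract a regularity statement for $g$, and then feed that into Proposition \ref{p23}, which simultaneously upgrades the regularity of all of $\mathcal{A}$ and the level of the $\D(A^{\cdot})_{\mathbb{C}}$-valued analytic extension of every trajectory. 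It is this last upgrade that permits the induction to continue, since it controls $\frac{du}{dt}(0)$ in a higher norm.

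For the base case $\alpha=3$, fix $u_0\in\mathcal{A}\cap\D(A^{3/2})$ and let $u(\cdot)$ be the trajectory in $\mathcal{A}$ with $u(0)=u_0$. By Lemma \ref{lemma54}, $u(\cdot)$ extends to a $\D(A^{1/2})_{\mathbb{C}}$-valued analytic function on $\mathcal{S}(\delta_1)$; since this function is real-valued on the real axis, its complex derivative at $0$ coincides with the time derivative and lies in the same real space, i.e. $\frac{du}{dt}(0)\in\D(A^{1/2})$. Evaluating (\ref{HNSE}) at $t=0$ gives $g=\frac{du}{dt}(0)+\nu Au_0+B(u_0,u_0)$, in which $\frac{du}{dt}(0)\in\D(A^{1/2})$, $Au_0\in\D(A^{1/2})$ because $u_0\in\D(A^{3/2})$, and $B(u_0,u_0)\in\D(A)\subset\D(A^{1/2})$ by (\ref{abiop}). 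Hence $g\in\D(A^{1/2})=\D(A^{(3-2)/2})$, and Proposition \ref{p23} applied with its index equal to $2$ yields $\mathcal{A}\subset\D(A^{3/2})$ together with the $\D(A^{3/2})_{\mathbb{C}}$-valued analytic extension of every trajectory on $\mathcal{S}(\delta_2)$.

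For the inductive step, assume the proposition, together with the accompanying $\D(A^{\alpha/2})_{\mathbb{C}}$-analyticity of trajectories near the real axis, holds for some $\alpha\ge 3$; we prove it for $\alpha+1$. Suppose $\mathcal{A}\cap\D(A^{(\alpha+1)/2})\ne\emptyset$. Since $\D(A^{(\alpha+1)/2})\subset\D(A^{\alpha/2})$, the inductive hypothesis applies and gives $g\in\D(A^{(\alpha-2)/2})$, $\mathcal{A}\subset\D(A^{\alpha/2})$, and that every trajectory $u(\cdot)$ in $\mathcal{A}$ is a $\D(A^{\alpha/2})_{\mathbb{C}}$-valued analytic function on $\mathcal{S}(\delta_{\alpha-1})$ (this last being exactly what Proposition \ref{p23}, with index $\alpha-1$, provides from the regularity of $g$). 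Now pick $u_0\in\mathcal{A}\cap\D(A^{(\alpha+1)/2})$ with trajectory $u(\cdot)$; then $\frac{du}{dt}(0)\in\D(A^{\alpha/2})\subset\D(A^{(\alpha-1)/2})$. Evaluating (\ref{HNSE}) at $t=0$ again, $g=\frac{du}{dt}(0)+\nu Au_0+B(u_0,u_0)$, where $Au_0\in\D(A^{(\alpha-1)/2})$ since $u_0\in\D(A^{(\alpha+1)/2})$, and $B(u_0,u_0)\in\D(A^{(\alpha-1)/2})$ (in fact in $\D(A^{\alpha/2})$) by iterating (\ref{abiop}) and the bilinear bounds of Lemmas \ref{bioplist}, \ref{lemma51}, \ref{l75}, \ref{l46}, using that $u_0\in\D(A^{(\alpha+1)/2})$ with $(\alpha+1)/2\ge 2$. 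Thus $g\in\D(A^{(\alpha-1)/2})=\D(A^{((\alpha+1)-2)/2})$, and Proposition \ref{p23} with index $\alpha$ gives $\mathcal{A}\subset\D(A^{(\alpha+1)/2})$ and the corresponding analytic extension, completing the induction.

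The substantive points, none of them deep given the material already at hand, are: (i) justifying that the $\D(A^{\alpha/2})_{\mathbb{C}}$-valued analyticity of $u(\zeta)$ puts $\frac{du}{dt}(0)$ in $\D(A^{\alpha/2})$ — i.e. that a Banach-space-valued analytic function that is real on the real axis has its derivative there in the same real subspace; and (ii) tracking the regularity of $B(u_0,u_0)$, for which (\ref{abiop}) and its iterates, together with Lemmas \ref{bioplist}, \ref{lemma51}, \ref{l75} and \ref{l46}, give more than enough (indeed $B(u_0,u_0)$ is a half-derivative smoother than required). The only structural idea is the bootstrap loop ``read $g$ off the NSE at one point $\Rightarrow$ apply Proposition \ref{p23}''; the rest is bookkeeping on the indices.
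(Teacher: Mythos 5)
Your proof is correct and follows essentially the same route as the paper: induction on $\alpha$ with base case $\alpha=3$, reading $g$ off the NSE at $t=0$ along the trajectory through the regular point, controlling $B(u_0,u_0)$ by the bilinear estimates (Lemma \ref{l75} in the inductive step), and then invoking Proposition \ref{p23} to upgrade all of $\mathcal{A}$ and the analytic extension at once. The only cosmetic differences are that in the base case you cite (\ref{abiop}) to place $B(u_0,u_0)$ in $\D(A)$ where the paper does a direct duality estimate landing in $\D(A^{1/2})$, and that you make explicit the analyticity clause that the paper carries implicitly in its induction hypothesis.
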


\begin{proof}
	Note that the case $\alpha=1,2$ are trivially true.
   We now consider the case $\alpha=3$. 
		
		Let $u^0\in\mathcal{A}\cap\D(A^{\frac{3}{2}})$ and let $u(t)$ denote the solution of the NSE such that $u(0) = u^0$.
		We already know that this solution extends to a $\D(A)_\mathbb{C}-$ valued analytic solution $u(\zeta)$ in a strip of $\mathcal{S}(\delta_\alpha):=\{\zeta\in\mathbb{C}:|\Im(\zeta)|<\delta\}, \delta>0$. 
		Therefore from  
		\begin{equation*}
			\frac{d}{d\zeta}u(\zeta) + \nu Au(\zeta) + B(u(\zeta), u(\zeta)) = g, \quad \zeta\in \mathcal{S}(\delta_\alpha),
		\end{equation*}
		we obtain
		\begin{equation}
		\label{e92}
			g = \frac{d}{d\zeta}u(\zeta)|_{\zeta=0} + \nu Au^0 + B(u^0, u^0).
		\end{equation}
		where $\frac{d}{d\zeta}u(\zeta)|_{\zeta=0}\in\D(A)$, $\nu Au^0 \in \D(A^{\frac{1}{2}})$.\\
		Since for $\forall\  w\in \D(A)$ we have
		\begin{align*}
			|(B(u^0, u^0), Aw)| &= |\sum_{j=1,2} (B(D_ju^0,u^0)+B(u^0, D_ju^0),D_jw)|\\
			&\leq (c_L^2|A^{\frac{1}{2}}u^0||Au^0|+c_A|u^0|^{\frac{1}{2}}|Au^0|^{\frac{3}{2}})(|D_1w|+|D_2w|)\\
			&\leq \sqrt{2}(c_L^2+c_A)|u^0|^{\frac{1}{2}}|Au^0|^{\frac{3}{2}}|A^{\frac{1}{2}}w|,
		\end{align*}
		we get that
		\begin{equation}
		\label{e93}		
			B(u^0, u^0)\in\D(A^{\frac{1}{2}}),		
		\end{equation}
		and
		\[
				|A^{\frac{1}{2}}B(u^0, u^0)| \leq \sqrt{2}(c_L^2+c_A)|u^0|^{\frac{1}{2}}|Au^0|^{\frac{3}{2}}.
		\]
		
		Thus from (\ref{e92}) and (\ref{e93}) we infer that
		\begin{equation*}
			g\in\D(A^{\frac{1}{2}}).
		\end{equation*}
		Proposition \ref{p23} now yields
		\begin{align*}
			\mathcal{A} &\subset \D(A^{\frac{3}{2}}), 
		\end{align*}
		and that there exists a $\delta_3\in (0, \delta_2)$ such that any solution $u(t), t\in \mathbb{R}$ in $\mathcal{A}$ extends to a $\D(A^{\frac{3}{2}})_\mathbb{C}$- valued analytic function in the strip $\mathcal{S}(\delta_3):=\{\zeta\in\mathbb{C}:|\Im(\zeta)|<\delta_3\}$.
		
		We will proceed now by induction on $\alpha$; the induction assumption will be that for some $\alpha\geq 3$
		\begin{equation}
		\label{e105}
			\mathcal{A}\cap \D(A^{\frac{\alpha}{2}}) \neq \emptyset \Rightarrow \mathcal{A}\subset \D(A^{\frac{\alpha}{2}}).
		\end{equation}
		Assume that there exists a $u^0\in \mathcal{A}\cap \D(A^{\frac{\alpha+1}{2}})$ and that $u(t)$ is the solution of the NSE in $\mathcal{A}$ satisfying $u(0) = u^0$. The induction assumption implies that $Au\in\D(A^{\frac{\alpha-1}{2}})$ and $\frac{du}{dt}|_{t=0} = \frac{du}{d\zeta}|_{\zeta=0} \in \D(A^\frac{\alpha}{2})$.
		
		By Lemma \ref{l75}, we have that for $\forall\  w\in\D(A^{\alpha-1})$
		\begin{align*}
				|(B(u^0,u^0),A^{\alpha-1}w)|&\leq {2^{\alpha-1} c_A}\left(|u^0|^{\frac{1}{2}}|Au^0|^{\frac{1}{2}}|A^{\frac{\alpha}{2}}{u^0}|+|A^{\frac{\alpha-1}{2}}{u^0}||A^{\frac{1}{2}}{u^0}|^{\frac{1}{2}}|A^{\frac{3}{2}}{u^0}|^{\frac{1}{2}}\right)|A^{\frac{\alpha-1}{2}}{w}|.
		\end{align*}
		It follows that
		\[
				B(u^0,u^0)\in \D(A^{\frac{\alpha-1}{2}}).
		\]
		Thus we have that $g\in\D(A^{\frac{\alpha-1}{2}})$.
			Using Proposition \ref{p23} we finally obtain that (\ref{e105}) also holds for $\alpha+1$.
		This completes the proof.
		
\end{proof}

Observe that since
\[
	H \cap C^\infty([0,L]^2; \mathbb{R}^2) = \bigcap_{\alpha\in\mathbb{N}} \D(A^{\frac{\alpha}{2}})\;,
\]
Theorem \ref{t24} is a direct consequence of Proposition \ref{p91}.
\begin{remark}
	Proposition \ref{p91} and Theorem \ref{t24} assert that if there is one point in the attractor $\mathcal{A}$ belonging to a certain class (namely $H\cap H^\alpha([0,L]^2) (\alpha\in\mathbb{N})$ or $H\cap C^\infty([0,L]^2)$) then all points of $\mathcal{A}$ belong to this class. We will show that for the class defined in Definition \ref{d81} the ``All for one, one for all" law is also valid. Moreover, we expect that this law is ``almost'' universal, in that it holds for a variety of subsets of $H$.
\end{remark}

\section{$\bigcup_{\sigma>0}\mathcal{C}(\sigma)\subsetneqq
C^{\infty}([0,L]^2; \mathbb{R}^2)\cap H$}

A natural question one may ask regarding the newly defined classes
$\mathcal{C}(\sigma)$, $\sigma>0$, is: will the union of all the classes
$\mathcal{C}(\sigma)$ for $\sigma$ ranging in $(0, \infty)$ actually be the
same as the family of $C^{\infty}$ functions in $H$? The answer is no.

\begin{thm}
\label{th11-1}
$\bigcup_{\sigma>0}\mathcal{C}(\sigma)\subsetneqq C^{\infty}([0,L]^2;
\mathbb{R}^2)\cap H$
\end{thm}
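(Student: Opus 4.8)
The inclusion $\bigcup_{\sigma>0}\mathcal{C}(\sigma)\subseteq C^{\infty}([0,L]^2;\mathbb{R}^2)\cap H$ is immediate from Definition~\ref{d81}, so the whole content of the theorem is \emph{strictness}, and the plan is to produce one explicit $u\in C^{\infty}([0,L]^2;\mathbb{R}^2)\cap H$ lying in no $\mathcal{C}(\sigma)$. The guiding observation is that, discarding all but the $k$-th Fourier mode on the left of the defining inequality and then minimizing its right side over $\alpha$, a necessary condition for $u\in\mathcal{C}(\sigma)$ is that each coefficient obey $|\hat u(k)|^2\le \mathrm{const}\cdot e^{-(\ln|k|)^2/\sigma}$; thus membership in $\bigcup_{\sigma}\mathcal{C}(\sigma)$ morally forces Fourier coefficients to decay roughly like $e^{-c(\ln|k|)^2}$ for some $c>0$, whereas membership in $C^{\infty}([0,L]^2)\cap H$ only demands faster‑than‑polynomial decay, i.e.\ $|k|^m\hat u(k)\to 0$ for every $m$. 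The gap between these rates is populated, for instance, by coefficients of size $e^{-(\ln|k|)^{3/2}}$, and this is exactly what I would exploit.

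Concretely, I would set $b_m:=\exp\!\big(-\tfrac12(\ln m)^{3/2}\big)$ for integers $m\ge 2$ and take $u=(u_1,u_2)$ with $u_1\equiv 0$ and $u_2(x):=\sum_{m\ge 2}2b_m\cos(\ko m x_1)$. This $u$ is real‑valued and $\Omega$‑periodic, has zero mean, and is divergence free (its first component vanishes and its second depends only on $x_1$); its partial sums are $\mathbb{R}^2$‑valued trigonometric polynomials satisfying \eqref{ee31} and converge in $H$ since $\sum_m b_m^2<\infty$, so $u\in H$, and its only nonzero Fourier coefficients are $\hat u(\pm(m,0))=b_m(0,1)$, each orthogonal to $(m,0)$. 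Since $A^{\alpha/2}$ multiplies $\hat u(k)$ by $(\ko|k|)^{\alpha}$, Parseval yields
\[
  \frac{|A^{\alpha/2}u|^2}{\nu^2\ko^{2\alpha}}=\frac{2L^2}{\nu^2}\sum_{m\ge 2}m^{2\alpha}b_m^2=\frac{2L^2}{\nu^2}\sum_{m\ge 2}\exp\!\big(2\alpha\ln m-(\ln m)^{3/2}\big),
\]
and for each fixed $\alpha\in\mathbb{N}$ every term with $(\ln m)^{1/2}\ge 4\alpha$ is at most $m^{-2\alpha}\le m^{-2}$, so the series converges; hence $u\in\bigcap_{\alpha\in\mathbb{N}}\D(A^{\alpha/2})=C^{\infty}([0,L]^2;\mathbb{R}^2)\cap H$.

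To see $u\notin\mathcal{C}(\sigma)$ for an arbitrary fixed $\sigma>0$, I would keep only the $m$-th term above to get, for all $m\ge 2$ and $\alpha\in\mathbb{N}$,
\[
  \frac{|A^{\alpha/2}u|^2}{\nu^2\ko^{2\alpha}e^{\sigma\alpha^2}}\ \ge\ \frac{2L^2}{\nu^2}\,b_m^2\,\exp\!\big(2\alpha\ln m-\sigma\alpha^2\big),
\]
and then, for $m$ large, choose $\alpha=\alpha_m\in\mathbb{N}$ to be the integer nearest $(\ln m)/\sigma$. Writing the exponent as $(\ln m)^2/\sigma-\sigma\big(\alpha-(\ln m)/\sigma\big)^2$ gives $2\alpha_m\ln m-\sigma\alpha_m^2\ge(\ln m)^2/\sigma-\sigma/4$, so
\[
  \sup_{\alpha\in\mathbb{N}}\frac{|A^{\alpha/2}u|^2}{\nu^2\ko^{2\alpha}e^{\sigma\alpha^2}}\ \ge\ \frac{2L^2}{\nu^2}e^{-\sigma/4}\,b_m^2\,e^{(\ln m)^2/\sigma}=\frac{2L^2}{\nu^2}e^{-\sigma/4}\exp\!\Big((\ln m)^{3/2}\big(\tfrac{(\ln m)^{1/2}}{\sigma}-1\big)\Big)\xrightarrow[m\to\infty]{}\infty.
\]
Thus no finite $c_0=c_0(u)$ can satisfy the bound in Definition~\ref{d81}, so $u\notin\mathcal{C}(\sigma)$; since $\sigma>0$ was arbitrary, this finishes the proof.

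There is no genuine analytic difficulty here; the one point that must be handled with care is the \emph{calibration of the decay exponent} of $b_m$. One needs $\ln(1/b_m)/\ln m\to\infty$ (superpolynomial decay) to land in $C^{\infty}$, yet simultaneously $\ln(1/b_m)=o\big((\ln m)^2\big)$ — so that $(\ln m)^2/(2\sigma)$ dominates $\ln(1/b_m)$ for \emph{every} $\sigma>0$ — to stay outside all the classes $\mathcal{C}(\sigma)$; the choice $\ln(1/b_m)=\tfrac12(\ln m)^{3/2}$ is the simplest explicit one straddling both requirements. The sparse, one‑frequency‑per‑scale form of $u$ is merely a convenience: it trivializes the divergence‑free and reality constraints and reduces the lower bound to a single Fourier mode.
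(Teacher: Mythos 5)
Your proof is correct, but it takes a genuinely different route from the paper. The paper argues by contradiction: it passes to the scalar stream function $\psi$ (Proposition \ref{prop11-2}), uses the relation \eqref{stream} and the Sobolev embedding to convert the hypothetical bound $|A^{\alpha/2}u|\le \sqrt{c_0}\,e^{\sigma\alpha^2/2}\nu\ko^{\alpha}$ into $L^\infty$ control of all derivatives of $\psi$ of the form \eqref{star}, and then invokes Borel's theorem to manufacture a $C^\infty$ function whose Taylor coefficients at a point grow like $e^{\sigma|\beta|^4/2}$, violating that control after cutting off and periodizing. You instead exhibit an explicit counterexample entirely on the Fourier side: a shear-type field $u=(0,u_2(x_1))$ with lacunary coefficients $b_m=e^{-\frac12(\ln m)^{3/2}}$, which trivially satisfies \eqref{ee31}, lies in $\bigcap_{\alpha}\D(A^{\alpha/2})$ because $2\alpha\ln m-(\ln m)^{3/2}\le -2\alpha\ln m$ for $\ln m\ge 16\alpha^2$, and escapes every $\mathcal{C}(\sigma)$ by testing the single mode $k=(m,0)$ at $\alpha_m\approx(\ln m)/\sigma$ and completing the square in $\alpha$. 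All steps check out (the partial sums are divergence-free mean-zero trigonometric polynomials converging in $L^2$, so $u\in H$; the identity $H\cap C^\infty=\bigcap_\alpha\D(A^{\alpha/2})$ is the one the paper itself records after Proposition \ref{p91}; and $\alpha_m\in\mathbb{N}$ for $m$ large). Your approach is shorter, fully explicit, and avoids both the stream-function correspondence and Borel's theorem; it also isolates the precise dividing line, namely that membership in $\bigcup_{\sigma}\mathcal{C}(\sigma)$ forces Fourier decay of order $e^{-c(\ln|k|)^2}$ while $C^\infty$ only forces superpolynomial decay. What the paper's longer route buys is a physical-space reformulation of the classes in terms of pointwise growth of derivatives, which is conceptually closer to Gevrey-type characterizations, but for the purpose of proving strict inclusion your argument is the more economical one.
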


To prove this theorem, we assume that
$\bigcup_{\sigma>0}\mathcal{C}(\sigma)=C^{\infty}([0,L]^2;\mathbb{R}^2)\cap
H$ and we will arrive at a contradiction. To this end, we need the
following observation:

\begin{prop}
\label{prop11-2}
There exists a 1-1 correspondence between
$\dot{C}^{\infty}_{per}([0,L]^2;\mathbb{R})$ and
$C^{\infty}([0,L]^2;\mathbb{R}^2)\cap H$.
\end{prop}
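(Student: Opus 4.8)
The plan is to exhibit the required bijection explicitly through the classical two-dimensional stream function. Writing $\nabla^{\perp}=(-\partial/\partial x_{2},\ \partial/\partial x_{1})$, I would define
\[
\Phi:\dot{C}^{\infty}_{per}([0,L]^{2};\bR)\longrightarrow C^{\infty}([0,L]^{2};\mathbb{R}^{2})\cap H,\qquad \Phi(\psi):=\nabla^{\perp}\psi ,
\]
where $\dot{C}^{\infty}_{per}$ denotes the real $C^{\infty}$, $\Omega$-periodic scalars with $\int_{\Omega}\psi\,dx=0$. The first step is to check that $\Phi$ lands where claimed: $\nabla^{\perp}\psi$ is $C^{\infty}$ and $\Omega$-periodic, it is divergence free because $\nabla\cdot\nabla^{\perp}\psi=-\partial_{1}\partial_{2}\psi+\partial_{2}\partial_{1}\psi=0$, and each of its components is a pure first derivative of a periodic function and hence integrates to $0$ over $\Omega$; by the characterization \eqref{ee31} of $H$ this gives $\Phi(\psi)\in C^{\infty}([0,L]^{2};\mathbb{R}^{2})\cap H$. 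Injectivity is then immediate: if $\nabla^{\perp}\psi\equiv 0$ then $\psi$ is constant on $\Omega$, and the zero-mean normalization forces $\psi\equiv 0$.

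The substantive step is surjectivity, for which I would pass to Fourier series. Given $u\in C^{\infty}([0,L]^{2};\mathbb{R}^{2})\cap H$, expand $u(x)=\sum_{k\in\mathbb{Z}^{2}\setminus\{0\}}\hat u(k)e^{i\kappa_{0}k\cdot x}$; membership in $H$ gives $\hat u(0)=0$, $\hat u(-k)=\overline{\hat u(k)}$, and the orthogonality relation $k\cdot\hat u(k)=0$, while smoothness of $u$ makes $|\hat u(k)|$ decay faster than any power of $|k|$. Setting $k^{\perp}:=(-k_{2},k_{1})$, define
\[
\hat\psi(0):=0,\qquad \hat\psi(k):=\frac{k^{\perp}\cdot\hat u(k)}{i\kappa_{0}|k|^{2}}\ \ (k\neq 0),\qquad \psi(x):=\sum_{k\neq 0}\hat\psi(k)e^{i\kappa_{0}k\cdot x}.
\]
Then, using $(-k)^{\perp}=-k^{\perp}$, one verifies that $\hat\psi(-k)=\overline{\hat\psi(k)}$, so $\psi$ is real; the bound $|\hat\psi(k)|\le |\hat u(k)|/(\kappa_{0}|k|)$ inherits rapid decay, so $\psi\in C^{\infty}_{per}$; and $\hat\psi(0)=0$ gives $\psi\in\dot{C}^{\infty}_{per}$. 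Finally $\widehat{\nabla^{\perp}\psi}(k)=i\kappa_{0}k^{\perp}\hat\psi(k)=|k|^{-2}(k^{\perp}\cdot\hat u(k))\,k^{\perp}$, and since $\{k/|k|,\,k^{\perp}/|k|\}$ is an orthonormal frame of $\mathbb{R}^{2}$ and $k\cdot\hat u(k)=0$, this equals $\hat u(k)$; hence $\Phi(\psi)=u$. The same computation shows any two zero-mean preimages of $u$ differ by a constant and therefore coincide, so $\Phi$ is a bijection, as required.

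The main obstacle is conceptual rather than technical: in the surjectivity argument one must use the divergence-free constraint $k\cdot\hat u(k)=0$ in an essential way, since without it $\nabla^{\perp}\psi$ recovers only the ``rotational part'' of $u$. Tied to this is the familiar fact that a single-valued periodic stream function exists precisely because $\int_{\Omega}u\,dx=0$: for a divergence-free periodic field with nonzero mean the primitive $\psi$ would fail to be periodic, which is exactly why the zero-average condition built into $H$ through \eqref{ee31} cannot be dropped. The remaining points — legitimacy of term-by-term differentiation, reality of $\psi$, and the decay estimates — are immediate consequences of the rapid decay of the Fourier coefficients of $u$.
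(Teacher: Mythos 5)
Your proof is correct and follows essentially the same route as the paper: the forward map is the stream-function construction $u=\nabla^{\perp}\psi$ (up to an immaterial sign convention), and the inverse is recovered by solving for $\hat\psi(k)$ explicitly in Fourier space, exactly as in the paper's formula \eqref{stream}. Your write-up is in fact somewhat more complete, since you verify reality, rapid decay, injectivity via the zero-mean normalization, and the role of $k\cdot\hat u(k)=0$ in surjectivity, all of which the paper leaves implicit.
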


In the above proposition, $\dot{C}^{\infty}_{per}([0,L]^2;\mathbb{R})$
consists of elements in $C^{\infty}_{per}([0,L]^2;\mathbb{R})$ with
zero-average.

\begin{proof}
Let $\psi$ be any given element in
$\dot{C}^{\infty}_{per}([0,L]^2;\mathbb{R})$. Define $u=(u_1,u_2)$ by
setting $u_1=\frac{\partial\psi}{\partial x_2}$,
$u_2=-\frac{\partial\psi}{\partial x_1}$, then one immediately sees that
$u_1,u_2$ both belong to $C^{\infty}_{per}([0,L]^2;\mathbb{R})$, and
$\nabla \cdot u=0$, thus we get an element $u$ in
$C^{\infty}([0,L]^2;\mathbb{R}^2)\cap H$. (We recall that $\psi$ is called
the stream function corresponding to $u=(u_1,u_2)$).

Conversely, given $u=(u_1,u_2)$, with $u_1,u_2\in
C^{\infty}_{per}([0,L]^2;\mathbb{R})$ and $\nabla \cdot u=0$, we will get
the stream function $\psi$ by uniquely solving
\begin{equation*}
u_1=\frac{\partial\psi}{\partial x_2},  u_2=-\frac{\partial \psi}{\partial
x_1}.
\end{equation*}
From the above equation, we have $\frac{\partial u_1}{\partial
x_2}-\frac{\partial u_2}{\partial x_1}=\Delta \psi$. When we express this
equality in the form of Fourier series expansion, we uniquely determine the
coefficient $\hat{\psi}(k)$ for $\psi$ expressed in terms of the
coefficients of $u_1,u_2$ as follows:
\begin{equation}
\label{stream}
\hat{\psi}(k)=-\frac{k_2 \hat{u_1}(k)-k_1 \hat{u_2}(k)}{|k|^2},
\end{equation}
and hence the stream function corresponding to the function $u=(u_1,u_2)$
is 
\begin{equation*}
\psi(x)=\sum_{k\in \mathbb{Z}^2\setminus\{0\}}\hat{\psi}(k)e^{i\kappa_0
k\cdot x}=\sum_{k\in \mathbb{Z}^2\setminus\{0\}}   -\frac{k_2
\hat{u_1}(k)-k_1 \hat{u_2}(k)}{|k|^2} e^{i\kappa_0 k\cdot x}.
\end{equation*}
Therefore, the one-to-one correspondence between these two families is
established.
\end{proof}

\begin{proof}[Proof of Theorem \ref{th11-1}]
Given an arbitrary $u \in
C^{\infty}([0,L]^2;\mathbb{R}^2)\cap H$, suppose that $u\in
\mathcal{C}(\sigma)$, for some $\sigma>0$. Let $\psi$ be the corresponding
stream function, invoking the relation (\ref{stream}), one can show that
\begin{equation*}
|(-\Delta)^{\frac{\alpha}{2}}\psi|_{L^2}\leq
|A^{\frac{\alpha-1}{2}}u|_{L^2}\leq \sqrt{c_0}e^{\sigma (\alpha-1)^2/2}\nu
\kappa_0^{\alpha-1},
\end{equation*}
where the constant $c_0$ is from the definition of the class.
Equivalently, 
\begin{equation*}
|(-\Delta)^{\frac{\alpha+1}{2}}\psi|_{L^2}\leq \sqrt{c_0}e^{\sigma
\alpha^2/2}\nu \kappa_0^{\alpha}, \quad \forall\  \alpha\geq 0.
\end{equation*}
It follows from the above inequality and the Sobolev embedding theorem
that, 
\begin{equation*}
|(-\Delta)^{\frac{\alpha-1}{2}}\psi|_{L^{\infty}}\leq
c_1|(-\Delta)^{\frac{\alpha-1}{2}}\psi|_{H^2}\leq
c_2|(-\Delta)^{\frac{\alpha+1}{2}}\psi|_{L^2}\leq c_3 e^{\sigma
\alpha^2/2}\nu \kappa_0^{\alpha}, \quad \forall\ \ \alpha\geq 1.
\end{equation*}
where $c_1,c_2,c_3$ are non-dimensional constants.

The one-to-one correspondence in Proposition \ref{prop11-2} and the
inequality given above imply that, for any $\psi \in
\dot{C}^{\infty}_{per}([0,L]^2;\mathbb{R})$, we have
\begin{equation}
\label{star}
|(-\Delta)^{\frac{\alpha-1}{2}}\psi|_{L^{\infty}}\leq c e^{\sigma
\alpha^2/2}\nu \kappa_0^{\alpha},\quad \forall\  \alpha\geq 1.
\end{equation} 
for some constant $c$. Notice that
$|(-\Delta)^{\frac{\alpha-1}{2}}\psi|_{L^{\infty}}$ controls the magnitude
of all the derivatives of the function $\psi$ up to order $\alpha-1$, so
this inequality says that the coefficients in the Taylor series expansion
of every function $\psi \in \dot{C}^{\infty}_{per}([0,L]^2;\mathbb{R})$
must have an estimate of the form $e^{\sigma \alpha^2/2}$.

However, we can easily construct a function $\eta \in
\dot{C}^{\infty}_{per}([0,L]^2;\mathbb{R})$ which does not have this kind of
estimate for the coefficients of its Taylor series expansion. To find such
a $C^{\infty}$ function, we choose $x_0$ to be the center of the square
$[0,L]^2$, and for any given $C^{\infty}$ function $\psi$ consider first
the following power series
\begin{equation*}
\sum_{\beta \in \mathbb{N}^2}\frac{1}{\beta!}e^{\sigma
|\beta|^4/2}(x-x_0)^{\beta}
\end{equation*}
It follows from Borel's Theorem (see page 381 in \cite{R70}) that this series defines a
$C^{\infty}$ function, denoted by $\psi$ on, say the disk $D(x_0,r_0)$, for
some $r_0 < \frac{L}{2\sqrt{2}}$. Moreover, we extend the definition $\psi$
to $\mathbb{R}^2$ by setting $\psi=0$ outside this disk.

Then choose another function $\psi \in C^{\infty}([0,L]^2)$ such that
$\psi=1$ on $D(x_0,r_0/3)$, with $supp(\psi)\subset D(x_0,2r_0/3)$ and
$0\leq \psi \leq 1$. 
Finally, let $\eta_1=\psi\psi$, and define
$\eta=\eta_1-\int_{[0,L]^2}\eta_1(x)dx$, then extend the definition of
$\eta$ to $\mathbb{R}^2$ by periodicity, to get  $\eta \in
\dot{C}^{\infty}_{per}([0,L]^2;\mathbb{R})$. Now the function $\eta$ has
coefficients in its Taylor expansion which contradict the estimates in (\ref{star}). 
Hence, the assumption
$\bigcup_{\sigma>0}\mathcal{C}(\sigma)=C^{\infty}([0,L]^2)\cap H$ is not true,
and the theorem is proved.
\end{proof}

A second question regarding the classes defined in this paper concerns the
possibility that $\mathcal{C}(\sigma_1)=\mathcal{C}(\sigma_2)$, for some
$\sigma_1<\sigma_2$. Equivalently, does the family of the classes
$\{\mathcal{C}(\sigma)\}_{\sigma>0}$ form an increasing (or decreasing)
family? 
We answer this question by proving the following proposition.

\begin{prop}
\label{increasing}
For the family of classes $\{\mathcal{C}(\sigma)\}_{\sigma>0}$, we have,
\begin{equation*}
\mathcal{C}(\sigma_1)\subsetneq \mathcal{C}(\sigma_2), \quad \forall\ \ 
\sigma_1<\sigma_2.
\end{equation*}
\end{prop}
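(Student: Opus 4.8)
The inclusion $\mathcal{C}(\sigma_1)\subseteq\mathcal{C}(\sigma_2)$ is immediate from Definition \ref{d81}: $\sigma_1<\sigma_2$ forces $e^{\sigma_1\alpha^2}\leq e^{\sigma_2\alpha^2}$, so the same constant $c_0$ works. The content of the proposition is therefore the strictness, and the plan is to produce a single $u\in\mathcal{C}(\sigma_2)\setminus\mathcal{C}(\sigma_1)$. First I would fix $\mu$ with $1/\sigma_2<\mu<1/\sigma_1$ (possible since $\sigma_1<\sigma_2$), set $\sigma:=1/\mu\in(\sigma_1,\sigma_2)$, and take for $u$ the divergence-free, real, zero-average field carried by the lacunary set of modes $(\pm n,0)$, $n\geq 1$,
\[
u(x):=\sum_{n\geq 1}\bigl(0,\,2a_n\cos(\ko n x_1)\bigr),\qquad a_n:=e^{-\frac{\mu}{2}(\ln n)^2},
\]
so that $\hat u((\pm n,0))=(0,a_n)\perp(\pm n,0)$. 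Since $a_n$ decays faster than any power of $n$, one has $|A^{\alpha/2}u|<\infty$ for every $\alpha$, hence $u\in\bigcap_{\alpha}\D(A^{\alpha/2})=H\cap C^\infty([0,L]^2)$.

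By Parseval, $|A^{\alpha/2}u|^2=2L^2\ko^{2\alpha}\sum_{n\geq 1}a_n^2 n^{2\alpha}$, so
\[
\frac{|A^{\alpha/2}u|^2}{\nu^2\ko^{2\alpha}}=\frac{2L^2}{\nu^2}\sum_{n\geq 1}e^{-\mu(\ln n)^2+2\alpha\ln n},
\]
and with $t=\ln n$ the exponent is $-\mu(t-\alpha/\mu)^2+\alpha^2/\mu$, maximized at $t=\alpha/\mu$. Keeping only the single term with $n$ the nearest integer to $e^{\alpha/\mu}$ gives, for $\alpha$ large, $|A^{\alpha/2}u|^2/(\nu^2\ko^{2\alpha})\geq c'\,e^{\alpha^2/\mu}=c'\,e^{\sigma\alpha^2}$ with $c'>0$; since $\sigma>\sigma_1$ the ratio $e^{\sigma\alpha^2}/e^{\sigma_1\alpha^2}\to\infty$, so no constant $c_0$ can bound $|A^{\alpha/2}u|^2/(\nu^2\ko^{2\alpha})$ by $c_0 e^{\sigma_1\alpha^2}$, and thus $u\notin\mathcal{C}(\sigma_1)$. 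In the other direction, a Laplace-type estimate (described below) gives $\sum_{n\geq 1}a_n^2 n^{2\alpha}\leq C\,e^{\alpha^2/\mu+O(\alpha)}$; since $e^{O(\alpha)}\leq C_\epsilon e^{\epsilon\alpha^2}$ for every $\epsilon>0$, picking $\epsilon=\tfrac{1}{2}(\sigma_2-\sigma)$ yields $|A^{\alpha/2}u|^2/(\nu^2\ko^{2\alpha})\leq c_0 e^{\sigma_2\alpha^2}$ for all large $\alpha$, and after enlarging $c_0$ to absorb the finitely many small $\alpha$ we get $u\in\mathcal{C}(\sigma_2)$. Together these prove $\mathcal{C}(\sigma_1)\subsetneq\mathcal{C}(\sigma_2)$.

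The main obstacle is the upper bound $\sum_{n\geq 1}a_n^2 n^{2\alpha}\leq C\,e^{\alpha^2/\mu+O(\alpha)}$, i.e.\ showing the series stays concentrated near its maximizing index $n\approx e^{\alpha/\mu}$. One route: write the sum as $e^{\alpha^2/\mu}\sum_{n\geq 1}e^{-\mu(\ln n-\alpha/\mu)^2}$ and bound the last sum (unimodal in $n$, with maximum $1$) by a constant times $1+\int_0^\infty e^{-\mu(\ln x-\alpha/\mu)^2}\,dx$, which via $x=e^s$ is a Gaussian integral of size $\sqrt{\pi/\mu}\,e^{1/(4\mu)}e^{\alpha/\mu}$. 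Alternatively, split at $\ln n=4\alpha/\mu$: the at most $e^{4\alpha/\mu}$ integers with $\ln n\leq 4\alpha/\mu$ each contribute $\leq e^{\alpha^2/\mu}$, while for $\ln n>4\alpha/\mu$ the exponent is $\leq-\tfrac{\mu}{2}(\ln n)^2$, so the tail is dominated by the fixed convergent series $\sum_{n\geq 1}e^{-\frac{\mu}{2}(\ln n)^2}$. Either way the bound is $e^{\alpha^2/\mu}\cdot e^{O(\alpha)}$, which is exactly what is needed once $\mu$ has been chosen strictly between $1/\sigma_2$ and $1/\sigma_1$; the remaining manipulations are routine.
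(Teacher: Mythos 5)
Your proposal is correct, but it proves strictness by a genuinely different route than the paper. The paper argues by contradiction: it observes that $|\cdot|_{\mathcal{C}_{\sigma_1}}$ and $|\cdot|_{\mathcal{C}_{\sigma_2}}$ are Banach norms, so equality of the two classes as sets would force the two norms to be equivalent (open mapping theorem), and it then defeats this by testing on functions supported on a single eigenvalue shell $\kappa_0^2|k|^2=\Lambda$, for which (after extending the definition of $|\cdot|_{\mathcal{C}_\sigma}$ to real $\alpha$ by interpolation) the norm ratio is exactly $e^{\frac{\sigma_2-\sigma_1}{8\sigma_1\sigma_2}(\ln\Lambda)^2}\to\infty$. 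You instead exhibit an explicit witness $u\in\mathcal{C}(\sigma_2)\setminus\mathcal{C}(\sigma_1)$: a lacunary series with coefficients $a_n=e^{-\frac{\mu}{2}(\ln n)^2}$, $1/\sigma_2<\mu<1/\sigma_1$, whose $\D(A^{\alpha/2})$ norms you control from above and below by a Laplace-type analysis of $\sum_n e^{-\mu(\ln n)^2+2\alpha\ln n}$ (the single dominant term near $n\approx e^{\alpha/\mu}$ gives the lower bound $c'e^{\alpha^2/\mu}$; the Gaussian-integral or splitting argument gives the matching upper bound $e^{\alpha^2/\mu+O(\alpha)}$, and the $e^{O(\alpha)}$ is absorbed into $e^{\epsilon\alpha^2}$). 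Your construction checks out: the field $(0,2a_n\cos(\ko n x_1))$ is real, divergence-free, mean-zero, and lies in every $\D(A^{\alpha/2})$ since $a_n$ decays superpolynomially. What each approach buys: the paper's argument is shorter and leans on soft functional analysis plus a one-shell computation, but is indirect and only shows the sets cannot coincide; yours is entirely elementary (no Banach space structure, no extension of the definition to real $\alpha$) and produces a concrete element of the difference, which is strictly more information. In fact your single function, with $\mu$ varying, shows every strict intermediate class $\mathcal{C}(1/\mu)$ is realized, a refinement the paper's proof does not give.
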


\begin{proof}
We assume by contradiction that 
\begin{equation*}
\mathcal{C}(\sigma_1)= \mathcal{C}(\sigma_2),
\end{equation*}
for some  $\sigma_1<\sigma_2$.
Since both $\mathcal{C}(\sigma_1)$ and $\mathcal{C}(\sigma_2)$ are Banach
spaces, if it happens that these two are the same, then their norms are
comparable; i.e., there exist $m>0, M>0$ such that
\begin{equation}
\label{increasing property}
m\leq
\frac{|u|_{\mathcal{C}_{\sigma_1}}}{|u|_{\mathcal{C}_{\sigma_2}}}\leq M,
\end{equation}
holds for all $u$, where $|\cdot|_{\mathcal{C}_\sigma}$ is defined in (\ref{def-norm}).

Initially, in the definition of the class $\mathcal{C}(\sigma)$, we
allow only  $\alpha\in \mathbb{N}$; however, we are going to show that we could
actually extend this definition by taking $\alpha \in \mathbb{R}$. Indeed,
for any $\alpha <\beta < \alpha+1$, by interpolation we have 
\begin{equation*}
|A^{\frac{\beta}{2}}u|\leq
|A^{\frac{\alpha}{2}}u|^{\alpha+1-\beta}|A^{\frac{\alpha+1}{2}}u|^{\beta-\alpha}.
\end{equation*}
One could easily obtain, using the estimate $|A^{\frac{\alpha}{2}}u|^2\leq
c_0 e^{\sigma \alpha^2}\nu^2 \kappa_0^{2\alpha}$, the following 
\begin{equation*}
|A^{\frac{\beta}{2}}u|^2\leq {c_0}' e^{\sigma \beta^2}
\nu^2\kappa_0^{2\beta}
\end{equation*}
where ${c_0}'=c_0e^{\sigma/4}$. Notice that the change of constant $c_0$
does not depend on the values of $\alpha$ and $\beta$.

Assuming that such a modification in our definition of the class
$\mathcal{C}(\sigma)$ has been made, we take in particular the
function $u(x)=\sum_{\{k:\kappa_0^2 |k|^2=\Lambda\}}\hat{u}(k)e^{i\kappa_0
k\cdot x}$, for some fixed $\Lambda$. After a direct calculation, one gets
\begin{equation*}
\frac{|u|_{\mathcal{C}_{\sigma_1}}}{|u|_{\mathcal{C}_{\sigma_2}}}=e^{\frac{(\ln
\Lambda)^2}{8\sigma_1}-\frac{(\ln
\Lambda)^2}{8\sigma_2}}=e^{\frac{\sigma_2-\sigma_1}{8\sigma_1 \sigma_2}
(\ln \Lambda)^2}.
\end{equation*}
As $\Lambda \rightarrow \infty$, we have that
\begin{equation*}
\frac{|u|_{\mathcal{C}_{\sigma_1}}}{|u|_{\mathcal{C}_{\sigma_2}}}\rightarrow
\infty\;.
\end{equation*}
Thus, we see that the constant $M>0$ in (\ref{increasing property}) cannot
exist.
\end{proof}

\section{``One for all and all for one'' law for $ \bigcup_{\sigma>0}C(\sigma)  $}
\label{sec:oneall}
The main result in this section is simple to state.

\begin{thm}
\label{t121}
	If $ \mathcal{A}\cap \bigcup_{\sigma>0}\mathcal{C}(\sigma) \neq \emptyset$, then $ \mathcal{A} \subset \bigcup_{\sigma>0}\mathcal{C}(\sigma)$.
\end{thm}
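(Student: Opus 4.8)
The plan is to prove the stronger statement that $\mathcal{A}\subset\mathcal{C}(\sigma)$ for a \emph{single} $\sigma>0$; since $\mathcal{C}(\sigma)\subset\bigcup_{\sigma'>0}\mathcal{C}(\sigma')$ this clearly suffices. Fix $u^0\in\mathcal{A}\cap\mathcal{C}(\sigma_0)$ for some $\sigma_0>0$. In particular $u^0\in C^\infty([0,L]^2)$, so Theorem~\ref{t24} gives $\mathcal{A}\cup\{g\}\subset C^\infty([0,L]^2)$; hence $g\in\D(A^{\gamma/2})$ for every $\gamma\in\mathbb{N}$, and Proposition~\ref{p23} applies for every $\alpha$, so each orbit in $\mathcal{A}$ already extends to a $\D(A^{(\alpha+1)/2})_\mathbb{C}$-valued analytic function on some strip. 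The difficulty is that the a priori bounds produced by Proposition~\ref{p23} are governed by the higher norms of $g$, which are themselves unknown; I would therefore run a \emph{simultaneous} induction on $\alpha$ controlling the orbits and the force at once.

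Concretely, adopting the shrinking-strip device of Theorem~\ref{t714} ($\delta_{\alpha+1}:=\delta_\alpha/2$), I would establish, for all $\alpha\in\mathbb{N}$ and every solution $u(\cdot)$ in $\mathcal{A}$, the coupled estimates
\begin{equation*}
|A^{\frac{\alpha-1}{2}}g|\le a_\alpha\,\nu^2\ko^{\alpha+1},\qquad
|A^{\frac{\alpha}{2}}u(\zeta)|\le r_\alpha\,\nu\ko^{\alpha}\quad(\zeta\in\mathcal{S}(\delta_\alpha)),
\end{equation*}
where $a_\alpha,r_\alpha$ satisfy recursions whose solutions obey $a_\alpha,r_\alpha\le c\,K^{\alpha^2/2}$ for constants $c,K$ depending only on $G$, $\sigma_0$ and the constant $c_0(u^0)$ from Definition~\ref{d81}. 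The base cases $\alpha=1,2,3$ go as follows: Lemma~\ref{lemma54}(i) gives the bound on $|A^{1/2}u(\zeta)|$ directly (its proof does not use $0\in\mathcal{A}$); evaluating the NSE at $\zeta=0$ along the orbit through $u^0$ yields $g=\partial_\zeta u(0)+\nu Au^0+B(u^0,u^0)$, one bounds $|A^{1/2}\partial_\zeta u(0)|$ by the Cauchy integral formula on $\mathcal{S}(\delta_1)$ and the remaining two terms directly from $u^0\in\mathcal{C}(\sigma_0)$ (for the bilinear term via the inequality appearing in the proof of Proposition~\ref{p91}), obtaining an explicit $a_2$; then the proofs of Lemmas~\ref{lemma58} and~\ref{lemma59} may be re-run with their $0\in\mathcal{A}$-dependent bound on $|A^{1/2}g|$ replaced by $a_2\nu^2\ko^3$, producing $r_2,r_3$ and $a_3,a_4$.

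For the inductive step $\alpha\to\alpha+1$: bound $a_{\alpha+1}$ by evaluating the NSE at $\zeta=0$ exactly as above, where now $|A^{\alpha/2}\partial_\zeta u(0)|$ is controlled by $\delta_\alpha^{-1}r_\alpha\nu\ko^\alpha$ (Cauchy formula plus the step-$\alpha$ bound for $u(\zeta)$) and $\nu|A^{(\alpha+2)/2}u^0|$, $|A^{\alpha/2}B(u^0,u^0)|$ directly from $u^0\in\mathcal{C}(\sigma_0)$ together with Lemma~\ref{l75}; then bound $r_{\alpha+1}$ by taking the inner product of the complexified NSE with $A^{\alpha+1}u$, estimating the nonlinear term by Lemma~\ref{l46} (coefficient $\Gamma_{\alpha+1}$ depending only on the fixed $r_1,r_2,r_3$), absorbing the dissipation and the force term $|A^{\alpha/2}g|$ by Young's inequality, integrating the resulting differential inequality over the sector of radius $\sqrt2\,\delta_{\alpha+1}$, and applying the mean-value estimate as in the proofs of Lemma~\ref{lemma59} and Theorem~\ref{t714}. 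This yields recursions of the form $a_{\alpha+1}\le C\bigl(2^\alpha r_\alpha+(\text{explicit }\mathcal{C}(\sigma_0)\text{-type bound})\bigr)$ and $r_{\alpha+1}^2\le C\bigl(\Gamma_\alpha r_\alpha^2+a_{\alpha+1}^2\bigr)$, with $\Gamma_\alpha$ of order $4^\alpha$ and $\delta_\alpha^{-1}$ of order $2^\alpha$. Feeding the target bounds in closes the induction once $K$ is large enough (in particular $K\ge 2$ and $K\ge e^{\sigma_0}$), because the gap $K^{(\alpha+1)^2/2}/K^{\alpha^2/2}=K^{(2\alpha+1)/2}$ is exponentially large in $\alpha$ and absorbs the $2^\alpha,4^\alpha$ and index-shift losses. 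Taking $\zeta=0$ in the second estimate then gives $|A^{\alpha/2}p|^2\le c^2K^{\alpha^2}\nu^2\ko^{2\alpha}$ for every $p\in\mathcal{A}$, i.e. $p\in\mathcal{C}(\ln K)$; hence $\mathcal{A}\subset\mathcal{C}(\ln K)\subset\bigcup_{\sigma>0}\mathcal{C}(\sigma)$.

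The hard part is the self-consistency of this twin recursion: one must verify that the exponential-in-$\alpha$ loss factors incurred at each step — $2^\alpha$ from $\delta_\alpha^{-1}$ in the Cauchy bound for $g$, the coefficient $\Gamma_\alpha\sim 4^\alpha$ of the nonlinear term, the factor $2^\alpha$ in Lemma~\ref{l75}, and the index shifts $\alpha\mapsto\alpha+1,\alpha+2$ — are all dominated by the super-exponential growth $K^{(2\alpha+1)/2}$ built into the target bound, and, in parallel, that shrinking the strip geometrically (as opposed to keeping it fixed, which by Proposition~\ref{c73} would force a doubly exponential bound outside every $\mathcal{C}(\sigma)$) does not make $\delta_\alpha$ decay so fast that the Cauchy estimate for $g$ is spoiled. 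This is essentially the bookkeeping already carried out in Theorem~\ref{t714} and Proposition~\ref{c73}; the only new feature is that, because $u^0$ need not be $0$, the force picks up the extra terms $\nu Au^0+B(u^0,u^0)$, and one checks — immediate from $u^0\in\mathcal{C}(\sigma_0)$ — that these merely enlarge $K$ to about $e^{\sigma_0}$ without affecting the $K^{\alpha^2/2}$ structure.
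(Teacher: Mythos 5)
Your proposal is correct and follows essentially the same route as the paper: the paper's Lemmas 12.2--12.4 implement exactly your twin recursion, bounding the force norms $G_\alpha$ by evaluating the NSE at the special point $u_0\in\mathcal{C}(\sigma)$ (Cauchy formula for $\partial_\zeta u(0)$, the $\mathcal{C}(\sigma)$ bound for $\nu Au_0$, and Lemma \ref{l46} for $B(u_0,u_0)$), feeding these into the orbit bounds $M_\alpha$ obtained on the geometrically shrinking strips $\delta_{\alpha+1}=\delta_\alpha/2$, and verifying that the $4^\alpha$-type losses from $\Gamma_\alpha$ and $\delta_\alpha^{-1}$ are absorbed by the $e^{c\alpha^2}$ ansatz, yielding $\mathcal{A}\subset\mathcal{C}(\sigma_3)$ for a single explicit $\sigma_3$.
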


The proof is a consequence of the following three lemmas.

\begin{lem}
\label{l12-2}
Let $u(t), t\in \mathbb{R}$ be any solution of the NSE in $\mathcal{A}$.
If $g\in C^\infty([0,L]^2)$, then for any $\alpha\in\mathbb{N}$,  $u$ is a $\D(A^{\frac{\alpha}{2}})_\mathbb{C}$-valued analytic function in some strip $\mathcal{S}(\delta_\alpha)$ and satisfies
	\begin{equation*}
		|A^{\frac{\alpha}{2}}u(\zeta)| \leq M_{\alpha}\nu\ko^\alpha, \,\, \forall\  \zeta\in \mathcal{S}(\delta_\alpha).
	\end{equation*}
	
	In particular, for $\alpha\geq3$, we can choose 	 
\begin{equation}
\label{e12-2}
\delta_{\alpha+1} := \frac{\delta_{\alpha}}{2},  
\end{equation}
to obtain that 
	\begin{equation}
	\label{e12-3}
		M_{\alpha+1} := \frac{8\sqrt{2}}{\pi}\left(\frac{M^2_\alpha}{\delta_\alpha\nu\ko^2}+4G^2_{\alpha-1}+\sqrt{2}\Gamma_\alpha M^2_\alpha\right)^{\frac{1}{2}},
\end{equation}		 
\begin{equation}
\label{e12-4}
	G_\alpha := \frac{|A^{\frac{\alpha}{2}}g|}{\nu^2\ko^{\alpha+2}},
\end{equation}
and 
	\begin{equation} 
	\label{e125}
	\Gamma_\alpha:=
		\begin{cases} 3^3\cdot 2^{{31}/{2}} c_L^8M_1^2 &\text{if } \alpha=3,\\
		2^{\alpha+\frac{3}{2}} c_A[2^{\alpha+2}c_AM_1M_2+\sqrt{M_1M_3}] &\text{otherwise}.
		\end{cases}
	\end{equation}

\end{lem}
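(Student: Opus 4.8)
The plan is to run an induction on $\alpha$ that copies the scheme of Section~7 (Lemmas \ref{l72}, \ref{l73}) and its refinement Theorem \ref{t714}, with the single change that the hypothesis $0\in\mathcal{A}$ used there to feed Lemma \ref{l71} is replaced by the hypothesis $g\in C^\infty([0,L]^2)$. The whole point is that smoothness of $g$ gives directly $g\in\D(A^{(\alpha-1)/2})$ and $g\in\D(A^{\alpha/2})$ for \emph{every} $\alpha\in\mathbb{N}$, so the quantities $G_\alpha$ of (\ref{e12-4}) are automatically finite and Lemma \ref{l71} becomes unnecessary. For the base cases $\alpha\in\{1,2,3\}$ one applies Proposition \ref{p23} with $g\in C^\infty([0,L]^2)\subset\D(A^{(\alpha-1)/2})$: this furnishes a strip $\mathcal{S}(\delta_\alpha)$ on which $u(\zeta)$ extends to a $\D(A^{(\alpha+1)/2})_{\mathbb C}$-valued analytic function with $|A^{\alpha/2}u(\zeta)|\le m_\alpha\nu\ko^\alpha$; set $M_\alpha:=m_\alpha$. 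Shrinking if necessary we may take $\delta_3\le\delta_2\le\delta_1$, so that the three bounds $|A^{1/2}u|\le M_1\nu\ko$, $|Au|\le M_2\nu\ko^2$, $|A^{3/2}u|\le M_3\nu\ko^3$ all hold throughout $\mathcal{S}(\delta_3)$, hence on every $\mathcal{S}(\delta_\alpha)$, $\alpha\ge 3$, once we impose $\delta_{\alpha+1}=\delta_\alpha/2$ as in (\ref{e12-2}).

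For the inductive step $\alpha\to\alpha+1$ with $\alpha\ge 3$, assume $u(\zeta)$ is $\D(A^{\alpha/2})_{\mathbb C}$-analytic on $\mathcal{S}(\delta_\alpha)$ with $|A^{\alpha/2}u(\zeta)|\le M_\alpha\nu\ko^\alpha$. Following the short procedure of Remark \ref{r52}, work along a ray $\zeta=t_0+\rho e^{i\theta}$ with $|\theta|\le\pi/4$ and take the inner product of (\ref{annse}) with $A^\alpha u$. Bound the forcing term as $|(g,A^\alpha u)|=|(A^{(\alpha-1)/2}g,A^{(\alpha+1)/2}u)|$ by Young's inequality, and the nonlinearity $|(B(u,u),A^\alpha u)|$ by Lemma \ref{l46} when $\alpha>3$ (respectively by the integration-by-parts estimate that produced (\ref{e710}) when $\alpha=3$), inserting the fixed bounds $M_1,M_2,M_3$ in place of $|A^{1/2}u|,|Au|,|A^{3/2}u|$. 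One further use of Young's inequality to absorb the dissipation $|A^{(\alpha+1)/2}u|^2$ yields, with $\Gamma_\alpha$ precisely as in (\ref{e125}), a differential inequality of the form $\frac{d}{d\rho}|A^{\alpha/2}u|^2+c\,\nu|A^{(\alpha+1)/2}u|^2\le \frac{\sqrt2}{\nu}|A^{(\alpha-1)/2}g|^2+2\Gamma_\alpha\nu\ko^2|A^{\alpha/2}u|^2$.

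Integrating this over $\rho\in[0,\sqrt2\,\delta_\alpha]$ (using $\cos\theta\ge\sqrt2/2$, the inductive bound on $|A^{\alpha/2}u|$, and $|A^{(\alpha-1)/2}g|=G_{\alpha-1}\nu^2\ko^{\alpha+1}$) bounds $\int_0^{\sqrt2\delta_\alpha}|A^{(\alpha+1)/2}u(t_0+\rho e^{i\theta})|^2\,d\rho$ by a quantity $N_\alpha\nu^2\ko^{2(\alpha+1)}$ with $N_\alpha$ comparable to $M_\alpha^2/(\delta_\alpha\nu\ko^2)+4G_{\alpha-1}^2+\sqrt2\,\Gamma_\alpha M_\alpha^2$. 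Then, exactly as in the proof of Lemma \ref{lemma54} — the polygon $abcdef$, the Schwarz reflection principle and the mean value property, now applied to the analytic function $A^{(\alpha+1)/2}u$ with the roles of $\delta_1$ and of the integral bound there played by $\delta_{\alpha+1}=\delta_\alpha/2$ and by the estimate just obtained — one gets $|A^{(\alpha+1)/2}u(t_0)|\le M_{\alpha+1}\nu\ko^{\alpha+1}$ with $M_{\alpha+1}$ as in (\ref{e12-3}); running this over all disks centred on $\mathbb R$ (equivalently using disks inside the sectors, as in the $\alpha=3$ case of Lemma \ref{lemma59}) upgrades it to the bound throughout $\mathcal{S}(\delta_{\alpha+1})$. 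The $\D(A^{(\alpha+1)/2})_{\mathbb C}$-analyticity itself follows from the uniformity of these a priori estimates along the Galerkin approximations, as recalled in Remark \ref{r52}.

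The main obstacle is not analytic but bookkeeping: one must check that every disk and sector appearing in the mean value/reflection step stays inside the strip on which the relevant bounds ($M_1,M_2,M_3$, and $M_\alpha$) are valid — this is precisely what forces the choice $\delta_{\alpha+1}=\delta_\alpha/2$ and, just as for Theorem \ref{t714}, keeps the recursion free of any Gronwall exponential, in contrast to Lemma \ref{l73} — and then to carry the constants through carefully enough to land on exactly (\ref{e12-3}) together with the two cases of (\ref{e125}). No idea beyond those already used for Theorems \ref{t2} and \ref{t714} is needed; the conceptual simplification is only that $g\in C^\infty$ renders Lemma \ref{l71} superfluous.
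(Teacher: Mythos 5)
Your proposal is correct and follows essentially the same route as the paper: base cases $\alpha=1,2,3$ from the earlier estimates (the paper re-runs the arguments of Lemmas \ref{lemma54}, \ref{lemma58}, \ref{lemma59} rather than citing Proposition \ref{p23}, but both rest on the same computations, with $g\in C^\infty$ replacing the role of $0\in\calA$), then induction via the differential inequality \eqref{e714} with $\Gamma_\alpha$ built from $M_1,M_2,M_3$, integration along rays, and the mean-value/disk argument with $\delta_{\alpha+1}=\delta_\alpha/2$ exactly as in Theorem \ref{t714}, which is what avoids the Gronwall exponential of Lemma \ref{l73}. The constants you describe land on \eqref{e12-3} and \eqref{e125} as in the paper.
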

\begin{proof}
	Since $g\in C^\infty([0,L]^2)$, using the same argument as in the proof of Lemma \ref{lemma54}, Lemma \ref{lemma58} and Lemma \ref{lemma59}, we know that for $\alpha = 1, 2, 3$, there exist $\delta_\alpha$ and $M_\alpha$ such that $u(\zeta)$ is  $\D(A^{\frac{\alpha}{2}})_\mathbb{C}$-valued analytic in the strip $\mathcal{S}(\delta_\alpha)$ and satisfies the following estimate
\begin{equation*}
	|A^{\frac{\alpha}{2}}u(\zeta)| \leq M_{\alpha}\nu\ko^\alpha, \,\, \forall\ \ \zeta\in \mathcal{S}(\delta_\alpha).
\end{equation*}	
The only difference is that $\delta_2, M_2$, and $M_3$ depend on $G_1$.

	For $\alpha \geq 3$,  we have the following inequality (\ref{e714})
	\begin{align*} 
		\frac{1}{2}\frac{d}{d\rho}|A^\frac{\alpha}{2} u(t_{0}+\rho e^{i\theta})|^2 &+ \nu \frac{\sqrt{2}}{4}|A^{\frac{\alpha+1}{2}}u(\zeta)|^2 \leq \frac{2}{\nu\sqrt{2}}|A^{\frac{\alpha-1}{2}}g|^2+\nu\ko^2\Gamma_{\alpha}|A^\frac{\alpha}{2} u|^2, \nonumber
	\end{align*}
where $\Gamma_\alpha$ is defined as in the Lemma \ref{l72}, and the only difference is that it depends on $M_\alpha$, not $\Rt{\alpha}$, see (\ref{e125}).
 It follows that
	\begin{align*}
		\int_0^{\sqrt{2}\delta_\alpha}|A^{\frac{\alpha+1}{2}}u(\zeta)||_{\zeta=t+\rho e^{i\frac{\pi}{4}}}d\rho &\leq \sqrt{2}\left(M_\alpha+4G^2_{\alpha-1}\nu\ko^2\delta_\alpha +\sqrt{2}\nu\ko^2\delta_\alpha\Gamma_\alpha M_\alpha^2\right)\nu\ko^{2\alpha}.
	\end{align*}
	
	Proceeding as in the proof of Theorem \ref{t714}, i.e. choosing $\delta_{\alpha+1}$ as in (\ref{e12-2}), we obtain that for $\forall\  \zeta\in \mathcal{S}(\delta_{\alpha+1})$, there is a disk $D(\zeta, \delta_{\alpha+1})$ contained in $\mathcal{S}(\delta_\alpha)$. It follows that
	\begin{align*}
		|A^{\frac{\alpha+1}{2}}u(\zeta)| &\leq \frac{2\delta_\alpha}{\pi\delta^2_{\alpha+1}}\int_0^{\sqrt{2}\delta_\alpha}|A^{\frac{\alpha+1}{2}}u(\xi)||_{\xi=t+\rho e^{i\frac{\pi}{4}}}d\rho\\
		&\leq\frac{2\delta_\alpha}{\pi\delta^2_{\alpha+1}}\left(\int_0^{\sqrt{2}\delta_\alpha}|A^{\frac{\alpha+1}{2}}u(\xi)|^2|_{\xi=t+\rho e^{i\frac{\pi}{4}}}d\rho\right)^{\frac{1}{2}}(\sqrt{2}\delta_\alpha)^{\frac{1}{2}}\\
		&\leq \frac{8\sqrt{2}}{\pi}\left(\frac{M_\alpha^2}{\delta_\alpha\nu\ko^2}+4G^2_{\alpha-1}+\sqrt{2}\Gamma_\alpha M^2_\alpha\right)^{\frac{1}{2}}\nu\ko^{\alpha+1}\\
		&= M_{\alpha+1}\nu\ko^{\alpha+1}.
	\end{align*}
	 
\end{proof}
\begin{remark}
Notice that $M_\alpha$ depends only on $g$.
\end{remark}	
\begin{lem}
\label{l124}
If $ \mathcal{A}\cap \mathcal{C}(\sigma) \neq \emptyset $ for some $\sigma >0$, then $g\in C^\infty([0,L]^2)$ and $G_\alpha$ defined in (\ref{e12-4}) satisfies the following estimate
\begin{equation*}
 G_\alpha^2 \leq \gamma_2 e^{\sigma_2\alpha^2}, \, \forall\  \alpha\in\mathbb{N},  \alpha \geq \alpha_1,
\end{equation*}
where  
	\begin{equation*}
		\alpha_1:= \max\{\lfloor \log_4\frac{2}{c_4} \rfloor+1, 4\},
	\end{equation*}
and $\gamma_2, \sigma_2, c_4$ are defined in (\ref{e12-19}), (\ref{e12-20}), (\ref{e12-15}).
 
\begin{proof}
	Let $u_0 \in \mathcal{A}\cap \mathcal{C}(\sigma)$ and $u(t)$ be the solution of the NSE with $u(0)=u_0$.
	By Proposition \ref{p91}, we infer that $g\in C^\infty([0,L]^2)$. Then applying Lemma \ref{l12-2}, we know that $u$ is a $\D(A^{\frac{\alpha}{2}})_\mathbb{C}$-valued analytic function in the strip $\mathcal{S}(\delta_\alpha)$ and satisfies
	\begin{align}
	\label{e12-8}
		|\frac{d}{d\rho} A^{\frac{\alpha}{2}}u(0)|=|\frac{1}{2\pi i}\int_{\partial D(0,\delta_\alpha)}\frac{A^{\frac{\alpha}{2}}u(\xi)}{\xi^2}d\xi| 
		\leq \frac{M_\alpha\nu\ko^\alpha}{\delta_\alpha}.
	\end{align}
	Since $u_0\in \mathcal{C}(\sigma)$, we obtain that there exists $c_0$ such that 
	\begin{align*}
		\frac{|A^{\frac{\alpha}{2}}Au_0|^2}{(\nu\ko^{\alpha+2})^2}\leq c_0e^{\sigma(\alpha+2)^2} \leq c_0 e^{8\sigma} e^{2\sigma\alpha^2}.
	\end{align*}
	That is 
	\begin{align}
	\label{e12-9}
		|A^{\frac{\alpha}{2}}Au_0| \leq c_1e^{\sigma\alpha^2}\nu\ko^{\alpha+2},
	\end{align}
	where 
	\begin{equation*}
		c_1 := \sqrt{c_0}e^{4\sigma}.
	\end{equation*}
	
	Using Lemma \ref{l46}, we have 
	\begin{align}
	\label{e12-11}
		|A^{\frac{\alpha}{2}}B(u_0, u_0)|^2 &\leq 2^{2\alpha}c_A^2\left(|u_0||Au_0||A^{\frac{\alpha+1}{2}}u_0|^2+|A^{\frac{1}{2}}u_0||A^{\frac{3}{2}}u_0||A^{\frac{\alpha}{2}}u_0|^2\right)\\
		&\leq 2^{2\alpha}c_A^2c_0^2\left(e^{\sigma[(\alpha+1)^2+1]}+e^{\sigma(\alpha^2+5)}\right)\nu^4\ko^{2\alpha+4}\nonumber\\
		&\leq 2c_A^2c_0^24^\alpha e^{\sigma[(\alpha+1)^2+1]} \nu^4\ko^{2\alpha+4}\nonumber\\
		&= 2c_A^2c_0^2 e^{2\sigma} e^{\sigma[\alpha^2+\alpha(2+\frac{\ln 4}{\sigma})]} \nu^4\ko^{2\alpha+4}\nonumber\\
		&\leq c_2^2 e^{2\sigma\alpha^2}\nu^4\ko^{2\alpha+4},\nonumber
	\end{align}
	where 
	\begin{equation*}
		c_2 := \sqrt{2}c_Ac_0 e^{\sigma(1+\frac{1}{8}(2+\frac{\ln 4}{\sigma})^2)}.
	\end{equation*}
	From (\ref{e12-8}), (\ref{e12-9}), (\ref{e12-11}), we obtain that
	\begin{align*}
		|A^{\frac{\alpha}{2}}g| &= |\frac{d}{d\rho}A^{\frac{\alpha}{2}}u(0)+\nu A^\frac{\alpha}{2}Au_0+A^\frac{\alpha}{2}B(u_0,u_0)|\\
		&\leq [\frac{M_\alpha}{\delta_\alpha\nu\ko^2}+c_3 e^{\sigma\alpha^2}]\nu^2\ko^{\alpha+2},\nonumber
	\end{align*}
	i.e.
	\begin{equation}
	\label{e12-13}
		G_\alpha \leq \frac{M_\alpha}{\delta_\alpha\nu\ko^2}+c_3 e^{\sigma\alpha^2},
	\end{equation}
	where 
	\begin{equation*}
		 c_3 := c_1 + c_2.
	\end{equation*}

	Applying Lemma \ref{l12-2} and (\ref{e12-13}), we infer that for $\alpha>3$, $M_\alpha$ defined in (\ref{e12-3}) satisfies
	\begin{align*}
		M_{\alpha+1}^2 &\leq \frac{128}{\pi^2}\left(\frac{M^2_\alpha}{\delta_\alpha\nu\ko^2}+4[\frac{M_{\alpha-1}}{\delta_{\alpha-1}\nu\ko^2}+c_3e^{\sigma(\alpha-1)^2}]^2+\sqrt{2}\Gamma_\alpha M_\alpha^2\right)\\
		&\leq \frac{128}{\pi^2}\left(\frac{M^2_\alpha}{\delta_\alpha\nu\ko^2}+8[\frac{M^2_{\alpha-1}}{\delta_{\alpha-1}^2\nu^2\ko^4}+c^2_3e^{2\sigma(\alpha-1)^2}]+\sqrt{2}\Gamma_\alpha M_\alpha^2\right)\\
		&\leq\frac{128M^2_\alpha}{\pi^2}\left(\frac{2^{\alpha-3}}{\delta_3\nu\ko^2}+\frac{2^{2\alpha-3}}{\delta_{3}^2\nu^2\ko^4}+2^{2\alpha+4}c_A^2M_1M_2+2^{\alpha+2}\sqrt{M_1M_3} \right)+\frac{2^8}{\pi^2}c^2_3e^{2\sigma(\alpha-1)^2}\\
		&= c_4 4^{\alpha+1} M^2_\alpha+c_5e^{2\sigma(\alpha-1)^2},\\
	\end{align*}
	where 
	\begin{align}
	\label{e12-15}
		c_4 &=\frac{128}{\pi^2}\left(\frac{1}{\delta_3\nu\ko^2}+\frac{1}{\delta_{3}^2\nu^2\ko^4}+2^2c_A^2M_1M_2+c_A\sqrt{M_1M_3} \right),\quad
		c_5 = \frac{2^8}{\pi^2}c^2_3.
	\end{align}

	It follows that
	\begin{align*} 
		M_{\alpha}^2 &\leq c_4 4^\alpha M^2_{\alpha-1}+c_5e^{2\sigma(\alpha-2)^2}\\
		&\leq c_4 4^\alpha (c_4 4^{(\alpha-1)} M^2_{\alpha-2}+c_5e^{2\sigma(\alpha-3)^2})+c_5e^{2\sigma(\alpha-2)^2}\nonumber\\
		&\leq (c_4 4^\alpha)^2M^2_{\alpha-2}+c_5(1+c_44^\alpha)e^{2\sigma(\alpha-2)^2}\nonumber\\
		&\cdots\nonumber\\
		&\leq (c_44^\alpha)^{\alpha-4}M^2_4+c_5[1+\cdots + (c_44^\alpha)^{\alpha-4}]e^{2\sigma(\alpha-2)^2}\nonumber\\
		&=(c_44^\alpha)^{\alpha-4}M^2_4+c_5\frac{(c_44^\alpha)^{\alpha-3}-1}{(c_44^\alpha)-1}e^{2\sigma(\alpha-2)^2}.\nonumber
	\end{align*}
	
		For $\alpha \geq \alpha_1$ (i.e. $c_44^\alpha\geq 2$) we have 
	\begin{align}
	\label{e1220}
		M_{\alpha}^2 &\leq (c_44^\alpha)^{\alpha-4}M^2_4+2c_5{(c_44^\alpha)^{\alpha-4}}e^{2\sigma(\alpha-2)^2}\\
		&= (M^2_4+2c_5)c_4^{-4}c_4^\alpha 4^{\alpha(\alpha-4)}e^{2\sigma(\alpha-2)^2}\nonumber\\
		&\leq \gamma_1 e^{2\sigma_1\alpha^2},\nonumber
	\end{align}
	where
	\begin{align*} 
		\sigma_1 := \ln 4+ 2\sigma,\quad  \gamma_1 := (M^2_4+2c_5)c_4^{-4} e^{8\sigma}
						e^\frac{\ln c_4-4\ln 4-8\sigma}{4\sigma_1}.	
	\end{align*}
Therefore, by (\ref{e12-13}), we infer that for $\alpha\geq\alpha_1$, $G_\alpha$ satisfies
	\begin{align*}
		G^2_\alpha &\leq [\frac{M_\alpha}{\delta_\alpha\nu\ko^2}+c_3 e^{\sigma\alpha^2}]^2\leq \frac{2M^2_\alpha}{\delta^2_\alpha\nu^2\ko^4}+2c_3^2 e^{2\sigma\alpha^2}\leq \frac{\gamma_1 e^{2\sigma_1\alpha^2}2^{2\alpha-5}}{\delta^2_3\nu^2\ko^4}+2c_3^2 e^{2\sigma\alpha^2}\\
		&\leq \frac{\gamma_1 e^{3\sigma_1\alpha^2}2^{-5}e^{\frac{(\ln 4)^2}{4\sigma_1}}}{\delta^2_3\nu^2\ko^4}+2c_3^2 e^{2\sigma\alpha^2}\leq \gamma_2 e^{\sigma_2\alpha^2},
\end{align*}		 
where
	\begin{align}
	\label{e12-19}
		\gamma_2 := \frac{\gamma_1 2^{-5}e^{\frac{(\ln 4)^2}{4\sigma_1}}}{\delta^2_3\nu^2\ko^4}+2c_3^2,\\
		\label{e12-20}
		 \sigma_2 := \max\{3\sigma_1, 2\sigma\}.
	\end{align}
\end{proof}
\end{lem}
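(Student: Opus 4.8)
The plan is to take a point $u_0\in\mathcal{A}\cap\mathcal{C}(\sigma)$, propagate its regularity through the NSE to obtain $g\in C^\infty$, and then insert the quantitative bounds of Lemma \ref{l12-2} into the equation. Since $\mathcal{C}(\sigma)\subset C^\infty([0,L]^2)\cap H$ by Definition \ref{d81}, we have $\mathcal{A}\cap C^\infty([0,L]^2)\neq\emptyset$, so Theorem \ref{t24} immediately yields $g\in C^\infty([0,L]^2)$. Consequently Lemma \ref{l12-2} applies: the solution $u(\cdot)$ with $u(0)=u_0$ extends to a $\D(A^{\alpha/2})_{\mathbb C}$-valued analytic function on $\mathcal{S}(\delta_\alpha)$ with $|A^{\alpha/2}u(\zeta)|\leq M_\alpha\nu\ko^\alpha$, and for $\alpha\geq 3$ one has $\delta_\alpha=\delta_3/2^{\alpha-3}$ together with the recursion \eqref{e12-3} giving $M_{\alpha+1}^2$ in terms of $M_\alpha^2$, $G_{\alpha-1}^2$ and $\Gamma_\alpha$.

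The core step is to evaluate the complex-time NSE \eqref{comnseq} at $\zeta=0$, giving $g=\frac{du}{d\zeta}(0)+\nu Au_0+B(u_0,u_0)$, then apply $A^{\alpha/2}$ and estimate the three pieces. For the derivative term the Cauchy estimate over $\partial D(0,\delta_\alpha)$ together with the Lemma \ref{l12-2} bound gives $|A^{\alpha/2}\frac{du}{d\zeta}(0)|\leq M_\alpha\nu\ko^\alpha/\delta_\alpha$. For $\nu Au_0$ and $B(u_0,u_0)$ I would use that $u_0\in\mathcal{C}(\sigma)$, i.e. $|A^{j/2}u_0|\leq\sqrt{c_0}\,e^{\sigma j^2/2}\nu\ko^{j}$: the Stokes term gives $|A^{(\alpha+2)/2}u_0|\leq\sqrt{c_0}\,e^{\sigma(\alpha+2)^2/2}\nu\ko^{\alpha+2}\leq c_1 e^{\sigma\alpha^2}\nu\ko^{\alpha+2}$ via the crude inequality $(\alpha+2)^2\leq 2\alpha^2+8$, while Lemma \ref{l46} bounds $|A^{\alpha/2}B(u_0,u_0)|^2$ by $2^{2\alpha}c_A^2$ times a sum of products of the $|A^{j/2}u_0|$; inserting the $\mathcal{C}(\sigma)$ estimates the worst term is of order $4^\alpha e^{\sigma((\alpha+1)^2+1)}$, and absorbing $4^\alpha=e^{\alpha\ln 4}$ into $\sigma\alpha^2$ (at the cost of a dimensionless constant $e^{(\ln 4)^2/(4\sigma)}$) gives $|A^{\alpha/2}B(u_0,u_0)|\leq c_2 e^{\sigma\alpha^2}\nu\ko^{\alpha+2}$. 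Adding the three contributions produces the basic inequality $G_\alpha\leq\frac{M_\alpha}{\delta_\alpha\nu\ko^2}+c_3 e^{\sigma\alpha^2}$ with $c_3:=c_1+c_2$.

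It then remains to control $M_\alpha$ itself. I would substitute the bound just obtained for $G_{\alpha-1}$ into the recursion \eqref{e12-3} and use that $1/\delta_\alpha$ is of order $2^\alpha$ while $\Gamma_\alpha$ is of order $2^{2\alpha}$, reaching a closed recursion of the form $M_{\alpha+1}^2\leq c_4\,4^{\alpha+1}M_\alpha^2+c_5\,e^{2\sigma(\alpha-1)^2}$. Unrolling from $\alpha=4$, the accumulated multiplicative factor is at most $(c_4\,4^\alpha)^{\alpha-4}=c_4^{\alpha-4}e^{(\alpha^2-4\alpha)\ln 4}$, and for $\alpha\geq\alpha_1$ — precisely the range where $c_4\,4^\alpha\geq 2$, so the geometric sum of the inhomogeneities is dominated by twice its last term — one obtains, after absorbing all linear-in-$\alpha$ exponents into the quadratic one, $M_\alpha^2\leq\gamma_1\,e^{2\sigma_1\alpha^2}$ with $\sigma_1:=\ln 4+2\sigma$. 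Feeding this and $1/\delta_\alpha^2$ (of order $4^\alpha$) back into $G_\alpha^2\leq\frac{2M_\alpha^2}{\delta_\alpha^2\nu^2\ko^4}+2c_3^2 e^{2\sigma\alpha^2}$, and once more absorbing a linear exponent into the quadratic, yields $G_\alpha^2\leq\gamma_2 e^{\sigma_2\alpha^2}$ with $\sigma_2=\max\{3\sigma_1,2\sigma\}$, which is the asserted bound.

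I expect the genuine difficulty to be exactly this last piece of bookkeeping: one must verify that the geometric factors $4^\alpha$ entering through $1/\delta_\alpha$ and through $\Gamma_\alpha$, accumulated across the unrolled recursion for $M_\alpha$, inflate the growth only from an $e^{O(\alpha^2)}$ rate with exponent constant proportional to $\sigma$ to one with the larger but still finite constant involving $\ln 4$ — not to anything worse than $\alpha^2$-exponential — and that the threshold $\alpha_1$ is chosen so that the partial geometric sums are controlled by their final term. Everything else is a routine combination of the Cauchy integral estimate, Lemma \ref{l46}, Lemma \ref{l12-2}, and the defining inequality of $\mathcal{C}(\sigma)$.
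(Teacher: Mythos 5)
Your proposal follows the paper's own proof essentially step for step: the same decomposition $g=\frac{du}{d\zeta}(0)+\nu Au_0+B(u_0,u_0)$ with the Cauchy estimate, the $\mathcal{C}(\sigma)$ bound, and Lemma \ref{l46} giving $G_\alpha\leq\frac{M_\alpha}{\delta_\alpha\nu\ko^2}+c_3e^{\sigma\alpha^2}$, followed by the same closed recursion $M_{\alpha+1}^2\leq c_4 4^{\alpha+1}M_\alpha^2+c_5e^{2\sigma(\alpha-1)^2}$, unrolled under the condition $c_44^\alpha\geq2$ and fed back to obtain $G_\alpha^2\leq\gamma_2e^{\sigma_2\alpha^2}$. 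The only cosmetic difference is that you invoke Theorem \ref{t24} rather than Proposition \ref{p91} to get $g\in C^\infty([0,L]^2)$, which amounts to the same thing.
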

\begin{remark}
	By defining 
	\begin{equation*}
		c_0 := \max\{\gamma_2, \frac{|A^{\frac{1}{2}}g|^2}{\nu^4\ko^6}, \cdots, \frac{|A^{\frac{\alpha_1}{2}}g|^2}{\nu^4\ko^{2\alpha_1+4}}\},
	\end{equation*}
we see that $g\in \mathcal{C}(\sigma_2)$.
\end{remark}

Combining the Lemma \ref{l12-2} and Lemma \ref{l124}, we obtain
\begin{lem}
\label{l126}
If $ \mathcal{A}\cap C(\sigma) \neq \emptyset $ for some $\sigma >0$, then for $u(t), t\in \mathbb{R}$, any solution of the NSE in $\mathcal{A}$, we obtain that
	\begin{equation*}
		|A^{\frac{\alpha}{2}}u(t)| \leq M_{\alpha}\nu\ko^\alpha, \,\, \forall\  t \in\mathbb{R},\,\forall\  \alpha\in\mathbb{N}, \alpha>\alpha_1
	\end{equation*}	
	and 
	\begin{equation*}
		M_\alpha^2 \leq \gamma_3 e^{\sigma_3\alpha^2}, 
	\end{equation*}	 
	where $\sigma_3, \gamma_3$ are defined in (\ref{e1231}), (\ref{e1232}).
\end{lem}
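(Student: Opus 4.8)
The plan is to combine the two preceding lemmas, each supplying what the other needs. Since $\mathcal{A}\cap\mathcal{C}(\sigma)\neq\emptyset$, Lemma~\ref{l124} gives $g\in C^\infty([0,L]^2)$ together with the bound $G_\alpha^2\le\gamma_2 e^{\sigma_2\alpha^2}$ for all $\alpha\ge\alpha_1$. With $g\in C^\infty$ now in hand, Lemma~\ref{l12-2} applies: for every $\alpha\in\mathbb{N}$ the solution $u(\zeta)$ extends to a $\D(A^{\alpha/2})_{\mathbb{C}}$-valued analytic function on $\mathcal{S}(\delta_\alpha)$ with $|A^{\alpha/2}u(\zeta)|\le M_\alpha\nu\ko^\alpha$, where for $\alpha\ge3$ one has $\delta_{\alpha+1}=\delta_\alpha/2$ (so $\delta_\alpha=2^{3-\alpha}\delta_3$) and $M_{\alpha+1}$ satisfies the recursion~\eqref{e12-3}. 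Restricting $\zeta$ to the real line, which lies in $\mathcal{S}(\delta_\alpha)$, already yields $|A^{\alpha/2}u(t)|\le M_\alpha\nu\ko^\alpha$ for all $t\in\mathbb{R}$; what remains is to produce the explicit growth bound $M_\alpha^2\le\gamma_3 e^{\sigma_3\alpha^2}$.

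To that end I would convert~\eqref{e12-3} into a scalar recursion for $M_\alpha^2$. Substituting $\delta_\alpha=2^{3-\alpha}\delta_3$, the bound $\Gamma_\alpha\le C_g\,4^\alpha$ which follows from the explicit formula~\eqref{e125} (with $C_g$ depending on $g$ only through the fixed constants $M_1,M_2,M_3$), and $G_{\alpha-1}^2\le\gamma_2 e^{\sigma_2(\alpha-1)^2}$ from Lemma~\ref{l124}, relation~\eqref{e12-3} becomes
\[
M_{\alpha+1}^2\;\le\;\hat c_1\,4^{\alpha+1}M_\alpha^2+\hat c_2\,e^{\sigma_2(\alpha-1)^2},
\]
for constants $\hat c_1,\hat c_2$ depending only on $g$ and $\sigma$. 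This is the exact analogue of the inequality appearing just before~\eqref{e12-15} in the proof of Lemma~\ref{l124}, and I would iterate it downward to the base index $\alpha_1$ precisely as in the chain of estimates culminating in~\eqref{e1220}. The iteration produces a leading factor $\prod_{k}(\hat c_1 4^{k+1})=e^{O(\alpha^2)}$ in front of $M_{\alpha_1}^2$, plus a sum of the $e^{\sigma_2(\cdot)^2}$ terms; since $\alpha\ge\alpha_1$ forces $\hat c_1 4^\alpha\ge2$, this sum is dominated by twice its last summand. Absorbing the finitely many quantities $M_3,\dots,M_{\alpha_1}$ into a constant, one obtains $M_\alpha^2\le\gamma_3 e^{\sigma_3\alpha^2}$ for an explicit exponent $\sigma_3$ (of the order $\max\{\ln 4,\sigma_2\}$, up to a fixed factor) and an explicit constant $\gamma_3$ built from $M_{\alpha_1}$, $\hat c_1$, $\hat c_2$, $\delta_3$ and $\sigma$; these are the quantities to be recorded as~\eqref{e1231} and~\eqref{e1232}.

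The only step that requires care — and it is bookkeeping rather than new ideas — is the iteration of the scalar recursion: one must check that both the accumulated product $4^{O(\alpha^2)}$ and the accumulated exponential sum stay of the form $e^{O(\alpha^2)}$ with constants independent of $\alpha$. Since this computation is identical in structure to the one already performed in~\eqref{e1220} (with $M_{\alpha_1}$ here in the role of $M_4$ there and $\sigma_2$ in the role of $\sigma$), I would simply mirror that argument and read off $\sigma_3$ and $\gamma_3$. No further analytic input — no new nonlinear estimate and no new Cauchy-integral argument — is needed beyond Lemmas~\ref{l12-2} and~\ref{l124}.
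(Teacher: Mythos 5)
Your proposal is correct and follows essentially the same route as the paper: invoke Lemma \ref{l124} to get $g\in C^\infty$ and the bound $G_\alpha^2\le\gamma_2 e^{\sigma_2\alpha^2}$, feed this into Lemma \ref{l12-2} to obtain the recursion \eqref{e12-3} for $M_\alpha$, reduce it to a scalar inequality of the form $M_{\alpha+1}^2\le c\,4^{\alpha+1}M_\alpha^2+c'e^{\sigma_2(\alpha-1)^2}$, and iterate exactly as in \eqref{e1220}. The only (immaterial) difference is bookkeeping: the paper anchors the iteration at $M_4$ and reads off $\sigma_3=2\ln 4+2\sigma_2$, whereas you anchor at $M_{\alpha_1}$ and absorb the finitely many lower-index terms into the constant.
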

 \begin{proof}
From Lemma \ref{l124}, we know that $g\in C^\infty([0,L]^2)$ and has the following estimates
\begin{equation*}
 G^2_\alpha \leq \gamma_2 e^{\sigma_2\alpha^2}, \, \forall\  \alpha \geq \alpha_1,
\end{equation*}

Therefore, applying Lemma \ref{l12-2}, we get that any solution $u(\cdot)$ of the NSE in $\mathcal{A}$ satisfies the following estimates
	\begin{equation*}
		|A^{\frac{\alpha}{2}}u(t)| \leq M_{\alpha}\nu\ko^\alpha, \,\, \forall\  t\in \mathbb{R},\,\forall\  \alpha\in\mathbb{N},
	\end{equation*}
and for $\alpha>\alpha_1$, we have 
	\begin{equation*}
		M_{\alpha+1} := \frac{8\sqrt{2}}{\pi}\left(\frac{M^2_\alpha}{\delta_\alpha\nu\ko^2}+4G^2_{\alpha-1}+\sqrt{2}\Gamma_\alpha M^2_\alpha\right)^{\frac{1}{2}}.
\end{equation*}	
It follows that
	\begin{align*}
		M_{\alpha+1}^2 &\leq \frac{128}{\pi^2}\left(\frac{M^2_\alpha}{\delta_\alpha\nu\ko^2}+4\gamma_2e^{\sigma_2(\alpha-1)^2}+\sqrt{2}\Gamma_\alpha M_\alpha^2\right)\\
		&\leq \frac{128}{\pi^2}\left(\frac{M^2_\alpha}{\delta_\alpha\nu\ko^2}+\sqrt{2}\Gamma_\alpha M_\alpha^2\right)+\frac{2^7}{\pi^2}\gamma_2e^{\sigma_2(\alpha-1)^2}\\
		&= c_6 4^{\alpha+1} M^2_\alpha+c_7e^{\sigma_2(\alpha-1)^2},
	\end{align*}
where 
	\begin{align*} 
		c_6 :=\frac{128}{\pi^2}\left(\frac{1}{\delta_3\nu\ko^2}+2^2c_A^2M_1M_2+c_A\sqrt{M_1M_3} \right),\quad c_7 := \frac{2^7}{\pi^2}\gamma_2.
	\end{align*}
	Then we obtain the analogue of the relation (\ref{e1220})
	\begin{align*}
		M_\alpha^2 &\leq \gamma_3e^{\sigma_3\alpha^2},
	\end{align*}
	where
	\begin{align}
	\label{e1231}
		\sigma_3 &:= 2\ln 4+ 2\sigma_2,\\
		\label{e1232}
		\gamma_3 &:= (M^2_4+2c_7)c_6^{-4} e^{4\sigma_2}
						e^\frac{\ln c_6-4\ln 4-4\sigma_2}{2\sigma_3}.				  
	\end{align}
 \end{proof}

Now, we can pass to the proof of Theorem \ref{t121}.
\begin{proof}[proof of Theorem \ref{t121}]
	Let $u_0\in\mathcal{A}$, and $u(t), t\in\mathbb{R}$, be the solution of the NSE with $u(0)=u_0$.	
	From Lemma \ref{l126}, we obtain that
	\begin{equation*}
		\frac{|A^{\frac{\alpha}{2}}u(t)|^2}{\nu^2\ko^{2\alpha}}\leq M^2_\alpha\leq \gamma_3e^{\sigma_3\alpha^2}, \,\forall\  t\in\mathbb{R}, \, \forall\  \alpha\in\mathbb{N}, \alpha>\alpha_1.
	\end{equation*}	
	By choosing 
	\begin{equation*}
		\gamma_4 := \max\{\gamma_3, \frac{|A^{\frac{1}{2}}u_0|^2}{\nu^2\ko^2}, \cdots, \frac{|A^{\frac{\alpha_1}{2}}u_0|^2}{\nu^2\ko^{2\alpha_1}}\},
	\end{equation*}
	we infer that
	\begin{equation*}
		\frac{|A^{\frac{\alpha}{2}}u(t)|^2}{\nu^2\ko^{2\alpha}}\leq \gamma_4e^{\sigma_3\alpha^2}, \,\forall\  \alpha\in\mathbb{N},
	\end{equation*}
	i.e.
	\begin{equation*}
		u_0 \in \mathcal{C}(\sigma_3).
	\end{equation*}
	
	Since $u_0\in\mathcal{A}$ is arbitrary, the proof is complete.
\end{proof} 
\appendix
\section{}
Let $P_{\kappa}$ denote the orthogonal projection of $H$ onto
\begin{equation*}
\mathrm{span}\{w_{j} : Aw_{j}=\lambda_{j}w_{j},j\leq \kappa\}.
\end{equation*}

Let $g\in H, u_0\in \D(A^{\frac{1}{2}}), t_0\in\mathbb{R}$ and let $u(t)$ for $t\geq t_0$ be the solution of the NSE with ``initial data'' $u(t_0) = u_0$; moreover, for $\kappa\in\mathbb{N}$ we denote by $u_\kappa(t)\in P_\kappa H, t\geq t_0$ the solution of the following ODE
\begin{align}
\label{a1}
	\frac{du_\kappa(t)}{dt} +\nu Au_\kappa(t)+P_\kappa B(u_\kappa(t),  u_\kappa(t)) &= P_\kappa g, \quad t\geq t_0,\\\label{a2}
	u_\kappa(t_0) &= P_\kappa u_0,
\end{align}
usually called a Galerkin approximation of the solution $u(t)$ (e.g. see \cite{T95}, Chapter 2).
Standard ODE theory guarantees the existence of a unique solution $u_\kappa(t)$ for $\{t\in\mathbb{R}: |t-t_0|<\epsilon_0\}$ (where $\epsilon_0 = \epsilon_0(u_0)>0$) and then for all $t \geq t_0$ provided that for every $t_1\in [t_0, \infty)$, $u_\kappa(t)$ exists on $[t_0, t_1)$ and 
\begin{equation}
\label{a3}
	\sup_{t\in [t_0, t_1)} |u_\kappa(t)| < \infty.
\end{equation} 

The validity of the property (\ref{a3}) is obtained in the following way. From (\ref{a1}) we infer
\begin{equation*}
	\frac{1}{2}\frac{d}{dt}|u_\kappa(t)|^2+\nu|A^{\frac{1}{2}}u_\kappa(t)|^2 = (g, u_\kappa(t))\leq|g||u_\kappa(t)|, \quad t\in [t_0, t_1)
\end{equation*}
\begin{equation*}
\frac{1}{2} \frac{d}{dt} |A^{\frac{1}{2}}u_\kappa(t)|^2+\nu |Au_\kappa(t)|^2=(g,Au_\kappa)\leq\frac{|g|^2}{2\nu}+\frac{\nu|Au_\kappa|^2}{2},
\end{equation*}
and then 
\begin{equation*}
	\frac{d}{dt}|u_\kappa(t)|^2+\nu\ko^2|u_\kappa(t)|^2 \leq \frac{|g|^2}{\nu\ko^2}, \quad t\in [t_0, t_1),
\end{equation*}
\begin{equation*}
\frac{d}{dt} |A^{\frac{1}{2}}u_\kappa(t)|^2+\nu |Au_\kappa(t)|^2\leq\frac{|g|^2}{\nu}, 
\end{equation*}
from which it follows that
\begin{align*}
|u_\kappa(t)|^2
&\leq e^{-\nu\ko^2(t-t_0)}|u_0|^2+(1-e^{-\nu\ko^2(t-t_0)}) {\nu}^2G^2,
\end{align*}
\begin{equation}
\label{a5}
|A^{\frac{1}{2}}u_\kappa(t)|^2\leq e^{-\nu\ko^2(t-t_0)}|A^{\frac{1}{2}}u_0|^2+(1-e^{-\nu\ko^2(t-t_0)}) {\nu}^2\ko^2G^2, 
\end{equation}
for $t\geq t_0$.
In addition, it is not hard to prove that $u_\kappa(t)\to u(t)$ in $H$ uniformly for $t\in [t_0, t_1], \forall\  t_1\in\mathbb{R}$. 

The complexified version of  (\ref{a1}) and (\ref{a2}) has the following form
\begin{align} 
\label{a6}
	\frac{dV(\zeta)}{d\zeta} + \nu AV(\zeta)+P_\kappa B(V(\zeta), V(\zeta)) &= P_\kappa g, \\\label{aa7}
	V(t^0) &= P_\kappa V^0,
\end{align}
where $t_0\in\mathbb{R}$, $V(\zeta)\in P_\kappa H_\mathbb{C}$ and $V^0\in\D(A^{\frac{1}{2}})$. 
We will study the initial value problem for this equation.
The classical form of Cauchy's existence theorem (e.g. see \cite{JD73} Chapter 11, Section 5) ensures that the complex differential system (\ref{a6}) has a unique analytic solution $V(\zeta)$ defined in some neighborhood $\{\zeta\in\mathbb{C}: |\zeta-t^0|<\epsilon^0\}$ of $t^0$ satisfying the condition (\ref{aa7}). 

To extend the domain of existence for $V(\zeta)$ we proceed in the following manner (see also \cite{CF89}, \cite{FT79}); from (\ref{a6}) we obtain
\begin{equation*}
	\frac{d}{d\rho}V(t^0+\rho e^{i\theta})+e^{i\theta}[\nu AV(\zeta)+P_\kappa B(V(\zeta), V(\zeta))]|_{\zeta = t^0+\rho e^{i\theta}} = e^{i\theta} P_\kappa g,
\end{equation*}
where it is convenient to take $|\theta|\leq \frac{\pi}{4}$. Then proceeding as in the proof of Lemma \ref{lemma54}, we obtain that if 
\begin{equation*}
	\rho < \min\{\epsilon^0, \rho_1\},
\end{equation*}
where 
\begin{equation}
\label{eea14}
	\rho_1 =\rho_{\max}(G, \frac{|A^{\frac{1}{2}}V^0|}{\nu\ko}):= {\sqrt{2}}\left\{4\cdot 24^3 \cdot c_L^8\left[\frac{2^{\frac{1}{3}}}{24}G^2+\frac{|A^{\frac{1}{2}} V^0|^2}{\nu^2\ko^2}\right]^2\nu\ko^2\right\}^{-1},
\end{equation}
then 
\begin{equation*}
	|A^{\frac{1}{2}} V(\zeta)|^2 \leq \frac{2^{\frac{1}{3}}}{24}G^2(\nu\ko)^2 + \sqrt{2}|A^{\frac{1}{2}} V^0|^2.
\end{equation*}

Therefore if $\rho_1\geq\epsilon^0$, then defining $V(t^0+\epsilon^0 e^{i\theta})$ by
\begin{equation*}
	V(t^0+\epsilon^0 e^{i\theta}) = \lim_{\rho\to\epsilon^{0-}} V(t^0+\rho e^{i\theta})
\end{equation*}
where the limit exists and using again Cauchy's existence theorem, we can extend the analyticity domain of $V(\zeta)$ from $\{\zeta\in\mathbb{C}: |\zeta-t^0|<\epsilon^0\}$ to $\{\zeta\in\mathbb{C}: |\zeta-(t^0+\epsilon^0 e^{i\theta})|<\epsilon_1\}$ where $\epsilon_1>0$. Repeating this process for fixed $\theta$
, we can analytically extend $V(\zeta)$ to an open neighborhood of the segment $\{\zeta = t^0+re^{i\theta}: \epsilon\leq r \leq \rho_1\}$.
Since $\theta\in [-\frac{\pi}{4}, \frac{\pi}{4}]$ is arbitrary, we obtain an analytic extension of $V(\zeta)$ to the whole interior of the sector
\begin{equation}
\label{sector}
	\sector(t^0, \rho_1) := \{\zeta = t^0+\rho e^{i\theta}: |\theta|\leq \frac{\pi}{4}, \rho \in [0, \rho_1)\}
\end{equation}

Thus we obtain the following.
\begin{lem}
\label{la-1}
The solution $V(\zeta)$ of (\ref{a6}) satisfying (\ref{aa7}) exists, is $\D(A^{\frac{1}{2}})_\mathbb{C}$-valued analytic in the sector $\sector(t^0, \rho_1)$ and satisfies
\begin{equation}
\label{a14}
	|A^{\frac{1}{2}} V(\zeta)| \leq M_{1,1}\nu\ko, \quad \forall\  \zeta\in \sector(t^0, \rho_1),
\end{equation}
where 
\begin{equation}
\label{a18}
	M_{1,1} = M_1(G, \frac{|A^{\frac{1}{2}} V^0|}{\nu\ko}):= [\frac{2^{\frac{1}{3}}}{24}G^2 + \sqrt{2}\frac{|A^{\frac{1}{2}} V^0|^2}{(\nu\ko)^2}]^{\frac{1}{2}}.
\end{equation}
does not depend on $\kappa$ and depends only on $G$ and $\frac{|A^{\frac{1}{2}}V^0|}{(\nu\ko)^2}$.
\end{lem}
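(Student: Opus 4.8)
The plan is to repackage the a priori analysis already performed for the real flow as a statement about the Galerkin systems, while tracking that none of the relevant constants depends on $\kappa$. First, note that (\ref{a6})--(\ref{aa7}) is an initial value problem for an ODE on the finite-dimensional space $P_\kappa\hc$ whose right-hand side is a polynomial, hence entire, map; the classical Cauchy existence theorem for analytic ODE therefore yields a unique $P_\kappa\hc$-valued analytic solution $V(\zeta)$ on some disk $\{|\zeta-t^0|<\epsilon^0\}$. Since $P_\kappa\hc\subset\D(A^m)_\mathbb{C}$ for every $m$, this solution is automatically $\D(A^{\frac{1}{2}})_\mathbb{C}$-valued. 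Everything then reduces to producing a $\kappa$-independent a priori bound for $|A^{\frac{1}{2}}V(\zeta)|$ valid on the sector $\sector(t^0,\rho_1)$ of (\ref{sector}).

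Next I would derive that bound exactly as in the proof of Lemma \ref{lemma54}. Fix $\theta\in[-\tfrac\pi4,\tfrac\pi4]$, write $\zeta=t^0+\rho e^{i\theta}$, and take the $\hc$-inner product of (\ref{a6}) with $\ac V(\zeta)$. Because $\ac V(\zeta)\in P_\kappa\hc$, one has $(P_\kappa B(V,V),\ac V)=(B(V,V),\ac V)$, so the projection drops out of the nonlinear term; this inner product need not vanish in the complex setting, which is precisely why one invokes the complexified Lemma \ref{lemma51} (with its extra factors of $2$ in the Ladyzhenskaya and Agmon inequalities). Using $\cos\theta\ge\tfrac{\sqrt2}{2}$, Cauchy--Schwarz for $(g,\ac V)$, Young's inequality with exponents $(2,2)$ and then $(4,4/3)$, and finally the Poincar\'e inequality, one arrives at
\begin{equation*}
\frac{d\phi}{d\rho}\le\gamma+\beta\phi^3,\qquad \phi(\rho):=|A^{\frac{1}{2}}V(t^0+\rho e^{i\theta})|^2,
\end{equation*}
with $\gamma=\sqrt2\,|g|^2/\nu$ and $\beta=24^3 c_L^8/(\sqrt2\,\nu^3\ko^2)$, i.e. the same $\gamma$, $\beta$ as in (\ref{alpha15}); the point is that $P_\kappa g$ enters only through $|P_\kappa g|\le|g|$ and $P_\kappa V^0$ only through $|A^{\frac{1}{2}}P_\kappa V^0|\le|A^{\frac{1}{2}}V^0|$, so both constants are $\kappa$-independent. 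Integrating as in the passage from (\ref{alpha100}) to (\ref{alpha16}), with $\phi(0)=|A^{\frac{1}{2}}V^0|^2$ and $(\gamma/\beta)^{\frac13}$ replaced by its upper bound $\tfrac{2^{1/3}}{24}G^2(\nu\ko)^2$, one finds that as long as $\rho$ stays below the threshold (\ref{e529}) --- which is exactly the value $\rho_1$ of (\ref{eea14}) --- one has $\phi(\rho)\le (\sqrt2-1)(\gamma/\beta)^{\frac13}+\sqrt2\,|A^{\frac{1}{2}}V^0|^2\le M_{1,1}^2(\nu\ko)^2$ with $M_{1,1}$ as in (\ref{a18}).

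Finally, I would use this bound to run the standard continuation argument ray by ray: on a fixed $\kappa$-dimensional space the $|A^{\frac{1}{2}}\cdot|$ norm is equivalent to $|\cdot|$, so the bound precludes blow-up of $V$ as $\rho\uparrow\rho_1$; hence, whenever $\epsilon^0\le\rho_1$ the radial limit at $t^0+\epsilon^0 e^{i\theta}$ exists and Cauchy's theorem can be reapplied there, and iterating pushes the disk of analyticity out along the whole half-open segment $\{t^0+\rho e^{i\theta}:0\le\rho<\rho_1\}$ while preserving the bound (and if $\epsilon^0>\rho_1$ there is nothing to do). Letting $\theta$ range over $[-\tfrac\pi4,\tfrac\pi4]$ sweeps out the interior of $\sector(t^0,\rho_1)$; a routine Morera/analytic-continuation argument shows the radial solutions agree on overlaps and assemble into a single $\D(A^{\frac{1}{2}})_\mathbb{C}$-valued analytic function satisfying (\ref{a14}) there. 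Because $c_L$, $\gamma$, $\beta$, $\rho_1$ and $M_{1,1}$ were obtained without reference to $\kappa$, the conclusion is uniform in $\kappa$, and $\rho_1$, $M_{1,1}$ depend only on $G$ and $|A^{\frac{1}{2}}V^0|/(\nu\ko)$. The main obstacle is not any single estimate but the bookkeeping: making the ray-by-ray continuation and the subsequent gluing into one analytic function on the open sector rigorous, and matching the constants in the complexified differential inequality to those of Lemma \ref{lemma54} carefully enough that the explicit $\rho_1$ of (\ref{eea14}) and $M_{1,1}$ of (\ref{a18}) emerge exactly.
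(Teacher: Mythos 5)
Your proposal is correct and follows essentially the same route as the paper: local existence from Cauchy's theorem for the analytic finite-dimensional ODE, the a priori differential inequality $\frac{d\phi}{d\rho}\le\gamma+\beta\phi^3$ obtained exactly as in Lemma \ref{lemma54} (with $\kappa$-independent constants since $|P_\kappa g|\le|g|$ and $|A^{\frac{1}{2}}P_\kappa V^0|\le|A^{\frac{1}{2}}V^0|$), and then the ray-by-ray continuation via radial limits and reapplication of Cauchy's theorem to fill out the sector $\sector(t^0,\rho_1)$. Your added observations---that the projection drops out of the nonlinear term when paired with $\ac V\in P_\kappa\hc$, and that finite-dimensionality converts the $|A^{\frac{1}{2}}\cdot|$ bound into a no-blow-up statement---are correct details the paper leaves implicit.
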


Clearly, in the case $t^0 \geq t_0, V(t^0) = u_\kappa(t^0)$, the restriction of $V(\zeta)$ to some interval $(t^0, t^1)$ coincides with the Galerkin approximation $u_\kappa(t)$ defined in (\ref{a1}) and (\ref{a2}). Therefore, letting the initial time $t^0$ vary over the whole $[t_0, \infty)$, we obtain the following.
\begin{cor}
\label{c2}
	For all $\kappa\in\mathbb{N}$, $u_\kappa(\zeta)$ is $\D(A^{\frac{1}{2}})_\mathbb{C}$-valued analytic analytic in 
	\begin{equation}
	\label{a19}
	\Pi(t_{0}, \rho_1) := \{\zeta \in\mathbb{C}: \Re(\zeta)\geq t_{0}, \Im(\zeta) \leq \min\{\Re(\zeta), \frac{\rho_1}{\sqrt{2}}\}\}
	\end{equation}
	and the relation 
\begin{equation*}
	|A^{\frac{1}{2}} u_\kappa(\zeta)| \leq [M_{1,1}^2+\sqrt{2}G^2]^{\frac{1}{2}}\nu\ko
\end{equation*}
holds for $\forall\  \zeta\in \Pi(t_{0}, \rho_1)$.
\end{cor}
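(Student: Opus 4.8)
The plan is to produce the $\D(A^{\frac{1}{2}})_\mathbb{C}$-valued analytic extension of $u_\kappa$ by applying Lemma \ref{la-1} from \emph{every} real base time and then patching the resulting local solutions into a single function by analytic continuation; the only genuine work is to make the radius of the sectors and the $A^{\frac{1}{2}}$-bound independent of the base time (and of $\kappa$). Fix $\kappa\in\mathbb{N}$. First I would record a uniform enstrophy estimate for the real Galerkin solution: bounding the two exponential factors in \eqref{a5} by $1$ gives $|A^{\frac{1}{2}}u_\kappa(t^0)|^2\le |A^{\frac{1}{2}}u_0|^2+\nu^2\ko^2G^2$ for every $t^0\ge t_0$. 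For each such $t^0$ I would apply Lemma \ref{la-1} to the complexified Galerkin system \eqref{a6} with $V^0=u_\kappa(t^0)$, obtaining a $\D(A^{\frac{1}{2}})_\mathbb{C}$-valued analytic solution $V_{t^0}$ on the sector $\sector(t^0,\rho(t^0))$ with $\rho(t^0)=\rho_{\max}(G,|A^{\frac{1}{2}}u_\kappa(t^0)|/(\nu\ko))$ and $|A^{\frac{1}{2}}V_{t^0}(\zeta)|\le M_1(G,|A^{\frac{1}{2}}u_\kappa(t^0)|/(\nu\ko))\,\nu\ko$. Since $\rho_{\max}(G,\cdot)$ is non-increasing we get $\rho(t^0)\ge\rho_1$, where $\rho_1$ is the radius \eqref{eea14} attached to the uniform bound above; and since $M_1(G,x)^2=\frac{2^{1/3}}{24}G^2+\sqrt{2}\,x^2$ (see \eqref{a18}), the same uniform bound yields $M_1(G,|A^{\frac{1}{2}}u_\kappa(t^0)|/(\nu\ko))^2\le M_{1,1}^2+\sqrt{2}\,G^2$. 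Hence every $V_{t^0}$ is defined on $\sector(t^0,\rho_1)$, there $|A^{\frac{1}{2}}V_{t^0}(\zeta)|\le[M_{1,1}^2+\sqrt{2}\,G^2]^{\frac{1}{2}}\nu\ko$, and all constants depend only on $G$ and $|A^{\frac{1}{2}}u_0|/(\nu\ko)$, \emph{not} on $\kappa$.

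Second, I would glue the pieces. On the real axis \eqref{a6} reduces to the real Galerkin system \eqref{a1}, so by uniqueness for that ODE the restriction of $V_{t^0}$ to $\{\theta=0\}$ coincides with $u_\kappa(t)$ for $t\in[t^0,t^0+\rho_1)$; in particular, whenever $t_0\le t^0_1<t^0_2<t^0_1+\rho_1$, the functions $V_{t^0_1}$ and $V_{t^0_2}$ agree on a nonempty real interval and hence, being analytic, on the connected overlap of their sectors by the identity theorem. Therefore the family $\{V_{t^0}\}_{t^0\ge t_0}$ patches together to a single well-defined $\D(A^{\frac{1}{2}})_\mathbb{C}$-valued analytic function on $\bigcup_{t^0\ge t_0}\sector(t^0,\rho_1)$ (the union is simply connected, so there is no monodromy), which restricts to the real Galerkin solution on $[t_0,\infty)$. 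This is the sought $u_\kappa(\zeta)$, and it inherits the bound $|A^{\frac{1}{2}}u_\kappa(\zeta)|\le[M_{1,1}^2+\sqrt{2}\,G^2]^{\frac{1}{2}}\nu\ko$.

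Third, I would identify the domain. A point $\zeta=s_1+is_2$ lies in $\bigcup_{t^0\ge t_0}\sector(t^0,\rho_1)$ iff $\zeta-t^0=\rho e^{i\theta}$ for some $t^0\ge t_0$, $|\theta|\le\pi/4$ and $0\le\rho<\rho_1$; taking $t^0=s_1-|s_2|$ and $\theta=\pm\pi/4$ one checks this is possible exactly when $s_1\ge t_0$ and $|s_2|\le\min\{s_1-t_0,\rho_1/\sqrt{2}\}$, i.e.\ $\zeta\in\Pi(t_0,\rho_1)$ as in \eqref{a19}. The main obstacle I anticipate is not any individual estimate --- Lemma \ref{la-1} already carries the local analysis, modeled on the proof of Lemma \ref{lemma54} --- but securing the \emph{uniformity in $t^0$ and in $\kappa$} of both $\rho_1$ and the $A^{\frac{1}{2}}$-bound, which is exactly where the global-in-time enstrophy estimate \eqref{a5} and the monotonicity of $\rho_{\max}(G,\cdot)$ and $M_1(G,\cdot)$ enter. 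A secondary point is verifying that the analytic continuation along the overlapping sectors is single-valued, which is immediate once one observes that $\Pi(t_0,\rho_1)$ is simply connected.
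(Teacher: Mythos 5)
Your proposal is correct and follows essentially the same route as the paper, which disposes of this corollary in two sentences: apply Lemma \ref{la-1} with initial datum $V^0=u_\kappa(t^0)$ for each $t^0\ge t_0$, observe that the restriction of the resulting analytic solution to the real axis coincides with the Galerkin approximation $u_\kappa$, and let $t^0$ sweep over $[t_0,\infty)$. Your write-up simply makes explicit what the paper leaves implicit --- the uniformity in $t^0$ and $\kappa$ obtained from \eqref{a5} together with the monotonicity of $\rho_{\max}(G,\cdot)$ and $M_1(G,\cdot)$, the identity-theorem gluing of the overlapping sectors, and the identification of their union with $\Pi(t_0,\rho_1)$.
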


We conclude our consideration by presenting the justification of the Remark \ref{r52}. For this purpose we need the following 
\begin{lem}
\label{la1}
Let $\alpha\in \mathbb{N}$ and $\mathcal{N}_\alpha$ be a domain containing an interval $(t_0, t_\alpha)\subset \mathbb{R}$. Furthermore, let $u_\kappa(\zeta)$ be the (complex) Galerkin approximation of solution $u(t)$ on $[t_0, +\infty)$ of the NSE satisfying $u(t_0) = u_0$. If each $u_\kappa$ is $P_\kappa H_\mathbb{C}$-valued analytic in $\mathcal{N}_\alpha$ such that
\begin{equation}
\label{a15}
	\sup\{|A^{\frac{\alpha}{2}}u_\kappa(\zeta)|, \zeta\in\mathcal{N}_\alpha\} \leq M_\alpha\nu\ko^\alpha <\infty,
\end{equation}
then $u(\zeta)$ is a $\D(A^{\frac{\alpha}{2}})_\mathbb{C}$-valued analytic function in $\mathcal{N}_\alpha$ and 
   \begin{equation*}
	\sup\{|A^{\frac{\alpha}{2}}u(\zeta)|, \zeta\in\mathcal{N}_\alpha\} \leq M_\alpha\nu\ko^\alpha.
\end{equation*}
\end{lem}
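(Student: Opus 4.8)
The plan is to obtain $u(\zeta)$ as the limit of the Galerkin approximations $u_\kappa(\zeta)$ and to transfer the bound \eqref{a15} to it, the only real work being to verify that the limit is analytic with values in $\D(A^{\frac{\alpha}{2}})_\mathbb{C}$ and not merely in $\hc$. First I would record that $\{u_\kappa\}$ is uniformly bounded in $\hc$ on $\mathcal{N}_\alpha$: since $\lambda_j\geq\lambda_1=\ko^2$ we have the generalized Poincar\'e inequality $\ko^\alpha|v|\leq|A^{\frac{\alpha}{2}}v|$, so \eqref{a15} gives $|u_\kappa(\zeta)|\leq M_\alpha\nu$ for all $\zeta\in\mathcal{N}_\alpha$ and all $\kappa$. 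Thus for every bounded linear functional $\phi$ on $\hc$ the functions $\phi(u_\kappa(\cdot))$ form a locally uniformly bounded sequence of scalar holomorphic functions on $\mathcal{N}_\alpha$; by the uniform convergence $u_\kappa(t)\to u(t)$ in $H$ on compact real intervals (recalled earlier in this appendix) this sequence converges at every point of the real interval $(t_0,t_\alpha)$, which has accumulation points in $\mathcal{N}_\alpha$. Vitali's theorem then gives locally uniform convergence of $\phi(u_\kappa)$ on $\mathcal{N}_\alpha$ to a holomorphic function that coincides with $\phi(u(\cdot))$ on $(t_0,t_\alpha)$. Since $\hc$ is reflexive and $\phi$ is arbitrary, for each $\zeta\in\mathcal{N}_\alpha$ the bounded family $\{u_\kappa(\zeta)\}$ converges weakly in $\hc$ to some vector $u(\zeta)$, and $\zeta\mapsto u(\zeta)$ is weakly holomorphic, hence $\hc$-valued holomorphic, and extends the real solution $u(t)$; this is the desired analytic continuation.

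Next I would bound $|A^{\frac{\alpha}{2}}u(\zeta)|$ by expanding in the eigenbasis. Fix $\zeta\in\mathcal{N}_\alpha$. As $\{w_j\}$ is an orthonormal basis of $\hc$ with $Aw_j=\lambda_jw_j$, weak convergence gives $(u_\kappa(\zeta),w_j)\to(u(\zeta),w_j)$ for each $j$, so for every finite $N$, using \eqref{powerdef} and \eqref{a15},
\begin{equation*}
\sum_{\lambda_j\leq\lambda_N}\lambda_j^{\alpha}|(u(\zeta),w_j)|^2=\lim_{\kappa\to\infty}\sum_{\lambda_j\leq\lambda_N}\lambda_j^{\alpha}|(u_\kappa(\zeta),w_j)|^2\leq M_\alpha^2\nu^2\ko^{2\alpha}.
\end{equation*}
Letting $N\to\infty$ shows $u(\zeta)\in\D(A^{\frac{\alpha}{2}})_\mathbb{C}$ and $|A^{\frac{\alpha}{2}}u(\zeta)|\leq M_\alpha\nu\ko^\alpha$, whence $\sup\{|A^{\frac{\alpha}{2}}u(\zeta)|:\zeta\in\mathcal{N}_\alpha\}\leq M_\alpha\nu\ko^\alpha$, which is the claimed estimate.

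It remains to promote the $\hc$-analyticity of $u$ to $\D(A^{\frac{\alpha}{2}})_\mathbb{C}$-analyticity. Consider the truncations $S_N(\zeta):=\sum_{\lambda_j\leq\lambda_N}\lambda_j^{\alpha/2}(u(\zeta),w_j)w_j$, each of which is $\hc$-valued holomorphic (a finite combination of scalar holomorphic functions and fixed vectors). The estimate of the previous paragraph gives $|S_N(\zeta)|\leq M_\alpha\nu\ko^\alpha$ for all $N$ and $\zeta\in\mathcal{N}_\alpha$, and $S_N(\zeta)\to A^{\frac{\alpha}{2}}u(\zeta)$ in $\hc$ for each $\zeta$ because $u(\zeta)\in\D(A^{\frac{\alpha}{2}})_\mathbb{C}$. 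Applying Vitali's theorem to $\phi(S_N(\cdot))$ for each bounded linear functional $\phi$ shows $\zeta\mapsto A^{\frac{\alpha}{2}}u(\zeta)$ is weakly holomorphic, hence $\hc$-valued holomorphic. Since $\D(A^{\frac{\alpha}{2}})_\mathbb{C}$ carries the graph norm $\big(|v|^2+|A^{\frac{\alpha}{2}}v|^2\big)^{1/2}$ and both $\zeta\mapsto u(\zeta)$ and $\zeta\mapsto A^{\frac{\alpha}{2}}u(\zeta)$ are $\hc$-valued holomorphic, the difference quotients of $u$ converge in each of the two component norms, hence in the graph norm; thus $u$ is a $\D(A^{\frac{\alpha}{2}})_\mathbb{C}$-valued analytic function on $\mathcal{N}_\alpha$, completing the proof.

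The main obstacle is precisely this last step: the Galerkin approximations are controlled only in the norm $|A^{\frac{\alpha}{2}}\cdot|$ and are not known to converge in any strictly higher norm, so one obtains only \emph{weak} convergence of $A^{\frac{\alpha}{2}}u_\kappa(\zeta)$ in $\hc$; the $\D(A^{\frac{\alpha}{2}})_\mathbb{C}$-valued holomorphy of the limit therefore cannot be read off directly and must be extracted via the Fourier-truncation-plus-Vitali argument together with the standard equivalence between weak and strong holomorphy for Banach-space-valued functions.
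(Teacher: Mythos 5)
Your argument is correct and follows essentially the same route as the paper: local uniform boundedness, Vitali's theorem applied to scalar functionals, weak limits (Riesz/reflexivity), transfer of the $|A^{\frac{\alpha}{2}}\cdot|$ bound by duality or Parseval truncation, and the standard equivalence of weak and strong holomorphy. The only substantive difference is that the paper's proof goes on to verify that the limit satisfies the complexified NSE in $\mathcal{N}_\alpha$ (which is what makes the lemma usable in Remark \ref{r52}), whereas you identify the extension with $u(\zeta)$ purely by uniqueness of analytic continuation from the real interval; for the statement as written this is sufficient.
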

\begin{proof}
	Note that $\lim_{\kappa\to\infty}|u_\kappa(t)-u(t)|=0$ for all $t\geq t_0$. Therefore, for any $h\in H_\mathbb{C}$, $(u_\kappa(t), h) \to (u(t), h)$ for every $t\geq  t_0$. Applying Vitali's theorem and using (\ref{a15}) we obtain that $(u_\kappa(\zeta), h)$ is converging to a $\mathbb{C}$-valued analytic function $u_h(\zeta)$. It is easy to show that $u(\zeta, h)$ is antilinear in $h$ and that 
	\begin{equation*}
		|u(\zeta, h)| \leq M_{\alpha} \nu |h|, \quad \forall\  h\in H_\mathbb{C}.
\end{equation*}
Therefore the Riesz-Fr\'{e}chet theorem yields $V(\zeta)\in H_\mathbb{C}$ such that 
\begin{equation*}
	u(\zeta, h) = (V(\zeta), h), \quad \forall\  h\in H_\mathbb{C},  \forall\  \zeta \in \mathcal{N}_\alpha.
\end{equation*}		 

This shows that $V(\zeta)$ is weakly analytic in $\mathcal{N}_\alpha$ and therefore is also strongly analytic (i.e. $V(\zeta)$ is a $H_\mathbb{C}$-valued analytic function)  (e.g. see page 377, 399 \cite{BP77}; page 93 \cite{HP57}). Moreover for $h\in \D(A^{\frac{\alpha}{2}})_\mathbb{C}$ and $\zeta\in \mathcal{N}_\alpha$, we have that 
\begin{align*}
	|(V(\zeta), A^{\frac{\alpha}{2}}h)| = \lim_{\kappa\to\infty}|(u_\kappa(\zeta), A^{\frac{\alpha}{2}}h)|=\lim_{\kappa\to\infty}|(A^{\frac{\alpha}{2}}u_\kappa(\zeta), h)|\leq M_\alpha \nu\ko^\alpha |h|.
\end{align*}
Thus $V(\zeta)\in\D(A^{\frac{\alpha}{2}})_\mathbb{C}$ and $|A^{\frac{\alpha}{2}}V(\zeta)|\leq M_\alpha\nu\ko^\alpha$, since $A^{\frac{\alpha}{2}}$ is self-adjoint. Now 
\begin{equation*}
	(A^{\frac{\alpha}{2}}V(\zeta), h)=(V(\zeta), A^{\frac{\alpha}{2}}h),
\end{equation*}
for every $h\in\D(A^{\frac{\alpha}{2}})_\mathbb{C}$.
Since $\D(A^{\frac{\alpha}{2}})_\mathbb{C}$ is dense in $H_\mathbb{C}$, for any $h^0\in H_\mathbb{C}$ there exists a sequence $\{h^m\}\subset \D(A^{\frac{\alpha}{2}})$ such that $h^m\to h^0$, and hence
\begin{equation}
	|(A^{\frac{\alpha}{2}}V(\zeta), h^0)-(A^{\frac{\alpha}{2}}V(\zeta), h^m)| \leq M_\alpha\nu\ko^\alpha |h^0-h^m|
\end{equation}
which implies that $(A^{\frac{\alpha}{2}}V(\zeta), h^m)\to (A^{\frac{\alpha}{2}}V(\zeta), h^o)$ uniformly in $\zeta\in\mathcal{N}_\alpha$ and therefore $A^{\frac{\alpha}{2}}V(\zeta)$ is also weakly analytic. Hence $A^{\frac{\alpha}{2}}V(\zeta)$ is an $H_\mathbb{C}$-valued analytic function. 

For any $h\in\D(A^{\frac{1}{2}})$,  by using Ladyzhenskaya's inequality and the analyticity of $A^{\frac{\alpha}{2}}V(\zeta)$, we have that as $\xi\to 0$, 
\begin{align*}
	\frac{1}{\xi}((B&(V(\zeta+\xi), V(\zeta+\xi)), h)-(B(V(\zeta), V(\zeta)), h)) \\
	=&\frac{1}{\xi}[(B(V(\zeta+\xi)-V(\zeta), V(\zeta)), h)+(B(V(\zeta), V(\zeta+\xi)-V(\zeta)), h)\\
	&+(B(V(\zeta+\xi)-V(\zeta), V(\zeta+\xi)-V(\zeta)), h)]\\
	=&(B(\frac{V(\zeta+\xi)-V(\zeta)}{\xi}, V(\zeta)), h)+(B(V(\zeta), \frac{V(\zeta+\xi)-V(\zeta)}{\xi}), h)\\
	&+\xi(B(\frac{V(\zeta+\xi)-V(\zeta)}{\xi}, \frac{V(\zeta+\xi)-V(\zeta)}{\xi}), h)\\
	\to& (B(\frac{dV(\zeta)}{d\zeta}, V(\zeta)), h)+(B(V(\zeta), \frac{dV(\zeta)}{d\zeta}), h)\;,
\end{align*}
and hence $(B(V(\zeta), V(\zeta)), h)$ is analytic.

But for $\zeta = t\in(t_0, t_\alpha)$, $V(t) = u(t)$, so the following equation
\begin{equation*}
	(\frac{dV(\zeta)}{d\zeta}, h)+\nu (A^{\frac{1}{2}}V(\zeta), A^{\frac{1}{2}}h)+(B(V(\zeta), V(\zeta)), h) = (g, h), 
\end{equation*}
holds for $t\in(t_0, t_\alpha)$ and hence it holds for $\zeta\in \mathcal{N}_\alpha$ by analyticity. Now it follows that $V(\zeta)$ satisfies the complexified NSE (\ref{comnseq}) in $\mathcal{N}_\alpha$. We obtain, in particular, $V(\zeta) = u(\zeta)$ for all $\zeta\in \mathcal{N}_\alpha$.
\end{proof}

\begin{remark}
\label{ra4}
	It should be clear that, for the solution $u(\zeta)$ of the NSE (\ref{comnseq}) considered in Remark \ref{r52}, the differential inequalities involving $\frac{d}{d\rho}|A^{\frac{\alpha+1}{2}}u(t_0+\rho e^{i\theta})|^2$ used to establish estimates independent of $t_0, \rho$ and $\theta$ for $|A^{\frac{\alpha+1}{2}}u(\zeta)|$, are ``strikingly similar'' to those involving $\frac{d}{d\rho}|A^{\frac{\alpha+1}{2}}u_\kappa (t_0+\rho e^{i\theta})|^2$, where $u_\kappa(\zeta)$ is the solution of (\ref{a6}) satisfying (\ref{a2}). These estimates and the classical form of Cauchy's existence theorem imply the existence of $u_\kappa(\zeta)$ as well as the estimates given in the Proposition \ref{p10} and \ref{p11}. Thus, due to the considerations already made in this Appendix, the latter estimates imply both the existence of $u(\zeta)$ and the estimates of $|A^{\frac{\alpha+1}{2}}u(\zeta)|$ obtained by using the short procedure given in Remark \ref{r52}. Because of this fact, the latter estimates are usually referred as ``a priori estimates'' (e.g. see \cite{T95}, \cite{CF89}).
\end{remark}

\bibliographystyle{amsxport}
\bibliography{./ref}

\end{document}